\theoremstyle{plain}
\newaliascnt{theorem}{dummy}
\newtheorem{theorem}[theorem]{Theorem}
\newaliascnt{proposition}{dummy}
\newtheorem{proposition}[proposition]{Proposition}
\newaliascnt{corollary}{dummy}
\newtheorem{corollary}[corollary]{Corollary}
\newaliascnt{lemma}{dummy}
\newtheorem{lemma}[lemma]{Lemma}
\newaliascnt{conjecture}{dummy}
\theoremstyle{definition}
\newaliascnt{definition}{dummy}
\newtheorem{definition}[definition]{Definition}
\newaliascnt{example}{dummy}
\theoremstyle{definition}
\newaliascnt{setting}{dummy}
\newtheorem{setting}[setting]{Setting}
\newaliascnt{remark}{dummy}
\newtheorem{remark}[remark]{Remark}
\numberwithin{equation}{section} 
\newcommand{\calF}{\mathcal{F}}
\newcommand{\calP}{\mathcal{P}}
\newcommand{\calD}{\mathcal{D}}
\newcommand{\calL}{\mathcal{L}}
\newcommand{\scrB}{\mathscr{B}}
\newcommand{\bbR}{\mathbb{R}}
\newcommand{\bbZ}{\mathbb{Z}}
\newcommand{\bbN}{\mathbb{N}}
\newcommand{\bbP}{\mathbb{P}}
\DeclareMathOperator*{\intersect}{\cap}
\DeclareMathOperator*{\Union}{\bigcup}
\providecommand{\abs}[1]{\lvert#1\rvert}
\providecommand{\Abs}[1]{\left\lvert#1\right\rvert}
\providecommand{\norm}[2][]{\lVert#2\rVert_{#1}}
\newcommand{\euclid}[1][d]{\mathbb{R}^{#1}}
\newcommand{\dimH}{\dim_{\mathrm{H}}}
\newcommand{\uDim}[1]{\overline{\dim}_{\mathrm{#1}}}
\newcommand{\lDim}[1]{\underline{\dim}_{\mathrm{#1}}}
\newcommand{\kerZ}[2]{Z_{#1}(#2)}
\newcommand{\kerTheta}[3][s]{J_{\theta,#2}^{#1}(#3)}
\newcommand{\capTheta}[3][s]{C_{\theta,#2}^{#1}(#3)}
\newcommand{\sumTheta}[3][s]{S_{\theta,#2}^{#1}(#3)}
\newcommand{\kerPhi}[3][s]{J_{\Phi,#2}^{#1}(#3)}
\newcommand{\capPhi}[3][s]{C_{\Phi,#2}^{#1}(#3)}
\newcommand{\sumPhi}[3][s]{S_{\Phi,#2}^{#1}(#3)}
\newcommand{\kerProf}[4]{J_{\Phi,#1}^{#2,#3}(#4)}
\newcommand{\capProf}[4]{C_{\Phi,#1}^{#2,#3}(#4)}
\newcommand{\capProfAlpha}[5][\alpha]{C_{\Phi_{#1}, #2^{1/#1}}^{#1 #3, #1 #4}(#5)}
\newcommand{\kerProfAlpha}[5][\alpha]{J_{\Phi_{#1}, #2^{1/#1}}^{#1 #3, #1 #4}(#5)}
\newcommand{\sumProfile}[3]{S_{\Phi,#1}^{#2}(#3)}
\newcommand{\kerAlphaPhi}[4]{J_{\Phi_{\alpha},#1}^{#2,#3}(#4)}
\newcommand{\capAlphaPhi}[4]{C_{\Phi_{\alpha},#1}^{#2,#3}(#4)}
\newcommand{\kerBetaPhiProf}[5][\beta]{J_{\Phi_{#1}, #2}^{#3,#4}(#5)}
\newcommand{\kerPsiPhi}[3]{\psi_{\Phi,#1}^{#2}(#3)}
\newcommand{\kerPsiTheta}[3]{\psi_{\theta,#1}^{#2}(#3)}
\newcommand{\kerGeoCount}[3]{\phi_{#1}^{#2}(#3)}
\newcommand{\xWy}{x\wedge y}
\newcommand{\xyAbs}{\abs{x-y}}
\newcommand{\ol}[1]{\overline{#1}}
\newcommand{\bfa}{\mathbf{a}}
\newcommand{\pia}{\pi^{\mathbf{a}}}
\newcommand{\DeltaPiXY}{\abs{\pia(x)-\pia(y)}}
\newcommand{\piV}{P_{V}}
\newcommand{\xyPiV}{\abs{\piV x - \piV y}}
\newcommand{\Balph}[1]{B_{\alpha}(#1)}
\newcommand{\xyBalpha}{\abs{B_\alpha(x) - B_\alpha(y)}}
\newcommand{\tPhiDim}[1][\Phi]{$ #1 $\nobreakdash-intermediate dimension}
\newcommand{\tThetaDim}[1][\theta]{$ #1 $\nobreakdash-intermediate dimension}
\newcommand{\tPhiDims}[1][\Phi]{$ #1 $\nobreakdash-intermediate dimensions}
\newcommand{\tThetaDims}[1][\theta]{$ #1 $\nobreakdash-intermediate dimensions}
\newcommand{\mFor}{\quad\text{for }}
\newcommand{\mAnd}{\quad\text{ and }\quad}
\title{Intermediate dimensions under self-affine codings}
\author{Zhou Feng}
\address{Department of Mathematics\\
	The Chinese University of Hong Kong\\
	Shatin,  Hong Kong}
\curraddr{}
\email{\href{mailto: zfeng@math.cuhk.edu.hk}{zfeng@math.cuhk.edu.hk}}
\thanks{}
\subjclass[2010]{28A80, 37C45, 31B15, 49Q15, 60B05}
\keywords{Intermediate dimensions, self-affine sets, projections, capacity}
\date{}
\dedicatory{}
\thanks{}
\begin{document}
\begin{abstract}
	Intermediate dimensions were recently introduced by Falconer, Fraser, and Kempton [Math.~Z.~296~(2020)] to interpolate between the Hausdorff and box-counting dimensions. In this paper, we show that for every subset $ E $ of the symbolic space, the intermediate dimensions of the projections of $ E $ under typical self\nobreakdash-affine coding maps are constant and given by formulas in terms of capacities. Moreover, we extend the results to the generalized intermediate dimensions in several settings, including the orthogonal projections in Euclidean spaces and the images of fractional Brownian motions.
\end{abstract}

\maketitle

\section{Introduction}

The study on the dimensions of projections of sets has a long history. For a survey of this topic, please refer to \cite{FalconerEtAl2015}. In this paper, we focus on the intermediate dimensions of projections of sets under the coding maps associated with typical affine iterated function systems.

In what follows, we fix a family of $ d\times d $ invertible real matrices $ T_{1},\ldots, T_{m} $ with $ \norm{T_{j}} < 1 $ for $ 1 \leq j \leq m $. Let $ \mathbf{a} = (a_{1},\ldots, a_{m}) \in \euclid[dm] $. By an \textit{affine iterated function system} (affine IFS) we mean a finite family $ \calF^{\bfa} = \{ f_{j}^{\bfa}\}_{j=1}^{m} $ of affine maps taking the form
\begin{equation*}
	f_{j}^{\bfa}(x) = T_{j} x + a_{j} \mFor 1 \leq j \leq m.
\end{equation*}
Here we write $ f_{j}^{\mathbf{a}} $ instead of $ f_{j} $ to emphasize its dependence on $ \bfa $. It is well known \cite{Hutchinson1981} that there exists a unique non-empty compact set $ K^{\bfa} $ such that
\begin{equation*}
	K^{\bfa} = \Union_{j=1}^{m} f_{j}^{\bfa}(K^{\bfa}).
\end{equation*}
We call $ K^{\bfa} $ the \textit{self-affine set} generated by $ \calF^{\bfa} $. Write $ \Sigma := \{1, \ldots, m\}^{\bbN} $. The (self-affine) \textit{coding map} $ \pia \colon \Sigma \to \euclid $ associated with $ \calF^{\bfa} $ is
\begin{equation}\label{eq:def-coding}
	\pi^{\mathbf{a}}(\mathbf{i}) := \lim_{n\to\infty} f_{i_{1}}^{\mathbf{a}}\circ \cdots \circ f_{i_{n}}^{\mathbf{a}} (0) \quad \text{for } \mathbf{i} = i_{1}\ldots i_{n} \ldots \in \Sigma.
\end{equation}
It is well known \cite{Hutchinson1981} that $ K^{\bfa} = \pia(\Sigma) $.

There is an amount of work studying various dimensional properties of projected sets and measures under typical coding maps \cite{Falconer1988,Solomyak1998,Kaeenmaeki2004,JordanEtAl2007,KaeenmaekiVilppolainen2010,JaervenpaeaeEtAl2014,FengEtAl2022}. Let $ \calL^{d} $ denote the Lebesgue measure on $ \euclid[d] $. In his seminal paper~\cite{Falconer1988}, Falconer showed that the Hausdorff and box-counting dimensions of self-affine sets $ K^{\bfa} = \pia(\Sigma) $ remain as a common constant for $ \calL^{dm} $-a.e.~$ \bfa $ provided that $ \norm{T_{j}} < 1/3 $ for all $ j $. The upper bound in this norm condition was later relaxed to $ 1/2 $ by Solomyak~\cite{Solomyak1998}. Assuming $ \norm{T_{j}} < 1/2 $ for all $ j $, very recently Feng, Lo, and Ma~\cite{FengEtAl2022} showed that for every Borel set $ E \subset \Sigma $, each of the Hausdorff, packing, upper, and lower box-counting dimensions of $ \pia(E) $ is constant for $ \calL^{dm} $-a.e.~$ \bfa $. In this paper, letting $ E \subset \Sigma $, we obtain an analogous constancy result about the intermediate dimensions of $ \pia(E) $ for $ \calL^{dm} $-a.e.\ $ \bfa $.

Intermediate dimensions were introduced by Falconer, Fraser, and Kempton~\cite{FalconerEtAl2020} to interpolate between the Hausdorff and box-counting dimensions, see \cite{Falconer2021a} for a survey. Despite their extremely recent introduction, the intermediate dimensions have already seen interesting applications, for example \cite[Section 6]{BurrellEtAl2021} and \cite{BanajiKolossvary2021}. To avoid problems of definition, throughout the paper we assume that all the sets, whose dimensions are considered, are non-empty and bounded. Denote the diameter of a set $ U \subset \euclid $ by $ \abs{U} $.

\begin{definition}\label{def:thetaDimByCover}
	Let $ F \subset \euclid $. For $ 0 \leq
		\theta \leq 1 $, the \textit{upper \tThetaDim} of $ F $ is defined by
	\begin{align*}
		\uDim{\theta} & F = \inf \{ s \geq 0\colon \text{ for all } \varepsilon > 0, \text{ there exists } r_{0} \in (0, 1] \text{ such that for all } r \in (0, r_{0}),                                            \\
		              & \text{there exists a cover } \{U_{i} \} \text{ of } F \text{ such that }  r^{1/\theta} \leq \abs{U_{i}} \leq r \text{ for all } i \text{ and } \sum_{i} \abs{U_{i}}^{s} \leq \varepsilon \}
	\end{align*}
	and the \textit{lower \tThetaDim} of $ F $ is defined by
	\begin{align*}
		\lDim{\theta} F = \inf \{ & s \geq 0\colon \text{ for all } \varepsilon > 0  \text{ and } r_{0} \in (0, 1], \text{ there exists } r \in (0, r_{0}) \text{  and }                                             \\
		                          & \text{ a cover } \{U_{i} \} \text{ of } F \text{ such that }  r^{1/\theta} \leq \abs{U_{i}} \leq r \text{ for all } i \text{ and } \sum_{i} \abs{U_{i}}^{s} \leq \varepsilon \}.
	\end{align*}
\end{definition}
It is immediate that the Hausdorff dimension $ \dimH F $, the upper box-counting dimension $ \uDim{B} F$, and the lower box-counting dimension $ \lDim{B} F $ are the extreme cases of the \tThetaDims. Specifically,
\begin{equation*}
	\dimH F = \lDim{0} F = \uDim{0} F,\quad \uDim{B} = \uDim{1} F, \quad \text{and}\quad \lDim{B} F = \lDim{1} F.
\end{equation*}

Below we state our first main result on the \tThetaDims\ of $ \pia(E) $ for $ E \subset \Sigma $ in terms of the \textit{capacity dimensions} $ \lDim{C,\theta}E$, $ \uDim{C,\theta}E $ whose rigorous definitions are given in \autoref{def:theta-CapDim}.

\begin{theorem}\label{thm:main}
	Let $ 0 < \theta \leq 1 $ and $ E \subset \Sigma $. Then the followings hold.
	\begin{enumerate}[(i)]
		\item \label{itm:UB} For all $ \mathbf{a} \in \euclid[md] $,
		      \begin{equation*}
			      \lDim{\theta} \pia (E) \leq \lDim{C,\theta} E \quad\text{  and }\quad  \uDim{\theta} \pia (E) \leq \uDim{C,\theta} E .
		      \end{equation*}
		\item \label{itm:LB} Assume $ \norm{T_{j}} < 1/2 $ for $ 1 \leq j \leq m $. Then for $ \calL^{dm} $-a.e.\ $ \mathbf{a} \in \euclid[dm] $,
		      \begin{equation*}
			      \lDim{\theta} \pia (E) = \lDim{C,\theta} E \quad\text{  and }\quad  \uDim{\theta} \pia (E) = \uDim{C,\theta} E.
		      \end{equation*}
	\end{enumerate}
\end{theorem}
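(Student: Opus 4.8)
The plan is to establish both parts by carefully transferring covers between the symbolic space $\Sigma$ and the Euclidean target $\euclid$, using the geometry of the cylinder sets under $\pia$.

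For part (i), the upper bounds, I would first work out how the coding map distorts cylinders. For a finite word $\mathbf{i} = i_1 \ldots i_n$, the image $\pia([\mathbf{i}])$ of the cylinder is contained in a translate of $T_{i_1}\cdots T_{i_n}(K^{\bfa})$, hence has diameter comparable to $\|T_{i_1}\cdots T_{i_n}\| \cdot \diam(K^{\bfa})$ (up to a factor depending only on $\bfa$ through $\diam K^{\bfa}$). So given an economical cover of $E$ by cylinders whose "symbolic sizes" (measured by the singular-value functions or whatever metric underlies the capacity dimension in \autoref{def:theta-CapDim}) witness a value close to $\uDim{C,\theta}E$, pushing forward produces a cover of $\pia(E)$ by sets whose diameters lie in the required window $[r^{1/\theta}, r]$ after a bounded rescaling of $r$, and whose $s$-sums are controlled. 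The only care needed is that the $\theta$-constraint $r^{1/\theta} \le |U_i| \le r$ is stable under the bounded multiplicative distortion and under passing from the symbolic "size" to the Euclidean diameter; since $\pia$ is defined for every $\bfa$ and the norms $\|T_j\|$ are fixed, this holds for all $\bfa \in \euclid[md]$, which is exactly the claim. The lower $\theta$-dimension case is the same argument with the quantifiers on $r$ rearranged.

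For part (ii), the lower bounds, this is where the measure-theoretic transversality enters and where the bulk of the work lies. The strategy is the standard potential-theoretic / energy method adapted to intermediate dimensions: to bound $\uDim{\theta}\pia(E)$ from below by $s$, one wants, for a given $s' < s \le \uDim{C,\theta}E$, to find for all small $r$ a measure supported near $E$ whose pushforward under $\pia$ has finite "$(\theta,s')$-capacity-type energy" with controlled bound, so that no cover of $\pia(E)$ with diameters in $[r^{1/\theta},r]$ can have small $s'$-sum. The key estimate is the transversality bound from the Falconer-type machinery (available here because $\|T_j\| < 1/2$, as in \cite{Solomyak1998,FengEtAl2022}): for fixed $\mathbf{i} \ne \mathbf{j} \in \Sigma$,
\begin{equation*}
	\int_{\euclid[dm]} \frac{d\bfa}{|\pia(\mathbf{i}) - \pia(\mathbf{j})|^{s}} \;\lesssim\; \frac{1}{(\text{symbolic distance of } \mathbf{i}, \mathbf{j})^{s}}
\end{equation*}
for $s$ below the ambient dimension, with a constant uniform in $\mathbf{i},\mathbf{j}$. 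Integrating the relevant capacity/energy functional over $\bfa$ and applying Fubini, the finiteness of the symbolic-side capacity appearing in $\uDim{C,\theta}E$ forces the Euclidean-side energy to be finite for $\calL^{dm}$-a.e.\ $\bfa$; a Frostman-type argument then converts this into the cover lower bound. Since the window constraint $r^{1/\theta} \le |U_i| \le r$ is built into the capacity in \autoref{def:theta-CapDim}, one must run this energy argument at each scale $r$ with the truncated/restricted kernels that respect the window, and then take a countable dense set of thresholds $s'$ and a countable set of scales to pass the "a.e." through; combined with part (i) this pins down the exact value for a.e.\ $\bfa$.

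The main obstacle will be part (ii): specifically, making the energy/capacity argument interact correctly with the \emph{two-sided} diameter constraint that defines the intermediate dimensions. Unlike the Hausdorff case (where one uses an unrestricted Riesz kernel) or the box-counting case (where one counts at a single scale), here the kernel must be adapted to the annulus $[r^{1/\theta}, r]$ and one must verify that the transversality bound survives this truncation with constants independent of $r$, and that the "$\inf$ over $s$ such that for all $\varepsilon$ there exists $r_0$..." structure of \autoref{def:thetaDimByCover} can be matched by a countable-intersection argument so that a single null set works simultaneously for all $\varepsilon$ and all $r$. I expect this bookkeeping, together with checking that $\uDim{C,\theta}$ and $\lDim{C,\theta}$ as defined in \autoref{def:theta-CapDim} are precisely the quantities that come out of the energy integral, to be the technical heart of the proof; the transversality input itself is by now standard under $\|T_j\|<1/2$.
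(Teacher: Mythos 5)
Your sketch of part (ii) is broadly aligned with what the paper does: transversality plus Fubini plus a mass-distribution principle adapted to the two-sided diameter window. Your sketch of part (i), however, has a genuine gap in the mechanism. The capacity dimension $ \uDim{C,\theta} E $ is not a covering-type quantity on $ \Sigma $; it is defined through the capacity $ \capTheta{r}{E} = \bigl( \inf_{\mu\in\calP(E)} \iint \kerTheta{r}{\xWy}\,d\mu\,d\mu \bigr)^{-1} $, i.e.\ an infimum of energy integrals over probability measures. There is no ``economical cover of $ E $ by cylinders whose symbolic sizes witness a value close to $ \uDim{C,\theta} E $'' to push forward, so the step ``pushing forward produces a cover of $ \pia(E) $'' is unfounded as stated. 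The paper bridges capacity and covers through the equilibrium measure of \autoref{lem:equil-meas}: its potential $ \int \kerTheta{r}{\xWy}\,d\mu_0(y) $ is bounded below by $ (\capTheta{r}{E})^{-1} $ for \emph{every} $ x\in E $, and then a pigeonhole/telescoping argument in \autoref{lem:thetaDim-CoverSumUB} selects, for each $ x $, a cylinder depth $ n(x) $ and a scale $ \delta(x)\in[r^{1/\theta},r] $ where the measure-to-counting ratio is favorable; a disjoint subcover is then extracted from the net structure of $ \Sigma $. That extraction is the missing idea in your (i).

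A second, related gap: you describe $ \pia([I]) $ as having a single size comparable to $ \norm{T_I}\cdot\diam K^{\bfa} $. But $ \pia([I]) $ is essentially an ellipsoid with semi-axes proportional to all the singular values $ \alpha_1(T_I),\dots,\alpha_d(T_I) $, and its covering number at scale $ u $ is $ \approx 1/\kerZ{u}{I} = \prod_k \max\{1,\alpha_k(T_I)/u\} $ (\autoref{lem:kerZ-counting}). Working with the diameter alone produces the wrong $ s $-sum; the kernel $ \kerZ{u}{\cdot} $ — and hence the $ \max_{r^{1/\theta}\le u\le r} u^{-s}\kerZ{u}{\cdot} $ structure of $ \kerTheta{r}{\cdot} $ — is exactly calibrated to this anisotropic chopping. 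For part (ii), your stated transversality estimate is also off-target: what is used is the measure bound $ \calL^{dm}\{\bfa\in B_\rho : \DeltaPiXY \le r\}\lesssim \kerZ{r}{\xWy} $ (\autoref{lem:paramMeasure}), which, once integrated against the truncated kernel $ \kerPsiTheta{r}{s}{\cdot} $, yields the $ \log(1/r) $-weighted comparison with $ \kerTheta{r}{\xWy} $ of \autoref{lem:thetaDim-paraMeasure}; your Riesz-kernel formulation $ \int |\pia(\mathbf{i})-\pia(\mathbf{j})|^{-s}\,d\bfa $ is the unrestricted Hausdorff-dimension version and is not the estimate needed here. With these corrections, the countable-sequence-of-scales bookkeeping and the mass-distribution principle (\autoref{lem:thetaDim-CoverSumLB}) that you anticipate do carry the argument home.
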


\autoref{thm:main} is proved through a capacity approach by adapting and extending some ideas in \cite{Falconer2021,BurrellEtAl2021,FengEtAl2022}. Our definitions of kernels are inspired by, but different from that of Burrell, Falconer, and Fraser~\cite{BurrellEtAl2021} where the projection theorems are established for the \tThetaDims\ under the orthogonal projections in Euclidean spaces. It is these new kernels that reveal a unified computational scheme and pave the way for the extensions to the generalized intermediate dimensions.

In \cite{Banaji2020}, Banaji generalized the \tThetaDims\ to the so-called \textit{\tPhiDims} $ \lDim{\Phi} F $, $ \uDim{\Phi} F $ (see \autoref{def:PhiDim}) by replacing the size condition $ r^{1/\theta} \leq \abs{U_{i}} \leq r $ in \autoref{def:thetaDimByCover} with $ \Phi(r) \leq \abs{U_{i}} \leq r $, where $ \Phi $ is an admissible function. Here a function $ \Phi $ is called \textit{admissible} if there exists some $ Y > 0 $ such that $ \Phi $ is monotonic on $ (0, Y) $, and satisfies $ 0 < \Phi(r) \leq r $ for $ 0 < r < Y $ and $ \lim_{r\to 0} \Phi(r) / r = 0 $. In particular, we get the \tThetaDims\ when $ \Phi(r) = r^{1/\theta}$ $(0 < \theta < 1) $ and the box-counting dimensions when $ \Phi(r) = - r/\log r $ (see \cite[Proposition 3.2]{Banaji2020}).

It is natural to ask whether there are some results analogous to \autoref{thm:main} for the \tPhiDims. Our answer is affirmative. Moreover, our strategy can be exploited to study the \tPhiDims\ in several settings, including the orthogonal projections in Euclidean spaces and the images of fractional Brownian motions. For the clarity of illustration, we separately state the settings where we study the \tPhiDims.

\begin{setting}
	\label{set:selfaffine} Let $ T_{1}, \ldots,  T_{m} $ be a fixed family of contracting $ d\times d $ invertible real matrices. Write $ \Sigma = \{1,\ldots,m\}^{\bbN} $. For $ \bfa = (a_{1}, \ldots, a_{m}) \in \euclid[dm] $, let $ \pia\colon \Sigma \to \euclid $ be the coding map associated with the affine IFS $ \{ T_{j}x+a_{j}\}_{j=1}^{m} $ (see \eqref{eq:def-coding}).
\end{setting}

\begin{setting}
	\label{set:orthogon} Let $ G(d, m) $ be the Grassmannian of $ m $-dimensional subspaces of $ \euclid[d] $ and $ \gamma_{d,m} $ be the natural invariant probability measure on $ G(d,m) $. For $ V \in G(d,m) $, let $ \piV $ be the orthogonal projection from $ \euclid $ onto $ V $.
\end{setting}

\begin{setting}
	\label{set:brownian} For $ 0 < \alpha < 1 $, the \textit{index-$ \alpha $ fractional Brownian motion} is the Gaussian random function $ B_{\alpha} \colon \euclid \to \euclid[m] $ that with probability $ 1 $ is continuous with $ \Balph{0} = 0 $ and such that the increments $ \Balph{x} - \Balph{y} $ are multivariate normal with the mean vector $ 0 \in \euclid[m] $ and the covariance matrix $ \mathrm{diag}(\xyAbs^{2\alpha}, \ldots, \xyAbs^{2\alpha}) \in \euclid[m \times m]$. Denote the underlying probability space as $ (\Omega, \bbP) $. In particular, $ B_{\alpha} = (B_{\alpha, 1}, \ldots, B_{\alpha,m}) $, where $ B_{\alpha, i} \colon \euclid \to \bbR $ are independent index-$ \alpha $ fractional Brownian motions with distributions given by
	\begin{equation}\label{eq:FBM-dist}
		\bbP \{ B_{\alpha, i}(x) - B_{\alpha, i}(y) \in A \} = \frac{1}{\sqrt{2\pi} \xyAbs^{\alpha}} \int_{t\in A} \exp \left(- \frac{t^{2}}{2\xyAbs^{2\alpha} }\right) \, dt
	\end{equation}
	for each Borel set $ A \subset \bbR $.
\end{setting}

Now we are ready to present our results for the \tPhiDims\ using the \textit{generalized capacity dimensions} $ \lDim{C,\Phi} E$, $\uDim{C,\Phi} E$ (see \autoref{def:CapPhiDim}) and \textit{generalized dimension profiles} $ \lDim{\Phi}^{\tau} E$, $\lDim{\Phi}^{\tau} E $ (see \autoref{def:DimProfile}). 
\begin{theorem}\label{thm:PhiDim} Let $ \Phi $ be an admissible function.
	Suppose
	\begin{equation}\label{eq:GenThm-Cond}
		\lim_{r \to 0} r^{\varepsilon} \log \Phi(r) = 0 \mFor \text{all } \varepsilon > 0.
	\end{equation}
 	Then the followings hold.
	\begin{enumerate}[(i)]
		\item\label{itm:MT-affine} In \autoref{set:selfaffine}, let $ E \subset \Sigma $.
		     Then for all $ \bfa \in \euclid[dm] $,
		      \begin{equation*}
			      \lDim{\Phi} \pia(E) \leq \lDim{C,\Phi} E \quad\text{  and }\quad  \uDim{\Phi} \pia (E) \leq \uDim{C,\Phi} E.
		      \end{equation*}
		      Assume $ \norm{T_{j}} < 1/2 $ for $ 1 \leq j \leq m$. Then for $ \calL^{dm} $-a.e.\ $ \bfa \in \euclid[dm] $,
		      \begin{equation*}
			      \lDim{\Phi} \pia (E) = \lDim{C,\Phi} E \quad\text{  and }\quad  \uDim{\Phi} \pia (E) = \uDim{C,\Phi} E.
		      \end{equation*}

		\item\label{itm:MT-ortho} In \autoref{set:orthogon}, let $ E \subset \euclid $. Then for all $ V \subset G(d, m) $,
		      \begin{equation}\label{eq:GenThm-ortho-UB}
			      \lDim{\Phi} \piV E \leq \lDim{\Phi}^{m} E \quad\text{  and }\quad  \uDim{\Phi} \piV E \leq \uDim{\Phi}^{m} E.
		      \end{equation}
		      Moreover, for $ \gamma_{d,m} $-a.e.\ $ V\in G(d,m) $,
		      \begin{equation*}
			      \lDim{\Phi} \piV E = \lDim{\Phi}^{m} E \quad\text{  and }\quad  \uDim{\Phi} \piV E = \uDim{\Phi}^{m} E.
		      \end{equation*}

		\item\label{itm:MT-brown} In \autoref{set:brownian}, let $ E \subset \euclid[d] $. Then almost surely,
		      \begin{equation}\label{eq:GenThm-brown}
			      \lDim{\Phi} B_{\alpha}(E) = \frac{1}{\alpha}\lDim{\Phi_{\alpha}}^{\alpha m} E \quad\text{  and }\quad  \uDim{\Phi} B_{\alpha}(E) = \frac{1}{\alpha}\uDim{\Phi_{\alpha}}^{\alpha m} E,
		      \end{equation}
		      where $ \Phi_{\alpha} $ is defined in \eqref{eq:def-PhiAlpha}.
	\end{enumerate}
\end{theorem}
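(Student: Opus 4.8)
\emph{A unified scheme.} Although the three parts are stated separately, the plan is to treat them through a single potential-theoretic strategy, with \autoref{set:selfaffine} as the template. The first step is a capacity-theoretic description of the \tPhiDims: for a scale $r\in(0,1]$ and an exponent $s\ge 0$ one introduces kernels $\kerPhi[s]{r}{\cdot,\cdot}$ and shows that the capacities $\capPhi[s]{r}{\cdot}$ they define detect $\uDim{\Phi}F$ and $\lDim{\Phi}F$ for bounded $F$ in Euclidean space, generalising the $\theta$\nobreakdash-versions of \cite{Falconer2021,BurrellEtAl2021}; the two dimensions differ only in whether a lower bound on $\capPhi[s]{r}{\cdot}$ is demanded for all small $r$ or merely along some sequence $r\to 0$, mirroring the quantifier structure over scales in \autoref{def:thetaDimByCover}. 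It is here that hypothesis \eqref{eq:GenThm-Cond} enters: passing between a cover by sets of diameter in $[\Phi(r),r]$ and a measure of controlled kernel-energy incurs a multiplicative overhead which is a fixed power of $\log(r/\Phi(r))$, and \eqref{eq:GenThm-Cond} says precisely that $\log\Phi(r)=o(r^{-\varepsilon})$ for every $\varepsilon>0$, so this overhead is absorbed by any strictly positive power-law decay of the sums in question and does not shift the critical exponent; for $\Phi(r)=r^{1/\theta}$ the overhead is merely $O(\log(1/r))$, which is why no such hypothesis appeared in \autoref{thm:main}. By their definitions in \autoref{def:CapPhiDim} and \autoref{def:DimProfile}, the generalised capacity dimensions $\uDim{C,\Phi}E,\lDim{C,\Phi}E$ and the generalised dimension profiles $\uDim{\Phi}^{\tau}E,\lDim{\Phi}^{\tau}E$ are the same capacity quantities computed with modified kernels: in \autoref{set:selfaffine} the Euclidean distance is replaced by a symbolic quantity built from the singular values of the matrix product $T_{i_{1}}\cdots T_{i_{k}}$ along the common prefix $x\wedge y=i_{1}\cdots i_{k}$, and in \autoref{set:orthogon} and \autoref{set:brownian} by the $\tau$\nobreakdash-dimensional (respectively, $\alpha m$\nobreakdash-dimensional after a change of scale) version of the kernel. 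With these identifications every inequality in \autoref{thm:PhiDim} becomes a comparison between a target capacity and the appropriate source capacity.

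\emph{The universal upper bounds.} For the estimates valid for every $\bfa$ and every $V$, and for the $\le$ direction throughout, I would transport covers. In \autoref{set:selfaffine} a cover of $E$ by cylinders $[\mathbf{i}]$, $\mathbf{i}=i_{1}\cdots i_{k}$, maps under $\pia$ onto the sets $f_{i_{1}}^{\bfa}\circ\cdots\circ f_{i_{k}}^{\bfa}(K^{\bfa})$, each a translate of $T_{i_{1}}\cdots T_{i_{k}}(K^{\bfa})$ and hence contained in an ellipsoid whose semi-axes are comparable to the singular values of $T_{i_{1}}\cdots T_{i_{k}}$; subdividing such ellipsoids into pieces of diameter in $[\Phi(r),r]$ produces exactly the sums weighted by the singular value function in the definition of $\uDim{C,\Phi}E$, up to the overhead controlled by \eqref{eq:GenThm-Cond}. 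For \autoref{set:orthogon} the relevant fact is that $\piV$ is $1$\nobreakdash-Lipschitz, so $\abs{\piV U}\le\abs{U}$; for \autoref{set:brownian} it is the almost-sure modulus of continuity of $B_{\alpha}$, which both forces the passage to $\Phi_{\alpha}$ (defined in \eqref{eq:def-PhiAlpha}) and the factor $1/\alpha$ — a set of diameter $\delta$ has image of diameter $\lesssim\delta^{\alpha}(\log(1/\delta))^{1/2}$ — and is the reason \eqref{eq:GenThm-brown} is stated only almost surely, with no deterministic counterpart; the extra square-root-logarithm factor is again absorbed using \eqref{eq:GenThm-Cond}. Taking suprema over $s$ gives the one-sided bounds, with no randomness used in \autoref{set:selfaffine} or \autoref{set:orthogon}.

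\emph{The sharp values.} For the reverse inequalities, fix $s$ strictly below the relevant source quantity, take a probability measure $\mu$ on the source that realises a lower bound for the source capacity at the appropriate scales, and push it forward by the random map $T_{\omega}$ — namely $\pia$ in \autoref{set:selfaffine}, $\piV$ in \autoref{set:orthogon}, and $B_{\alpha}$ in \autoref{set:brownian}. The crux is the estimate
\begin{equation*}
	\int\!\!\int \mathbb{E}_{\omega}\!\left[\kerPhi[s]{r}{T_{\omega}(x),T_{\omega}(y)}\right]\, d\mu(x)\, d\mu(y)\ \lesssim\ \int\!\!\int \bigl(\text{the source-side kernel at scale }r\bigr)\, d\mu(x)\, d\mu(y),
\end{equation*}
the right-hand side being finite by the choice of $\mu$. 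The pointwise input is, respectively: the transversality estimate — valid precisely because $\norm{T_{j}}<1/2$ for all $j$, the same mechanism as in \cite{Solomyak1998,FengEtAl2022} — controlling, uniformly in $x$ and $y$, the quantity $\calL^{dm}\{\bfa\in\calD:\abs{\pia(x)-\pia(y)}\le\rho\}$ over a ball $\calD\subset\euclid[dm]$ in terms of $\rho$ and the singular values of $T_{i_{1}}\cdots T_{i_{k}}$; the integralgeometric estimate $\gamma_{d,m}\{V:\abs{\piV x}\le\rho\}\asymp\min\{1,(\rho/\abs{x})^{m}\}$; and the Gaussian estimate $\bbP\{\abs{\Balph{x}-\Balph{y}}\le\rho\}\asymp\min\{1,(\rho\,\abs{x-y}^{-\alpha})^{m}\}$. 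Once the expected target energy is bounded by a finite quantity, one restricts to a countable set of scales and applies Fubini to obtain a single full-measure set of parameters on which the push-forward of $\mu$ has finite target energy at all these scales; this yields the matching lower bound on the target $\Phi$\nobreakdash-dimension — along the sequence of scales realising positivity on the source in the case of $\lDim{\Phi}$, and along all scales (via a comparison of nearby scales) in the case of $\uDim{\Phi}$. Letting $s$ increase to the source quantity completes each part.

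\emph{The main obstacle.} I expect the difficulty to lie not in the averaging step — which, given the transversality input already developed in \cite{FengEtAl2022} and the classical estimates underlying \autoref{set:orthogon} and \autoref{set:brownian}, is largely bookkeeping — but in the design of the $\Phi$\nobreakdash-kernels and the capacity characterisation of the \tPhiDims\ on which everything else rests. The kernels must be chosen so that a single definition simultaneously (a) reproduces $\uDim{\Phi}$ and $\lDim{\Phi}$ under the sharp hypothesis \eqref{eq:GenThm-Cond}, (b) is comparable under push-forward to each of the modified source kernels, and (c) behaves correctly in the anisotropic self-affine case, where a cylinder maps to an ellipsoid rather than a ball and the two-sided window $\Phi(r)\le\abs{U_{i}}\le r$ must be reconciled with singular values that may span many orders of magnitude — so that no single cylinder length will do and the kernel must in effect encode the optimal multi-scale choice of cylinders. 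Producing a kernel flexible enough for (a)--(c) at once, rather than one tailored to a single setting as in \cite{BurrellEtAl2021}, is the conceptual heart of the argument.
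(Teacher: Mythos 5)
Your proposal follows essentially the same route the paper takes: introduce $\Phi$-adapted kernels and the associated capacities, characterize the $\Phi$-intermediate dimensions in terms of those capacities (the paper does this in \autoref{lem:char-PhiDim}, \autoref{def:CapPhiDim}, and \autoref{def:DimProfile}), establish the one-sided bounds by transporting covers via equilibrium measures (\autoref{lem:Phi-CoverSumUB}, aided by \autoref{lem:HolderCap}, \autoref{lem:FBM-Cap}, L\'evy's modulus \autoref{lem:LevyCts}, and a Besicovitch covering argument in the orthogonal-projection setting), and obtain the matching lower bounds from the transversality estimates (\autoref{lem:paramMeasure}, \autoref{lem:trans-orthon}, \autoref{lem:trans-FBM}) fed through a unified integral computation (\autoref{lem:Phi-TransIntegral}) and a Borel--Cantelli/Fubini step. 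The role you assign to hypothesis~\eqref{eq:GenThm-Cond} --- absorbing the $\log(1/\Phi(r))$ overhead --- and the mechanism forcing the passage $\Phi\rightsquigarrow\Phi_{\alpha}$ and the $1/\alpha$ rescaling in the Brownian part are exactly as in the paper.

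One point is reversed and worth flagging. In the final step you write that the lower bound is obtained ``along the sequence of scales realising positivity on the source in the case of $\lDim{\Phi}$, and along all scales (via a comparison of nearby scales) in the case of $\uDim{\Phi}$.'' It is the opposite: since $\uDim{\Phi}$ and its capacity counterpart are $\limsup$-type quantities, the Fubini/Borel--Cantelli argument along a single sequence $(r_k)$ (chosen to realise the source $\limsup$ and satisfying $r_k\le 2^{-k}$ for summability) already yields the $\uDim{\Phi}$ lower bound --- this is what the paper writes out. The $\lDim{\Phi}$ case is a $\liminf$, so after the almost-sure conclusion along a fixed countable scale set one still needs a comparison between nearby scales to propagate the bound to all small $r$; the paper records this only as ``the proof is similar.'' The slip does not affect the overall correctness of the plan, but in a written-out proof the two cases do need that asymmetric treatment, and getting the direction right matters.

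Otherwise the proposal is accurate. Your description of the self-affine upper bound as ``transporting cylinders and subdividing the resulting ellipsoids'' is a faithful informal rendering of what \autoref{lem:Phi-CoverSumUB}\ref{itm:phi-UB-affine} does via \autoref{lem:kerZ-counting}, with the equilibrium measure of \autoref{lem:equil-meas} supplying the ``optimal multi-scale choice of cylinders'' you anticipate. Your diagnosis that the kernel design (items (a)--(c)) is the conceptual crux also agrees with the paper's own emphasis.
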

As is noted by Banaji, a question not explored in \cite{Banaji2020} is that whether the potential\nobreakdash-theoretic methods in \cite{BurrellEtAl2021,Burrell2021} can be adapted to study the \tPhiDims. This question is answered affirmatively by \autoref{thm:PhiDim} based on the kernels in \autoref{def:PhiKernels} and the condition \eqref{eq:GenThm-Cond}. Note that $ \eqref{eq:GenThm-Cond} $ holds if $ \liminf_{r\to 0} \log r /\log \Phi(r) > 0 $, which is satisfied by $ \Phi(r) = r^{1/\theta} $ ($ 0 < \theta < 1 $) and $ \Phi(r) =  - r/ \log r $. There are more general functions satisfying \eqref{eq:GenThm-Cond}, for example, $ \Phi(r) = r^{-\log r} $.

The paper is organizied as follows. In \autoref{sec:prelim}, we provide the definitions of the intermediate and capacity dimensions. Then the proofs of \ref{itm:UB} and \ref{itm:LB} of \autoref{thm:main} are respectively given in \autoref{sec:UB} and \autoref{sec:LB}. After introducing the generalized capacity dimensions and the generalized dimension profiles, we prove \autoref{thm:PhiDim} in \autoref{sec:PhiDim}. Finally, a few remarks are given in the last section.

\section{Preliminaries}\label{sec:prelim}

Throughout this paper, we shall mean by $ a \lesssim_{\varepsilon} b $ that $ a \leq Cb $ for some positive constant $ C $ depending on $ \varepsilon $. We write $ a \approx_{\varepsilon} b $ if $ a \lesssim_{\varepsilon} b $ and $ b \lesssim_{\varepsilon} a $. If it is clear from the context what $ C $ should depend on, we may briefly write by $ a \lesssim b $ or $ a\approx b $. We denote the natural logarithm by $ \log $  and the natural exponential by $ \exp $. By $ \# $ we denote the cardinality of a finite set. In a metric space, the closed ball centered at $x$ with radius $r$ is denoted by $B(x,r)$, and the closure of a set $E $ is denoted by $\overline{E}$.

\subsection{Intermediate dimensions} \label{sec:InterDims}
As noted in \cite{BurrellEtAl2021}, it is convenient to work with some equivalent definitions of the \tThetaDims. These definitions are expressed as limits of logarithms of sums over covers.
For $ s \geq 0 $, $ 0 < \theta \leq 1 $, and $ E \subset  \euclid $, define
\begin{equation*}
	\sumTheta{r}{E} \coloneqq  \inf \left \{\sum_{i} \abs{U_{i}}^{s} \colon \{U_{i}\} \text{ is a cover of } E \text{ with } r^{1/\theta}\leq \abs{U_{i}} \leq r \right  \}.
\end{equation*}

\begin{lemma}\label{def:thetaDim}
	Let $ 0 < \theta \leq 1 $. Then for $ E \subset \euclid $,
	\begin{equation*}
		\lDim{\theta} E = \left ( \text{the unique } s \in [0, d] \text{ such that } \liminf_{r\to 0} \frac{\log \sumTheta{r}{E}}{-\log r} = 0 \right)
	\end{equation*}
	and
	\begin{equation*}
		\uDim{\theta} E = \left ( \text{the unique } s \in [0, d] \text{ such that } \limsup_{r\to 0} \frac{\log \sumTheta{r}{E}}{-\log r} = 0 \right).
	\end{equation*}
\end{lemma}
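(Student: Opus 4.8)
The plan is to show that the quantity $\liminf_{r\to 0} \frac{\log \sumTheta{r}{E}}{-\log r}$ is a non-increasing function of $s$ that passes (continuously, in an appropriate sense) from a non-negative value to $-\infty$ as $s$ ranges over $[0,d]$, so that there is a unique $s$ where it equals $0$; and then to identify that $s$ with $\lDim{\theta}E$ by matching the $\varepsilon$-quantifier formulation of \autoref{def:thetaDimByCover} against the $\liminf = 0$ condition. The argument for $\uDim{\theta}$ is verbatim the same with $\limsup$ in place of $\liminf$ and the quantifier "for all $r_0$, there exists $r<r_0$" replaced by "there exists $r_0$, for all $r<r_0$".

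First I would record the elementary monotonicity and scaling facts about $\sumTheta{r}{E}$. For fixed $r$, the map $s \mapsto \sumTheta{r}{E}$ is non-increasing since $\abs{U_i}\le r \le 1$. Moreover, because every admissible cover in the infimum uses sets with $\abs{U_i}\ge r^{1/\theta}$, for $s' > s$ we have $\sum_i \abs{U_i}^{s'} \le r^{(s'-s)/\theta}\sum_i\abs{U_i}^{s}$, hence $\sumTheta{s'}{r}{E}\le r^{(s'-s)/\theta}\sumTheta{r}{E}$ (using the paper's notation $\sumTheta[s']{r}{E}$). Taking $\log$, dividing by $-\log r>0$, and letting $r\to0$ gives
\begin{equation*}
	\liminf_{r\to 0}\frac{\log \sumTheta[s']{r}{E}}{-\log r} \le \liminf_{r\to 0}\frac{\log\sumTheta{r}{E}}{-\log r} - \frac{s'-s}{\theta},
\end{equation*}
so the $\liminf$ is strictly decreasing in $s$ with slope at most $-1/\theta$; in particular it is continuous and takes the value $0$ at exactly one point, provided it is $\ge 0$ somewhere and $\le 0$ somewhere on $[0,d]$. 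For the endpoints: at $s=0$ one has $\sumTheta[0]{r}{E}\ge 1$ (any non-empty cover has at least one member and $\abs{U_i}^0=1$), so the ratio is $\ge 0$; at $s=d$ a cover of the bounded set $E$ by roughly $(\diam E / r)^d$ cubes of side $\approx r$ — which is admissible since $r^{1/\theta}\le r$ — gives $\sumTheta[d]{r}{E}\lesssim 1$, so the ratio is $\le 0$ in the limit. This pins down the unique $s$.

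It remains to identify this $s$ with $\lDim{\theta}E$ from \autoref{def:thetaDimByCover}. Fix $s$ and suppose $\liminf_{r\to0}\frac{\log\sumTheta{r}{E}}{-\log r} < 0$; then for some $\delta>0$ there are arbitrarily small $r$ with $\sumTheta{r}{E}\le r^{\delta}$, and by the strict monotonicity just proved, for all $s'$ in a neighbourhood below $s$ the same $\liminf$ is still negative, hence for every $\varepsilon>0$ and every $r_0$ there is $r<r_0$ with an admissible cover $\{U_i\}$ satisfying $\sum_i\abs{U_i}^{s'}\le r^{\delta'}<\varepsilon$ — wait, this is the lower-dimension quantifier, so actually I must be careful: the $\liminf$ condition "$<0$" says there are arbitrarily small good $r$, matching exactly the lower-intermediate-dimension quantifier "for all $\varepsilon,r_0$ there exist $r<r_0$ and a cover". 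So $\liminf<0$ at $s$ forces $\lDim{\theta}E\le s$ (indeed $<s$), while conversely if $\liminf>0$ at $s$ then for small $r$ every admissible cover has $\sum_i\abs{U_i}^{s}\ge r^{-\delta}\to\infty$, so no $\varepsilon$ works and $\lDim{\theta}E\ge s$. Combining, $\lDim{\theta}E$ equals the unique zero. The analogous chain for $\uDim{\theta}$ uses that $\limsup<0$ means "for all small $r$, good", matching the upper-dimension quantifier "there exists $r_0$, for all $r<r_0$, a cover exists". The one point needing mild care — and the closest thing to an obstacle — is the boundary behaviour: checking that the infimum defining $\sumTheta{r}{E}$ is finite (so its logarithm makes sense) for every $r$, which follows from boundedness of $E$ and $r^{1/\theta}\le r$ giving at least one admissible cover; and handling the degenerate possibility that the unique zero sits exactly at an endpoint $0$ or $d$, which the $s=0$ and $s=d$ estimates above rule in as the correct value there too.
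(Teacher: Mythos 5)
Your overall route matches the paper's: establish a two\nobreakdash-sided Lipschitz estimate for $s\mapsto \log\sumTheta{r}{E}/(-\log r)$ (the paper simply cites this from Burrell--Falconer--Fraser), use it to get strict monotonicity and continuity, pin down the endpoint values at $s=0$ and $s=d$, and then match the quantifier structure of the covering definition against the sign of the liminf/limsup. However, the key inequality you write down has the wrong direction. From $\abs{U_i}\geq r^{1/\theta}$ (with $r^{1/\theta}\leq 1$ and $s'-s>0$) one gets $\abs{U_i}^{s'-s}\geq r^{(s'-s)/\theta}$, not $\leq$, hence
\begin{equation*}
\sumTheta[s']{r}{E}\geq r^{(s'-s)/\theta}\sumTheta{r}{E},
\end{equation*}
which after taking logarithms and dividing by $-\log r>0$ gives the \emph{lower} bound
\begin{equation*}
\liminf_{r\to 0}\frac{\log\sumTheta[s']{r}{E}}{-\log r}\geq \liminf_{r\to 0}\frac{\log\sumTheta{r}{E}}{-\log r}-\frac{s'-s}{\theta},
\end{equation*}
not the upper bound you state. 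This is the side responsible for continuity (the function cannot drop faster than rate $1/\theta$). Strict decrease comes from the other constraint $\abs{U_i}\leq r$, which gives $\sumTheta[s']{r}{E}\leq r^{s'-s}\sumTheta{r}{E}$ and thus
\begin{equation*}
\liminf_{r\to 0}\frac{\log\sumTheta[s']{r}{E}}{-\log r}\leq \liminf_{r\to 0}\frac{\log\sumTheta{r}{E}}{-\log r}-(s'-s).
\end{equation*}
Your phrase ``strictly decreasing with slope at most $-1/\theta$; in particular it is continuous'' therefore mixes up which bound does what: a slope bounded above by $-1/\theta$ would give strict decrease but, on its own, says nothing about continuity. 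The correct statement is the one the paper records, namely that the difference quotient lies between $-1/\theta$ and $-1$; the $-1$ side gives strict monotonicity and the $-1/\theta$ side gives Lipschitz continuity, and you need both to conclude there is a unique zero. The endpoint checks ($\sumTheta[0]{r}{E}\geq 1$ and $\sumTheta[d]{r}{E}\lesssim 1$ from a cube cover of the bounded set $E$) and the identification step linking the sign of the liminf/limsup at $s$ to whether $\lDim{\theta}E$ or $\uDim{\theta}E$ is $\leq s$ or $\geq s$ are correct, and are exactly what the paper leaves implicit when it calls the lemma a ``direct consequence'' of the Lipschitz bound.
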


\autoref{def:thetaDim} is a direct consequence of the following result.

\begin{lemma}[{\cite[Lemma 2.1]{BurrellEtAl2021}}]\label{lem:theta-CoverSumLimit-Cts}
	Let $ 0 < \theta \leq 1 $ and $ E \subset \euclid $. For $ 0 < r \leq 1 $ and $ 0\leq t\leq s \leq d $,
	\begin{equation*}
		- (1/\theta) (s-t) \leq \frac{\log \sumTheta{r}{E}}{-\log r} - \frac{\log \sumTheta[t]{r}{E} }{-\log r} \leq  -(s-t).
	\end{equation*}
	In particular, there is a unique $ s \in [0, d] $ such that $ \liminf_{r\to 0} \frac{\log \sumTheta{r}{E} }{-\log r} = 0 $ and  a unique $ s \in [0, d] $ such that $ \limsup_{r\to 0} \frac{\log \sumTheta{r}{E} }{-\log r} = 0 $.
\end{lemma}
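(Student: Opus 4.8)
The plan is to reduce everything to a comparison, at each fixed scale $r$, between the exponents $s$ and $t$ evaluated on a single cover, and then to pass to the limit $r \to 0$.

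First I would fix $0 < r < 1$ and $0 \le t \le s \le d$, and take any cover $\{U_i\}$ of $E$ with $r^{1/\theta} \le \abs{U_i} \le r$. Writing $\abs{U_i}^s = \abs{U_i}^t\,\abs{U_i}^{s-t}$ and estimating the factor $\abs{U_i}^{s-t}$ from above by $r^{s-t}$ and from below by $r^{(s-t)/\theta}$ (using $s - t \ge 0$ together with the two-sided size constraint), then summing over $i$ and taking infima over all admissible covers — crucially, the class of admissible covers is the same for every exponent — gives
\[
	r^{(s-t)/\theta}\,\sumTheta[t]{r}{E} \;\le\; \sumTheta{r}{E} \;\le\; r^{s-t}\,\sumTheta[t]{r}{E}.
\]
Taking logarithms, dividing by $-\log r > 0$, and using $\log r/(-\log r) = -1$ converts this directly into the asserted double inequality; the factor $1/\theta$ lands on the left-hand side precisely because dividing by the negative number $\log r$ reverses the inequalities. (The endpoint $r = 1$ is degenerate — the size constraint then forces $\abs{U_i} = 1$ — and may simply be excluded.)

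For the ``in particular'' clause I would set $\underline{f}(s) = \liminf_{r\to 0}\log\sumTheta{r}{E}/(-\log r)$ and $\overline{f}(s) = \limsup_{r\to 0}\log\sumTheta{r}{E}/(-\log r)$, and apply $\liminf_{r\to 0}$ (respectively $\limsup_{r\to 0}$) to the double inequality just obtained; the additive constants $-(s-t)$ and $-(s-t)/\theta$ pass through both the $\liminf$ and the $\limsup$, so for $0 \le t \le s \le d$ one gets
\[
	-\frac{1}{\theta}(s-t) \;\le\; \underline{f}(s) - \underline{f}(t) \;\le\; -(s-t),
\]
and likewise for $\overline{f}$. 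Thus $\underline{f}$ and $\overline{f}$ are Lipschitz (hence continuous) and strictly decreasing on $[0,d]$. To locate the zero I would record two crude endpoint bounds: since $E$ is non-empty, every admissible cover has at least one member, so $\sumTheta[0]{r}{E} \ge 1$ and hence $\underline{f}(0),\overline{f}(0) \ge 0$; and since $E$ is bounded it is covered by $O(r^{-d})$ sets of diameter $r$ (admissible because $r \ge r^{1/\theta}$ when $\theta \le 1$), so $\sumTheta[d]{r}{E}$ is bounded above uniformly in $r$ and therefore $\underline{f}(d),\overline{f}(d) \le 0$ (this, combined with the previous display, also shows $\underline{f}$ and $\overline{f}$ are finite on all of $[0,d]$). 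A continuous strictly decreasing function on $[0,d]$ that is $\ge 0$ at $0$ and $\le 0$ at $d$ has a unique zero, which yields both assertions.

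I do not expect a genuine obstacle here: the argument is elementary. The only points that need care are the sign reversal when dividing through by $\log r < 0$, the observation that the admissible covers do not depend on the exponent (so the infimum manipulation in the first step is legitimate), and — for the uniqueness in the last clause — the fact that the upper estimate $\underline{f}(s) - \underline{f}(t) \le -(s-t)$ is \emph{strict} when $s > t$, which is exactly what forbids a flat interval on which the limit could vanish.
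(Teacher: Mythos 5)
Your proof is correct and follows the natural route: compare $\abs{U_i}^s = \abs{U_i}^t \abs{U_i}^{s-t}$ using the size constraint $r^{1/\theta}\le\abs{U_i}\le r\le 1$ to get $r^{(s-t)/\theta}\sumTheta[t]{r}{E}\le\sumTheta{r}{E}\le r^{s-t}\sumTheta[t]{r}{E}$, take logarithms, divide by $-\log r$, and then pin down the unique zero of the Lipschitz, strictly decreasing functions $\underline f,\overline f$ via the endpoint bounds $\sumTheta[0]{r}{E}\ge 1$ and $\sumTheta[d]{r}{E}\lesssim_d\abs{E}^d$. The paper does not re-prove this lemma (it is quoted from Burrell--Falconer--Fraser), but the argument it gives for the companion capacity statement (\autoref{lem:theta-CapLimit-Cts}) has exactly the same skeleton — pointwise comparison of $\kerTheta{r}{\cdot}$ for two exponents, passage through the infimum via an equilibrium measure, then the identical endpoint bounds $\capTheta[0]{r}{E}\ge1$, $\capTheta[d]{r}{E}\le1$ — so yours is the expected proof, not a genuinely different route. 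Two small points worth tightening if you were to write this up: the sign bookkeeping is cleanest if you say you divide by $-\log r>0$ (which preserves inequalities) and use $\log r/(-\log r)=-1$, rather than describing it as a reversal; and to justify finiteness of $\underline f$ and $\overline f$ you should observe that the Lipschitz double inequality applied with $t=0$, $s=d$ together with $\underline f(0)\ge0$ and $\underline f(d)\le0$ forces $\underline f(0)\le d/\theta$ and $\underline f(d)\ge-d$, after which finiteness on all of $[0,d]$ follows by monotonicity.
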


\subsection{Capacity dimensions} \label{sec:CapDims}
Let $ T_{1}, \ldots,  T_{m} $ be a fixed family of contracting $ d\times d $ invertible real matrices. Recall $ \Sigma = \{1, \ldots, m\}^{\bbN} $. For $ n \in \bbN $, write $ \Sigma_{n} := \{1, \ldots, m\}^{n} $ and $ \Sigma^{\ast} := \Union_{n=1}^{\infty} \Sigma_{n} \Union \{\varnothing\}$, where $ \varnothing $ denotes the empty word. Write $ \abs{I} := n $ for $ I \in \Sigma_{n} $. For $ x = (x_{k})_{k=1}^{\infty} \in \Sigma $ and $ n \in \bbN $, denote $ x|n := x_{1}\cdots x_{n} $.
For $ I = i_{1} \cdots i_{n} \in \Sigma_{n} $, define
\begin{equation*}
	[I] := \{ x\in \Sigma \colon x|n = I \} \mAnd T_{I} := T_{i_{1}}\cdots T_{i_{n}}.
\end{equation*}
By convention we let $ x|0 = \varnothing $ and $ T_{\varnothing} $ be the identity map on $ \euclid $. Let $ \xWy $ denote the common initial segment of $ x, y \in \Sigma $. Endow $ \Sigma $ with the canonical metric $ d(x,y) := \exp(-\abs{\xWy}) $ for $ x, y \in \Sigma $. By convention we set $ \mu(I) := \mu([I]) $ for $ I \in \Sigma^{\ast} $ and any Borel measure $ \mu $ on $ \Sigma $. For any $ d\times d$ real matrix $ T $, the singular values of $ T $ are decreasingly denoted by $ \alpha_{1}(T), \ldots, \alpha_{d}(T) $.

Following \cite{JordanEtAl2007}, we define for $ r > 0 $, 
\begin{equation}\label{eq:def-kerZ}
	\kerZ{r}{\xWy} := \begin{cases}
		\prod_{k=1}^{d} \min \{1, \frac{r}{\alpha_{k}(T_{x\wedge y})}\} & \text{if }  x\neq y \\
		1                                                               & \text{if } x = y
	\end{cases} \mFor x,y\in \Sigma.
\end{equation}
For $ 0 \leq s \leq d  $, $ 0 <  \theta \leq 1 $, and $ r > 0 $, we introduce the \textit{kernel}
\begin{equation}\label{eq:def-kerTheta}
	\kerTheta{r}{\xWy} := \max_{r^{1/\theta} \leq u \leq r} u^{-s}\kerZ{u}{\xWy}  \mFor x, y \in \Sigma.
\end{equation}
Let $ \calP(E) $ denote the set of Borel probability measures supported on a compact set $ E $.
For compact set $ E \subset \Sigma $, the \textit{capacity} of $ E $ is defined by
\begin{equation}\label{eq:def-thetaCap}
	\capTheta{r}{E} = \left ( \inf_{\mu\in\calP(E)} \iint \kerTheta{r}{\xWy} \,d\mu(x)d\mu(y) \right )^{-1}.
\end{equation}
By convention we set $ \capTheta{r}{E} := \capTheta{r}{\ol{E}} $ for non-compact subset $ E \subset \Sigma $. Thus we can assume, without loss of generality, that the set whose capacities are considered is compact.

The existence of equilibrium measures for kernels and the relationship between the minimal energy and the corresponding potentials is standard in classical potential theory. We state this in a convenient form for the positive symmetric continuous kernels (cf.\ \cite[Theorem 2.4]{Fuglede1960} or \cite[Lemma 2.1]{Falconer2021}).

\begin{lemma}\label{lem:equil-meas}
	Let $ E $ be a non-empty compact set in a metric space, and let $ K \colon E \times E \to (0,+\infty) $ be a continuous function such that $ K(x,y) = K(y,x) $. Then there is some measure $ \mu_{0} \in \calP(E) $ such that
	\begin{equation*}
		\iint K(x,y) \, d\mu_{0}(x)d\mu_{0}(y) = \frac{1}{C_{K}(E)} ,
	\end{equation*}
	where $ C_{K}(E) = \left ( \inf_{\mu\in\calP(E)} \iint K(x,y)\,d\mu(x)d\mu(y) \right )^{-1} $. Moreover,
	\begin{equation}\label{eq:equil-meas}
		\int K(x,y) \, d\mu_{0}(y) \geq \frac{1}{C_{K}(E)} \quad \text{ for all } x\in E,
	\end{equation}
	with equality for $ \mu_{0} $-a.e.~$ x\in E $.
\end{lemma}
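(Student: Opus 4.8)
The plan is to follow the classical route from potential theory: first produce $\mu_0$ as a minimizer of the energy functional by a weak-$*$ compactness argument, then extract the pointwise information about its potential from a first-variation (perturbation) argument.

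First I would set $V \coloneqq \inf_{\mu\in\calP(E)} \iint K(x,y)\,d\mu(x)d\mu(y)$, so that $C_K(E) = 1/V$ by definition. Since $K$ is continuous and strictly positive on the compact set $E\times E$, we have $K \geq c > 0$ there, whence $V \geq c > 0$ and $C_K(E)$ is a well-defined finite positive number. Because $E$ is a compact metric space, $\calP(E)$ is compact and metrizable in the weak-$*$ topology. The map $\mu \mapsto \iint K\,d\mu\,d\mu$ is weak-$*$ continuous: if $\mu_n \to \mu$ weak-$*$, then $\mu_n\otimes\mu_n \to \mu\otimes\mu$ weak-$*$ on $\calP(E\times E)$, since for $f,g\in C(E)$ one has $\int f(x)g(y)\,d(\mu_n\otimes\mu_n) = \bigl(\int f\,d\mu_n\bigr)\bigl(\int g\,d\mu_n\bigr) \to \bigl(\int f\,d\mu\bigr)\bigl(\int g\,d\mu\bigr)$, and finite linear combinations of such products are dense in $C(E\times E)$ by Stone--Weierstrass; as $K\in C(E\times E)$, the claim follows. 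Hence the infimum $V$ is attained at some $\mu_0\in\calP(E)$, giving $\iint K\,d\mu_0\,d\mu_0 = V = 1/C_K(E)$, which is the first assertion.

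Next I would examine the potential $u(x) \coloneqq \int K(x,y)\,d\mu_0(y)$, which is continuous on $E$, and note that $\int u\,d\mu_0 = \iint K\,d\mu_0\,d\mu_0 = V$. To show $u \geq V$ everywhere, suppose for contradiction that $u(x_0) < V$ for some $x_0\in E$, and perturb: for $t\in[0,1]$ put $\mu_t \coloneqq (1-t)\mu_0 + t\,\delta_{x_0}\in\calP(E)$. Expanding and using the symmetry $K(x,x_0)=K(x_0,x)$,
\[
	\iint K\,d\mu_t\,d\mu_t = (1-t)^2 V + 2t(1-t)\,u(x_0) + t^2 K(x_0,x_0),
\]
so the right derivative at $t=0$ is $2\bigl(u(x_0)-V\bigr) < 0$, forcing $\iint K\,d\mu_t\,d\mu_t < V$ for small $t>0$ and contradicting the minimality of $\mu_0$. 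Therefore $u(x)\geq V = 1/C_K(E)$ for all $x\in E$. Since $u \geq V$ pointwise while $\int u\,d\mu_0 = V$, the set $\{u > V\}$ is $\mu_0$-null, i.e.\ $u(x) = 1/C_K(E)$ for $\mu_0$-a.e.\ $x\in E$, which is the remaining claim.

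I do not expect a serious obstacle here; the only points requiring care are the weak-$*$ compactness of $\calP(E)$ together with the continuity of the energy functional (for which continuity of $K$ on the compact product is exactly what is used) and the bookkeeping in the quadratic expansion of $\iint K\,d\mu_t\,d\mu_t$. The hypothesis that $K$ is strictly positive is what guarantees $V>0$, so that $C_K(E)$ and all the reciprocals are meaningful, while continuity of $K$ (rather than mere lower semicontinuity) is what keeps the perturbation step clean, since then $u$ is continuous and $K(x_0,x_0)$ is finite.
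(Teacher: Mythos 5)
Your proof is correct and is essentially the standard argument the paper points to (the paper does not prove \autoref{lem:equil-meas} itself but cites Fuglede and Falconer, whose arguments run exactly along your lines: weak-$*$ compactness of $\calP(E)$ plus Stone--Weierstrass for existence of the minimizer, then the first-variation perturbation $\mu_t=(1-t)\mu_0+t\delta_{x_0}$ for the potential lower bound, and the integral identity for the a.e.\ equality). The only thing worth noting is that your hypotheses are in fact a bit stronger than strictly necessary --- Fuglede's version allows merely lower semicontinuous kernels, which costs some extra care in the compactness step --- but for the continuous, strictly positive kernels actually used in this paper your cleaner continuity-based argument is the right one and matches Falconer's Lemma 2.1.
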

A measure $ \mu_{0} $ in \autoref{lem:equil-meas} is called an \textit{equilibrium measure} for the kernel $ K $. Now we are ready to introduce the capacity dimensions.
\begin{definition}[capacity dimensions]\label{def:theta-CapDim}
	Let  $ 0 < \theta \leq 1 $ and $ E \subset \Sigma $. The \textit{lower and upper capacity dimensions} of $ E $ are respectively defined by
	\begin{equation*}
		\lDim{C,\theta} E := \left ( \text{the unique } s \in [0, d] \text{ such that } \liminf_{r\to 0} \frac{\log \capTheta{r}{E} }{-\log r} = 0 \right ),
	\end{equation*}
	and
	\begin{equation*}
		\uDim{C,\theta} E := \left ( \text{the unique } s \in [0, d] \text{ such that } \limsup_{r\to 0} \frac{\log \capTheta{r}{E} }{-\log r} = 0 \right ).
	\end{equation*}
\end{definition}

\autoref{def:theta-CapDim} is justified by the following lemma, an analog of \cite[Lemma 3.2]{BurrellEtAl2021}. For completeness, we include a detailed proof.

\begin{lemma}\label{lem:theta-CapLimit-Cts}
	Let $ 0 < \theta \leq 1 $ and $ E \subset \Sigma $. Then for $ 0 < r \leq 1 $ and $ 0 \leq t \leq s \leq d $,
	\begin{equation}\label{eq:cts-LogCap}
		-(1/\theta) (s-t) \leq \frac{\log \capTheta[s]{r}{E} }{-\log r} -  \frac{\log \capTheta[t]{r}{E}}{-\log r} \leq -(s-t).
	\end{equation}
	In particular, there is a unique $ s \in [0, d] $ such that $ \liminf_{r\to 0} \frac{\log \capTheta{r}{E} }{-\log r} = 0 $ and a unique $ s \in [0, d] $ such that $ \limsup_{r\to 0} \frac{\log \capTheta{r}{E} }{-\log r} = 0 $.
\end{lemma}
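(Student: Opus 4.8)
The plan is to carry the two-sided comparison between the kernels $\kerTheta{r}{\xWy}$ and $\kerTheta[t]{r}{\xWy}$ down to the capacities $\capTheta{r}{E}$ and $\capTheta[t]{r}{E}$, and then to the logarithmic ratios appearing in \eqref{eq:cts-LogCap}; this is the energy-level analogue of the cover-level estimate in \autoref{lem:theta-CoverSumLimit-Cts}. First I would fix $0\le t\le s\le d$ and $0<r<1$, and note that since $s-t\ge 0$ and $r^{1/\theta}\le u\le r\le 1$, one has $r^{-(s-t)}\le u^{-(s-t)}\le r^{-(s-t)/\theta}$ for every $u\in[r^{1/\theta},r]$. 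Writing $u^{-s}\kerZ{u}{\xWy}=u^{-(s-t)}\cdot u^{-t}\kerZ{u}{\xWy}$ and maximizing over $u\in[r^{1/\theta},r]$ as in \eqref{eq:def-kerTheta}, this gives, for all $x,y\in\Sigma$,
\begin{equation*}
	r^{-(s-t)}\,\kerTheta[t]{r}{\xWy}\ \le\ \kerTheta{r}{\xWy}\ \le\ r^{-(s-t)/\theta}\,\kerTheta[t]{r}{\xWy}.
\end{equation*}

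Next I would pass to capacities, assuming $E=\overline{E}$ compact and non-empty. From the elementary bounds $u^{d}\le\kerZ{u}{\xWy}\le 1$ for $0<u\le 1$ (the lower one because each factor $\min\{1,u/\alpha_k(T_{\xWy})\}\ge u$, using $\alpha_k(T_{\xWy})\le 1$) I get $r^{d-s}\le\kerTheta{r}{\xWy}\le r^{-s/\theta}$, so that every energy $\iint\kerTheta{r}{\xWy}\,d\mu(x)\,d\mu(y)$, $\mu\in\calP(E)$, is finite and positive. Integrating the displayed inequalities against $\mu\times\mu$, taking $\inf_{\mu\in\calP(E)}$ (the two prefactors are positive constants independent of $\mu$), and inverting as in \eqref{eq:def-thetaCap}, I obtain
\begin{equation*}
	r^{(s-t)/\theta}\,\capTheta[t]{r}{E}\ \le\ \capTheta{r}{E}\ \le\ r^{(s-t)}\,\capTheta[t]{r}{E},
\end{equation*}
and then taking logarithms and dividing by $-\log r>0$ yields exactly \eqref{eq:cts-LogCap}.

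For the ``in particular'' assertion I would set $h_r(s):=\log\capTheta{r}{E}/(-\log r)$ for $s\in[0,d]$ and $0<r<1$. Inequality \eqref{eq:cts-LogCap} says precisely that $s\mapsto h_r(s)+s$ is non-increasing and $s\mapsto h_r(s)+s/\theta$ is non-decreasing on $[0,d]$, for each fixed $r$. Since an inequality holding for all $r$ survives under $\liminf_{r\to 0}$ and under $\limsup_{r\to 0}$, the functions $\underline{h}(s):=\liminf_{r\to 0}h_r(s)$ and $\overline{h}(s):=\limsup_{r\to 0}h_r(s)$ inherit these two monotonicities, so each satisfies $-(1/\theta)(s'-s)\le \underline{h}(s')-\underline{h}(s)\le -(s'-s)$ for $0\le s\le s'\le d$ (and likewise for $\overline{h}$); in particular each is strictly decreasing and $(1/\theta)$-Lipschitz, hence continuous, on $[0,d]$. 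Finally, $\kerTheta[0]{r}{\xWy}=\kerZ{r}{\xWy}\le 1$ (as $\kerZ{u}{\xWy}$ is non-decreasing in $u$) forces $\capTheta[0]{r}{E}\ge 1$, so $h_r(0)\ge 0$, while $\kerTheta[d]{r}{\xWy}\ge r^{-d}\kerZ{r}{\xWy}\ge r^{-d}\cdot r^{d}=1$ forces $\capTheta[d]{r}{E}\le 1$, so $h_r(d)\le 0$. Hence $\underline{h}(0),\overline{h}(0)\ge 0\ge\underline{h}(d),\overline{h}(d)$, and by the intermediate value theorem together with strict monotonicity each of $\underline{h}$ and $\overline{h}$ has a unique zero in $[0,d]$; these are, by definition, $\lDim{C,\theta}E$ and $\uDim{C,\theta}E$.

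I do not expect any genuine obstacle here: the proof is a chain of elementary comparisons, essentially \autoref{lem:theta-CoverSumLimit-Cts} transposed from covers to energies. The only steps that will need care are the bookkeeping of inequality directions when inverting energies into capacities and when dividing by $-\log r$, and the two kernel estimates $\kerZ{r}{\xWy}\le 1$ and $\kerZ{u}{\xWy}\ge u^{d}$ — it is precisely these that confine the two critical exponents to the interval $[0,d]$ and thereby make \autoref{def:theta-CapDim} well posed.
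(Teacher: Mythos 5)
Your proof is correct and reaches the same intermediate estimate $r^{(s-t)/\theta}\capTheta[t]{r}{E}\leq\capTheta[s]{r}{E}\leq r^{s-t}\capTheta[t]{r}{E}$ via the same two-sided kernel comparison, followed by the same boundary computations $\capTheta[0]{r}{E}\geq 1$, $\capTheta[d]{r}{E}\leq 1$. The one place you diverge from the paper is in passing from the kernel bounds to the capacity bounds: the paper invokes an equilibrium measure $\mu_{0}$ for $\kerTheta[t]{r}{\cdot}$ (via \autoref{lem:equil-meas}) and tests the $s$-energy against $\mu_{0}$, whereas you simply integrate the pointwise kernel inequalities against $\mu\times\mu$ and note that $\inf_{\mu\in\calP(E)}$ respects both inequalities because the prefactors are positive constants independent of $\mu$. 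Your route is marginally more elementary --- it uses only that infima respect pointwise domination and does not need the existence of equilibrium measures at this stage --- although the paper needs \autoref{lem:equil-meas} elsewhere, so nothing is saved globally. Your packaging of the ``in particular'' clause is also slightly more explicit than the paper's ``taking limits of the quotients'': you read \eqref{eq:cts-LogCap} as asserting that $s\mapsto h_{r}(s)+s$ is non-increasing and $s\mapsto h_{r}(s)+s/\theta$ is non-decreasing, observe that both monotonicities pass to $\liminf_{r\to 0}$ and $\limsup_{r\to 0}$, and thereby get strict decrease and $(1/\theta)$-Lipschitz continuity of the two limit functions directly before applying the intermediate value theorem. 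Both arguments are sound; yours is just a bit more self-contained.
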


\begin{proof}
	Since $ 0 < r \leq 1 $ and $ 0 \leq t \leq s $, it follows from \eqref{eq:def-kerTheta} that for $ x,y \in \Sigma $,
	\begin{equation}\label{eq:Jker-Compare}
		\begin{aligned}
		r^{(1/\theta) (s-t)} \kerTheta{r}{\xWy} & =  r^{(1/\theta)(s-t)} \max_{r^{1/\theta} \leq u \leq r}  u^{-s}\kerZ{u}{\xWy}                                           \\
		                                     & = r^{(1/\theta)(s-t)} \max_{r^{1/\theta} \leq u \leq r} u^{-(s-t)} u^{-t}\kerZ{u}{\xWy}                                  \\
		                                     & \leq \max_{r^{1/\theta} \leq u \leq r}  u^{-t}\kerZ{u}{\xWy}                          & \text{by } u \geq r^{1/\theta} \\
		                                     & = \kerTheta[t]{r}{\xWy} \\
		                                     & = \max_{r^{1/\theta} \leq u \leq r}  u^{s-t} u^{-s} \kerZ{u}{\xWy} \\ & \leq  r^{s-t} \kerTheta{r}{\xWy} & \text{by } u \leq r .
	\end{aligned}
	\end{equation}
	Without loss of generality assume that $ E $ is compact. By \autoref{lem:equil-meas}, there exists an equilibrium measure $ \mu_{0} $ on $ E $ for the kernel $ \kerTheta[t]{r}{\xWy} $. Then
	\begin{equation*}
		\begin{aligned}
		r^{(1/\theta)(s-t)} (\capTheta{r}{E})^{-1} & \leq r^{(1/\theta)(s-t)} \iint \kerTheta[s]{r}{\xWy} \,d\mu_{0}(x)d\mu_{0}(y) \\ & \leq \iint \kerTheta[t]{r}{\xWy} \,d\mu_{0}(x)d\mu_{0}(y) & \text{by } \eqref{eq:Jker-Compare}
		\\ & = (\capTheta[t]{r}{E})^{-1},
	\end{aligned}
	\end{equation*}
	and so
	\begin{equation}\label{eq:capTheta-t<s}
		r^{(1/\theta)(s-t)} \capTheta[t]{r}{E} \leq \capTheta[s]{r}{E}.
	\end{equation}
	Similarly,
	\begin{equation}\label{eq:capTheta-s<t}
		\capTheta[s]{r}{E} \leq  r^{s-t} \capTheta[t]{r}{E}.
	\end{equation}
	By \eqref{eq:capTheta-t<s} and \eqref{eq:capTheta-s<t}, taking logarithms and making a rearrangement give \eqref{eq:cts-LogCap}.

	Taking limits of the quotients in \eqref{eq:cts-LogCap} shows that the functions
	\begin{equation}\label{eq:capTheta-fcts}
		s \mapsto \liminf_{r\to 0} \frac{\log \capTheta{r}{E} }{-\log r} \mAnd s \mapsto  \limsup_{r\to 0} \frac{\log \capTheta{r}{E} }{-\log r}
	\end{equation}
	are strictly decreasing and continuous on $ [0, d] $. Since $ \kerTheta[0]{r}{\xWy} = \kerZ{r}{\xWy} \leq 1 $, we have $ \capTheta[0]{r}{E} \geq 1 $. This implies
	\begin{equation}\label{eq:capTheta-0}
		\liminf_{r\to 0} \frac{\log \capTheta[0]{r}{E} }{-\log r} \geq 0 \mAnd \limsup_{r\to 0} \frac{\log \capTheta[0]{r}{E} }{-\log r} \geq 0.
	\end{equation}
	On the other hand, since $ r^{-d} \kerZ{r}{\xWy} = \prod_{k=1}^{d} \min\{1/r, 1/\alpha_{k}(T_{\xWy})\} \geq 1 $ for $ 0 < r \leq 1 $, we have
	\begin{align*}
		\kerTheta[d]{r}{\xWy} = \max_{r^{1/\theta}\leq u\leq r } u^{-d} \kerZ{u}{\xWy} \geq 1.
	\end{align*}
	Hence $ \capTheta[d]{r}{E} \leq 1 $, and so
	\begin{equation}\label{eq:capTheta-d}
		\liminf_{r\to 0} \frac{\log \capTheta[d]{r}{E} }{-\log r} \leq 0 \mAnd \limsup_{r\to 0} \frac{\log \capTheta[d]{r}{E} }{-\log r} \leq 0.
	\end{equation}
	Based on \eqref{eq:capTheta-0} and \eqref{eq:capTheta-d}, the proof is completed by the continuity and strict monotonicity of the functions in \eqref{eq:capTheta-fcts}.
\end{proof}

\section{Proof of \autoref{thm:main}\ref{itm:UB}} \label{sec:UB}

We begin with a simple geometric observation.
\begin{lemma}[{\cite[Lemma 3.2]{FengEtAl2022}}]\label{lem:kerZ-counting}
	Let $ \mathbf{a} \in \euclid[dm] $. Then
	\begin{equation*}
		N_{r}\left( \pia([I]) \right) \lesssim_{d,\mathbf{a}} \frac{1}{\kerZ{r}{I}} \mFor  I \in \Sigma^{\ast} \text{ and } r > 0,
	\end{equation*}
	where $ N_{r}(F) $ denotes the minimal number of sets with diameter $ r $ needed to cover any bounded set $ F \subset \euclid $.
\end{lemma}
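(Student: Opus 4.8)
The plan is to recognize the projected cylinder $\pia([I])$ as an affine image of the self-affine set $K^{\bfa}$ with linear part $T_I$, and then to bound its covering number by the elementary count for an ellipsoid. First I would establish the identity $\pia([I]) = f_I^{\bfa}(K^{\bfa})$, where $f_I^{\bfa} := f_{i_1}^{\bfa}\circ\cdots\circ f_{i_n}^{\bfa}$ for $I = i_1\cdots i_n$. Indeed, for $\mathbf{j} = j_1 j_2\cdots \in \Sigma$, the definition \eqref{eq:def-coding} together with the continuity of the (affine) map $f_I^{\bfa}$ gives
\begin{equation*}
\pia(I\mathbf{j}) = \lim_{k\to\infty} f_I^{\bfa}\circ f_{j_1}^{\bfa}\circ\cdots\circ f_{j_k}^{\bfa}(0) = f_I^{\bfa}\Bigl(\lim_{k\to\infty} f_{j_1}^{\bfa}\circ\cdots\circ f_{j_k}^{\bfa}(0)\Bigr) = f_I^{\bfa}\bigl(\pia(\mathbf{j})\bigr),
\end{equation*}
whence $\pia([I]) = f_I^{\bfa}(\pia(\Sigma)) = f_I^{\bfa}(K^{\bfa})$. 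Since $f_I^{\bfa}$ is affine with linear part $T_I$, there is a vector $v_I \in \euclid$ with $\pia([I]) = T_I(K^{\bfa}) + v_I$.

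Next, since $K^{\bfa}$ is compact it is contained in a closed ball $B(0, R_{\bfa})$, where $R_{\bfa} := \max\{1, \sup_{x\in K^{\bfa}}\abs{x}\}$ depends only on $\bfa$ and on the fixed matrices $T_1,\ldots,T_m$; hence $\pia([I]) \subset T_I\bigl(B(0,R_{\bfa})\bigr) + v_I$. As translations leave $N_r$ unchanged, it suffices to bound $N_r\bigl(T_I(B(0,R_{\bfa}))\bigr)$. By the singular value decomposition, $T_I(B(0,R_{\bfa}))$ is the image under an orthogonal map of an ellipsoid with semi-axes $R_{\bfa}\alpha_1(T_I) \ge \cdots \ge R_{\bfa}\alpha_d(T_I)$; an orthogonal map preserves diameters and hence $N_r$, and that ellipsoid is contained in the box $\prod_{k=1}^d [-R_{\bfa}\alpha_k(T_I),\, R_{\bfa}\alpha_k(T_I)]$, which can be tiled by $\prod_{k=1}^d \bigl\lceil 2\sqrt{d}\,R_{\bfa}\alpha_k(T_I)/r \bigr\rceil$ axis-parallel cubes of side $r/\sqrt d$, each of diameter $r$. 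Using $\lceil t\rceil \le 2\max\{1,t\}$ in each coordinate and $2\sqrt d\,R_{\bfa}\ge 1$, this product is at most
\begin{equation*}
(4\sqrt d\, R_{\bfa})^d \prod_{k=1}^d \max\{1, \alpha_k(T_I)/r\} = \frac{(4\sqrt d\, R_{\bfa})^d}{\kerZ{r}{I}},
\end{equation*}
which is $\lesssim_{d,\bfa} 1/\kerZ{r}{I}$ as claimed, with implied constant $(4\sqrt d\,R_{\bfa})^d$.

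There is no essential obstacle here; the two points deserving a little care are the telescoping identity $\pia([I]) = f_I^{\bfa}(K^{\bfa})$ (which reduces everything to linear algebra on the image of a ball) and verifying that the covering constant depends only on $d$ and on $\bfa$ — through $\diam K^{\bfa}$ — and not on the word $I$ or on the scale $r$. Note that when $r \ge \alpha_1(T_I)$ one has $\kerZ{r}{I} = 1$, so the estimate merely records that $N_r(\pia([I])) \lesssim_{d,\bfa} 1$, consistent with $K^{\bfa}$ being bounded.
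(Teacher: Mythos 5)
Your argument is correct, and it is the standard proof of this covering estimate (the paper merely cites \cite[Lemma~3.2]{FengEtAl2022} and does not reprove it). The two key steps — the telescoping identity $\pia([I]) = f_I^{\bfa}(K^{\bfa})$, which reduces the problem to covering the affine image of a fixed ball, and the SVD/box-tiling count for $T_I(B(0,R_{\bfa}))$ — are exactly what one does. The bookkeeping $\lceil t\rceil \leq 2\max\{1,t\}$ together with $2\sqrt{d}\,R_{\bfa}\geq 1$ correctly absorbs the lower-order terms, giving $\prod_k 2\max\{1, 2\sqrt d\,R_{\bfa}\alpha_k(T_I)/r\} \leq (4\sqrt d\, R_{\bfa})^d \prod_k \max\{1,\alpha_k(T_I)/r\} = (4\sqrt d\, R_{\bfa})^d / \kerZ{r}{I}$, with constant depending only on $d$ and $\bfa$ (through $\diam K^{\bfa}$), uniformly in $I$ and $r$, as required. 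The edge cases $I=\varnothing$ and $r\geq \alpha_1(T_I)$ are handled automatically since then $\kerZ{r}{I}$ reduces appropriately. No gaps.
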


Next we deduce an upper bound on $ \sumTheta{r}{\pia(E)} $ from a lower bound on the potentials of a measure with respect to the kernel $ \kerTheta{r}{\xWy} $.

\begin{proposition}\label{lem:thetaDim-CoverSumUB}
	Let $ 0 \leq s \leq  d $, $ 0 < \theta \leq 1 $, and $  \mathbf{a} \in \euclid[dm] $. Let $ E \subset \Sigma $ be a non-empty compact set.  If for $ 0 < r \leq 1 $ there exist $ \mu \in \calP(E) $ and $ \gamma > 0 $ such that
	\begin{equation*}
		\int \kerTheta{r}{\xWy} \, d\mu(y) \geq \gamma \mFor \text{all } x \in E,
	\end{equation*}
	then for all sufficiently small $ r > 0 $,
	\begin{equation*}
	\sumTheta{r}{\pia(E)} \lesssim_{d,\bfa,\theta}   \frac{\log(1/r)}{\gamma}.
	\end{equation*}
\end{proposition}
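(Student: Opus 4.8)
I would first turn the estimate into a purely symbolic statement: it suffices to produce a countable antichain $\mathcal{A}\subset\Sigma^{\ast}$ with $E\subseteq\Union_{I\in\mathcal{A}}[I]$ and $\sum_{I\in\mathcal{A}}1/\kerTheta{r}{I}\lesssim_{d,\theta}\log(1/r)/\gamma$. Indeed, given such an $\mathcal{A}$, for each $I\in\mathcal{A}$ choose $u_{I}\in[r^{1/\theta},r]$ at which the maximum defining $\kerTheta{r}{I}$ in \eqref{eq:def-kerTheta} is attained, so that $u_{I}^{s}/\kerZ{u_{I}}{I}=1/\kerTheta{r}{I}$; by \autoref{lem:kerZ-counting} one can cover $\pia([I])$ by $\lesssim_{d,\bfa}1/\kerZ{u_{I}}{I}$ sets of diameter $u_{I}$, and since $u_{I}\in[r^{1/\theta},r]$ the union of these covers is admissible in the definition of $\sumTheta{r}{\pia(E)}$, giving
\[
	\sumTheta{r}{\pia(E)}\;\lesssim_{d,\bfa}\;\sum_{I\in\mathcal{A}}\frac{u_{I}^{s}}{\kerZ{u_{I}}{I}}\;=\;\sum_{I\in\mathcal{A}}\frac{1}{\kerTheta{r}{I}}.
\]

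To build $\mathcal{A}$ I would feed the potential hypothesis into an Abel summation along each branch. Fix $x\in E$ and set $a_{n}:=\kerTheta{r}{x|n}$; then $a_{0}=\kerTheta{r}{\varnothing}=r^{d-s}$, the sequence $(a_{n})$ is nondecreasing and stabilises at $r^{-s/\theta}$ (the singular values of $T_{x|n}$ vanish as $n\to\infty$), and splitting $\mu$ according to the length of $\xWy$ and summing by parts yields
\[
	\gamma\;\le\;\int\kerTheta{r}{\xWy}\,d\mu(y)\;=\;a_{0}+\sum_{n\ge1}(a_{n}-a_{n-1})\,\mu([x|n]).
\]
Bounding the $n$-th summand by $\bigl(\sup_{m\ge1}\mu([x|m])\,a_{m}\bigr)(1-a_{n-1}/a_{n})$ and using the telescoping inequality $\sum_{n\ge1}(1-a_{n-1}/a_{n})\le\sum_{n\ge1}\log(a_{n}/a_{n-1})=\log(a_{\infty}/a_{0})=(s/\theta+d-s)\log(1/r)\le(d/\theta)\log(1/r)$ shows that, unless $\gamma\le2r^{d-s}$, one has $\sup_{n\ge1}\mu([x|n])\,\kerTheta{r}{x|n}\ge c_{1}\gamma/\log(1/r)$ for some $c_{1}=c_{1}(d,\theta)>0$.

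Now define $n(x)$ to be the least $n\ge0$ with $\mu([x|n])\,\kerTheta{r}{x|n}\ge c\gamma/\log(1/r)$, for a suitable $c=c(d,\theta)\in(0,c_{1})$; the previous paragraph (and the value $a_{0}=r^{d-s}$ in the exceptional range $\gamma\le2r^{d-s}$) guarantees $n(x)<\infty$ for all $x\in E$ once $r$ is small. Because this defining condition depends only on the finite word $x|n$ and $n(x)$ is chosen minimal, $\mathcal{A}:=\{x|n(x):x\in E\}$ is an antichain, and trivially $E\subseteq\Union_{I\in\mathcal{A}}[I]$. For $I\in\mathcal{A}$ we have $1/\kerTheta{r}{I}\le\mu([I])\log(1/r)/(c\gamma)$, so, the cylinders $\{[I]:I\in\mathcal{A}\}$ being pairwise disjoint,
\[
	\sum_{I\in\mathcal{A}}\frac{1}{\kerTheta{r}{I}}\;\le\;\frac{\log(1/r)}{c\gamma}\sum_{I\in\mathcal{A}}\mu([I])\;\le\;\frac{\log(1/r)}{c\gamma},
\]
which with the first paragraph completes the proof. (If $n(x)\equiv0$, so $\mathcal{A}=\{\varnothing\}$, the left-hand side equals $r^{s-d}$, which is still $\lesssim\log(1/r)/\gamma$ precisely because then $r^{d-s}\ge c\gamma/\log(1/r)$; this is the degenerate case $\gamma\lesssim r^{d-s}$.)

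The main obstacle is the reduction in the first paragraph together with the need to lose only a \emph{single} factor $\log(1/r)$. The naive route — covering $\pia([I])$ with $I$ ranging over the fine antichain $\{\alpha_{1}(T_{I})\approx u\}$ — fails here because the number of such cylinders meeting $E$ is not controlled by the kernels $\kerZ{u}{\cdot}$ without a separation hypothesis such as $\|T_{j}\|<1/2$, which part \ref{itm:UB} does not assume; the stopping time in the third paragraph circumvents this by letting each branch select its own cutoff cylinder and covering scale. The single logarithm then comes from the telescoping bound on $\sum_{n}(1-a_{n-1}/a_{n})$, which is more efficient than discretising the range $[r^{1/\theta},r]$ of admissible scales and pigeonholing (that would cost a second logarithm).
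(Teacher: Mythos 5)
Your proof is correct. You reach the same pivotal inequality the paper does — namely that one can produce an antichain $\mathcal{A}\subset\Sigma^{*}$ covering $E$ with $\sum_{I\in\mathcal{A}}1/\kerTheta{r}{I}\lesssim\log(1/r)/\gamma$, after which \autoref{lem:kerZ-counting} applied at the scale $u_{I}$ realising the maximum in \eqref{eq:def-kerTheta} finishes the job exactly as in the paper — but your derivation of the stopping cylinder is genuinely different. The paper observes that the potential sum can be truncated at level $\ell(x)\leq\ell:=\lceil(1/\theta)\log r/\log\alpha_{+}\rceil$ (where $\alpha_{+}=\max_{j}\norm{T_{j}}$), drops the negative cross terms to get $\gamma\leq\sum_{n=0}^{\ell(x)}\kerTheta{r}{x|n}\mu(x|n)$, and then pigeonholes over the $\ell+1$ summands; the factor $\log(1/r)$ enters through the bound $\ell+1\lesssim\log(1/r)$, whose implicit constant depends on $\alpha_{+}$. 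You instead keep the Abel-summed form $\gamma\leq a_{0}+\sum_{n\geq1}(a_{n}-a_{n-1})\mu(x|n)$ and bound the weighted telescoping sum by $(\sup_{n}a_{n}\mu(x|n))\cdot\log(a_{\infty}/a_{0})$, which yields the cleaner constant $s/\theta+d-s\leq d/\theta$, independent of $\alpha_{+}$. (One small check worth making explicit: the monotonicity $a_{n-1}\leq a_{n}$ used in the telescoping step does hold, since $\alpha_{k}(T_{x|(n+1)})\leq\alpha_{k}(T_{x|n})\norm{T_{x_{n+1}}}\leq\alpha_{k}(T_{x|n})$ for every $k$, so $\kerZ{u}{x|n}$ is nondecreasing in $n$ for each fixed $u$.) A further minor structural difference is that your minimal stopping time makes $\mathcal{A}$ an antichain automatically, whereas the paper chooses $n(x)$ by pigeonhole and then extracts a disjoint subcover a posteriori via the net structure of $\Sigma$; the two are interchangeable here. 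Overall, your route is slightly sharper and self-contained, and both close the argument identically.
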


We adapt some ideas from the proof of \cite[Lemma 4.4]{BurrellEtAl2021}. However, instead of only chopping the balls with too large diameters, we cover each of the balls having relatively large measure with sets of an appropriate diameter provided by the kernel $ \kerTheta{r}{\xWy} $.

\begin{proof}
	
	Let $ x \in E $. Set $ \ell(x) \coloneqq \min \{ n \in \bbN \colon \alpha_{1}(T_{x|n}) \leq r^{1/\theta} \} $. Write $ \alpha_{+} := \max_{1\leq j \leq m} \norm{T_{j}} $ and $ \ell := \lceil (1/\theta) \log r / \log \alpha_{+} \rceil $. Then $ \ell(x) \leq \ell $ since
	\begin{equation*}
		\alpha_{1}(T_{x|\ell}) = \norm{T_{x|\ell}} \leq \norm{T_{x_{1}}}\cdots\norm{T_{x_{\ell}}} \leq \alpha_{+}^{\ell} \leq r^{1/\theta}.
	\end{equation*}
	For $ n \geq \ell(x) $ and $ u \geq r^{1/\theta} $, it follows from \eqref{eq:def-kerZ} that $ \kerZ{u}{x|n} = 1 $, and so
	\begin{equation}\label{eq:thetaDim-kernel-const}
		\kerTheta{r}{x|n} = \max_{r^{1/\theta}\leq u \leq r } u^{-s}  = r^{-s/\theta}.
	\end{equation}
	This implies
	\begin{align*}
		\gamma & \leq \int \kerTheta{r}{\xWy} \, d\mu(y)                                                                                                                                         \\
		       & = \sum_{n=0}^{\ell(x)-1} \kerTheta{r}{x|n} \mu\{y\in\Sigma\colon\abs{\xWy} = n \}                                                                                           \\
		       & \qquad\qquad\qquad + r^{-s/\theta} \left ( \sum_{n=\ell(x)}^{\infty}  \mu\{y\in\Sigma\colon\abs{\xWy} = n \} + \mu(\{x\}) \right ) & \text{by } \eqref{eq:thetaDim-kernel-const} \\ & = \sum_{n=0}^{\ell(x)-1} \kerTheta{r}{x|n} \left [\mu(x|n)-\mu(x|(n+1))\right] + r^{-s/\theta} \mu(x|\ell(x)) \\
		       & \leq \sum_{n=0}^{\ell(x)} \kerTheta{r}{x|n} \mu(x|n)                                                                              & \text{by }\eqref{eq:thetaDim-kernel-const}.
	\end{align*}
	Hence there exists an integer $ n(x) \in [0, \ell(x)] $ such that
	\begin{equation}\label{eq:tmp121}
		\kerTheta{r}{x|n(x)} \mu(x|n(x)) \geq \frac{\gamma}{\ell(x) + 1 } \geq \frac{\gamma}{\ell + 1 }.
	\end{equation}
	By \eqref{eq:def-kerTheta}, there exists some $ \delta(x) \in [r^{1/\theta}, r] $ such that
	$ \kerTheta{r}{x|n(x)} =  \delta(x)^{-s} \kerZ{\delta(x)}{x|n(x)} $. Then \eqref{eq:tmp121} implies
	\begin{equation}\label{eq:tmp4}
		\delta(x)^{-s} \kerZ{\delta(x)}{x|n(x)} \mu(x|n(x)) \geq \frac{\gamma}{\ell + 1 }.
	\end{equation}

	Since $ \{ [x|n(x)]\}_{x\in E} $ is a cover of $ E $ and $ n(x) \leq \ell $, we can find a disjoint subcover $ \Gamma $ by the net structure of $ \Sigma $. By \eqref{eq:tmp4}, for each $ I \in \Gamma $ there exists some $ \delta_{I} \in [r^{1/\theta}, r] $ such that
	\begin{equation}\label{eq:counting<measure}
		\frac{\delta_{I}^{s}}{\kerZ{\delta_{I}}{I}} \leq \frac{\ell + 1}{\gamma} \mu(I).
	\end{equation}
	 Clearly $ \{ \pia([I]) \}_{I\in\Gamma} $ is a cover of $ \pia (E) $ since $ \Gamma $ covers $ E $. By \autoref{lem:kerZ-counting}, we can find for each $ \pia([I]) $ a cover $ \calD_{I} $ consisting of sets with diameter $ \delta_{I} \in [r^{1/\theta}, r] $ such that
	\begin{equation}\label{eq:chopping-number}
		\# \calD_{I} \lesssim_{d,\bfa} \frac{1}{\kerZ{\delta_{I}}{I}}.
	\end{equation}
	Then $ \Union_{I\in\Gamma} \calD_{I} $ is a cover of $ \pia(E) $ with sets of diameters in $  [r^{1/\theta}, r] $. Finally,
	\begin{align*}
	\sumTheta{r}{\pia(E)} \leq \sum_{I\in\Gamma} \sum_{B\in \calD_{I}} \abs{B}^{s}  & = \sum_{I\in\Gamma} 	\# \calD_{I} \cdot \delta_{I}^{s} \\ & \lesssim_{d,\bfa} \sum_{I\in\Gamma} 	\frac{\delta_{I}^{s}}{\kerZ{\delta_{I}}{I}} & \text{by } \eqref{eq:chopping-number} \\ & \leq  \sum_{I\in\Gamma} \frac{\ell + 1}{\gamma} \mu(I) & \text{by } \eqref{eq:counting<measure}   \\ & = \frac{\ell+1}{\gamma}.
	\end{align*}
	This finishes the proof since $ \ell + 1 \lesssim_{\theta} \log(1/r )$ when $ r $ is small.
\end{proof}

Now we are ready to prove \ref{itm:UB} of \autoref{thm:main}.

\begin{proof}[Proof of \autoref{thm:main}\ref{itm:UB}]
	Since the \tThetaDims\ and capacity dimensions of a set remain the same after taking closure, without loss of generality we can assume that $ E $ is compact.

	Let $ 0 \leq s \leq d $. For $ 0 < r \leq 1 $, by \autoref{lem:equil-meas} there exists an equilibrium measure $ \mu_{r} \in \calP(E) $ for the kernel $ \kerTheta{r}{\xWy} $ such that
	\begin{equation*}
		\int \kerTheta{r}{\xWy} \, d\mu_{r}(y) \geq \frac{1}{\capTheta{r}{E}} \quad \text{ for all } x\in E.
	\end{equation*}
	Applying \autoref{lem:thetaDim-CoverSumUB} with $ \mu = \mu_{r} $ gives that for all sufficiently small $ r > 0 $,
	\begin{equation*}
		\sumTheta{r}{\pia(E)} \lesssim \log(1/r) \cdot \capTheta{r}{E}.
	\end{equation*}
	By taking logarithms and limits,
	\begin{equation}\label{eq:nonInt-liminf}
		\liminf_{r\to 0} \frac{\log \sumTheta{r}{\pia(E)} }{-\log r} \leq \liminf_{r\to 0} \frac{\log \capTheta{r}{E} }{- \log r}
	\end{equation}
	and
	\begin{equation}\label{eq:nonInt-limsup}
		\limsup_{r\to 0} \frac{\log \sumTheta{r}{\pia (E)} }{- \log r} \leq \limsup_{r\to 0} \frac{\log \capTheta{r}{E} }{ - \log r}.
	\end{equation}
	Hence \autoref{def:thetaDim} and \autoref{def:theta-CapDim} show that
	\begin{equation*}
		\lDim{\theta} \pia (E) \leq \lDim{C,\theta} E \quad\text{  and }\quad  \uDim{\theta} \pia( E) \leq \uDim{C,\theta} E.
	\end{equation*}
	This completes the proof of \autoref{thm:main}\ref{itm:UB}.
\end{proof}

\section{Proof of \autoref{thm:main}\ref{itm:LB}} \label{sec:LB}

We begin with a lemma modified from \cite[Lemma 5.4]{BurrellEtAl2021}, which allows us to control $ \sumTheta{r}{\pia(E)} $ from below using the upper bounds on the potentials with respect to the kernel $ \kerPsiTheta{r}{s}{\xyAbs} $. For $ 0 \leq s \leq d $, $ 0 < \theta \leq 1 $ and $ 0 < r \leq 1 $, define
\begin{equation}\label{eq:def-truncatedKernel}
	\kerPsiTheta{r}{s}{\Delta} \coloneqq \begin{cases}
		r^{-s/\theta} & \text{if } 0 \leq \Delta \leq r^{1/\theta} \\
		\Delta^{-s}   & \text{if } r^{1/\theta} < \Delta \leq r    \\
		0             & \text{if }\Delta > r
	\end{cases}\quad \text{ for } \Delta \geq 0.
\end{equation}
Note that $ \kerPsiTheta{r}{s}{\cdot} $ is non-increasing.

\begin{lemma} \label{lem:thetaDim-CoverSumLB}
	Let $ 0 \leq s \leq d $, $ 0 < \theta \leq 1 $, and $ 0 < r \leq 1 $. Let $ E \subset \euclid[d]$ be a non-empty compact set. If there exist $ \mu \in \calP(E) $ and a Borel subset $ F \subset E $, and $ \gamma > 0 $ such that
	\begin{equation}\label{eq:thetaDim-potential-UB}
		\int \kerPsiTheta{r}{s}{\xyAbs} \, d\mu(y) \leq \gamma \mFor \text{all } x\in F,
	\end{equation}
	then
	\begin{equation*}
		\sumTheta{r}{E} \geq \frac{\mu(F)}{\gamma}.
	\end{equation*}
\end{lemma}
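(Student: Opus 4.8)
The plan is a routine mass\Ndash distribution (potential\Ndash theoretic) lower bound for covering sums, so that the only real work is bookkeeping. First I would fix an arbitrary countable cover $ \{U_{i}\} $ of $ E $ with $ r^{1/\theta} \leq \abs{U_{i}} \leq r $ for all $ i $; by the definition of $ \sumTheta{r}{E} $ it suffices to show $ \sum_{i} \abs{U_{i}}^{s} \geq \mu(F)/\gamma $ and then pass to the infimum over such covers. To make $ \mu(U_{i}) $ meaningful I would replace each $ U_{i} $ by its closure $ \overline{U_{i}} $, which is Borel and has the same diameter (equivalently, work with outer measure throughout). One may also assume every $ U_{i} $ meets $ E $, and discard those $ U_{i} $ disjoint from $ F $.

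For each retained index $ i $ I would choose a point $ x_{i} \in U_{i} \cap F $. If $ y \in U_{i} $ then $ \abs{x_{i}-y} \leq \abs{U_{i}} \leq r $, so monotonicity of $ \kerPsiTheta{r}{s}{\cdot} $ gives $ \kerPsiTheta{r}{s}{\abs{x_{i}-y}} \geq \kerPsiTheta{r}{s}{\abs{U_{i}}} $, and reading off \eqref{eq:def-truncatedKernel} in the range $ r^{1/\theta} \leq \abs{U_{i}} \leq r $ yields $ \kerPsiTheta{r}{s}{\abs{U_{i}}} \geq \abs{U_{i}}^{-s} $ (with equality when $ \abs{U_{i}} > r^{1/\theta} $, and $ r^{-s/\theta} \geq \abs{U_{i}}^{-s} $ when $ \abs{U_{i}} \leq r^{1/\theta} $). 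Applying the hypothesis \eqref{eq:thetaDim-potential-UB} at $ x = x_{i} \in F $ and restricting the integral to $ U_{i} $,
\begin{equation*}
	\gamma \geq \int \kerPsiTheta{r}{s}{\abs{x_{i}-y}} \, d\mu(y) \geq \int_{U_{i}} \kerPsiTheta{r}{s}{\abs{U_{i}}} \, d\mu(y) = \mu(U_{i}) \, \kerPsiTheta{r}{s}{\abs{U_{i}}} \geq \mu(U_{i}) \abs{U_{i}}^{-s},
\end{equation*}
hence $ \abs{U_{i}}^{s} \geq \mu(U_{i})/\gamma \geq \mu(U_{i}\cap F)/\gamma $.

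Finally I would sum over the retained indices; since the sets $ U_{i}\cap F $ cover $ F $, countable subadditivity of $ \mu $ gives $ \sum_{i} \abs{U_{i}}^{s} \geq \gamma^{-1}\sum_{i} \mu(U_{i}\cap F) \geq \mu(F)/\gamma $, and taking the infimum over admissible covers completes the proof. There is no substantial obstacle: the only points needing care are the measurability of the cover elements (handled by passing to closures) and the elementary estimate $ \kerPsiTheta{r}{s}{\abs{U_{i}}} \geq \abs{U_{i}}^{-s} $ on both branches of the piecewise definition, which is exactly why $ \kerPsiTheta{r}{s}{\cdot} $ is flattened below $ r^{1/\theta} $ and cut to $ 0 $ above $ r $.
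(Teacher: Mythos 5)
Your proposal is correct and follows essentially the same potential-theoretic mass distribution argument as the paper. The only cosmetic difference is that the paper first derives the ball estimate $\delta^{s} \geq \mu(B(x,\delta))/\gamma$ for $r^{1/\theta} \leq \delta \leq r$ and then applies it via $U_{i} \subset B(x_{i}, \abs{U_{i}})$, whereas you integrate the kernel bound directly over $U_{i}$ using $\kerPsiTheta{r}{s}{\abs{x_{i}-y}} \geq \kerPsiTheta{r}{s}{\abs{U_{i}}} \geq \abs{U_{i}}^{-s}$; these two steps are logically interchangeable.
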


We can view \autoref{lem:thetaDim-CoverSumLB} as a potential-theoretic version of the mass distribution principle (see \cite{Falconer2003}). 


\begin{proof}
	 Let $ x \in F $ and $r^{1/\theta} \leq \delta \leq r $. It follows from \eqref{eq:def-truncatedKernel} that $ \kerPsiTheta{r}{s}{\xyAbs} \geq \delta^{-s}$ for $ y \in B(x, \delta) $. Then \eqref{eq:thetaDim-potential-UB} implies that
	\begin{equation*}
		\gamma \geq \int \kerPsiTheta{r}{s}{\xyAbs} \, d\mu(y) \geq \int_{B(x,\delta)}  \kerPsiTheta{r}{s}{\xyAbs} \, d\mu(y) \geq \frac{\mu(B(x,\delta))}{\delta^{s}},
	\end{equation*}
	thus
	\begin{equation}\label{eq:thetaDim-mdp}
		\delta^{s} \geq \frac{\mu(B(x,\delta))}{\gamma}.
	\end{equation}
	Let $ \{U_{i}\} $ be a cover of $ F $ with $ r^{1/\theta} \leq \abs{U_{i}} \leq r $. Without loss of generality we can pick some $ x_{i} \in F\intersect U_{i} $ for each $ i $. Then $ U_{i} \subset B(x_{i}, \abs{U_{i}}) $ for each $ i $. Hence by \eqref{eq:thetaDim-mdp},
	\begin{equation*}
		\sum_{i} \abs{U_{i}}^{s} \geq  \frac{1}{\gamma} \sum_{i} \mu(B(x_{i},\abs{U_{i}})) \geq \frac{\mu(F)}{\gamma}.
	\end{equation*}
	Taking infima over all such covers gives
	\begin{equation*}
		\sumTheta{r}{E} \geq 	\sumTheta{r}{F} \geq \frac{\mu(F)}{\gamma}.
	\end{equation*}
	This finishes the proof.
\end{proof}

The following lemma is contained in the proof of \cite[Lemma 5.1]{JordanEtAl2007} which verifies the so-called \textit{self-affine transversality} in \eqref{eq:affine-trans}. 


\begin{lemma}[{\cite[Lemma 5.1]{JordanEtAl2007}}] \label{lem:paramMeasure}
	Assume $ \norm{T_{j}} < 1/2 $ for $ 1 \leq j \leq m $. Let $ \rho > 0 $. Then for $ x, y \in \Sigma $ and $ r > 0 $,
	\begin{equation}\label{eq:affine-trans}
		\calL^{dm} \{ \mathbf{a}\in B_{\rho} \colon \DeltaPiXY \leq r \} \lesssim_{\rho,d} \kerZ{r}{\xWy},
	\end{equation}
	where $ B_{\rho} $ denotes the closed ball in $ \euclid[dm] $ centered at $ 0 $ with radius $ \rho $.
\end{lemma}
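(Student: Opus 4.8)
The plan is to reduce the estimate to the case where $x$ and $y$ already disagree in their first symbols, and to settle that case by a Fubini argument whose only non-routine ingredient is a linear-algebra bound that uses the hypothesis $\norm{T_{j}}<1/2$.

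\textbf{Step 1 (reduction).} If $x=y$ then $\kerZ{r}{\xWy}=1$ and the left-hand side is at most $\calL^{dm}(B_{\rho})$, a constant depending only on $\rho$ and $d$. So assume $x\ne y$, put $w:=\xWy$, and let $\xi,\eta\in\Sigma$ be the tails of $x,y$ after the common prefix $w$, so that $\xi_{1}\ne\eta_{1}$. Iterating the relation $\pia(z)=f_{z_{1}}^{\bfa}(\pia(\sigma z))$ (with $\sigma$ the left shift) down the finite word $w$ gives $\pia(x)=T_{w}\pia(\xi)+b$ and $\pia(y)=T_{w}\pia(\eta)+b$ with the same $b=b^{\bfa}_{w}\in\euclid$, hence $\pia(x)-\pia(y)=T_{w}(\pia(\xi)-\pia(\eta))$. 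Thus $\DeltaPiXY\le r$ is equivalent to $\pia(\xi)-\pia(\eta)\in\mathcal{E}$, where $\mathcal{E}:=\{v\in\euclid\colon\abs{T_{w}v}\le r\}$ is an ellipsoid with semi-axes $r/\alpha_{k}(T_{w})$. For $\bfa\in B_{\rho}$ the vector $\pia(\xi)-\pia(\eta)$ lies in $B(0,R_{\rho})$ with $R_{\rho}:=2\rho/(1-\alpha_{+})$ and $\alpha_{+}:=\max_{j}\norm{T_{j}}$; estimating $\mathcal{E}\cap B(0,R_{\rho})$ coordinate-wise in the singular basis of $T_{w}$ gives $\calL^{d}(\mathcal{E}\cap B(0,R_{\rho}))\lesssim_{\rho,d}\prod_{k=1}^{d}\min\{1,r/\alpha_{k}(T_{w})\}=\kerZ{r}{\xWy}$. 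So it remains to prove that, when $\xi_{1}\ne\eta_{1}$, the map $\bfa\mapsto\Psi(\bfa):=\pia(\xi)-\pia(\eta)$ pushes $\calL^{dm}\vert_{B_{\rho}}$ forward to a measure that is $\lesssim_{\rho,d}\calL^{d}$.

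\textbf{Step 2 (Fubini).} Put $p:=\xi_{1}$, $q:=\eta_{1}$ (distinct). Perform on $\euclid[dm]$ the constant-Jacobian linear change of coordinates replacing the blocks $a_{p},a_{q}$ by $u:=a_{p}-a_{q}$ and $s:=a_{p}+a_{q}$, keeping the other $a_{k}$. Using $\pia(\xi)=T_{p}\pia(\sigma\xi)+a_{p}$ and the analogue for $\eta$, one gets $\Psi=u+G(\bfa)$ with $G:=T_{p}\pia(\sigma\xi)-T_{q}\pia(\sigma\eta)$. Freezing all new coordinates but $u$, the map $u\mapsto\Psi$ is affine with constant linear part $L_{0}:=I+\partial_{u}G$. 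If $L_{0}$ is invertible with $\norm{L_{0}^{-1}}\le C_{*}$ for some $C_{*}$ depending only on $T_{1},\dots,T_{m}$, then for each value of the frozen coordinates the set $\{u\colon\Psi\in A\}$ is an affine copy of $A$ with $\calL^{d}$-measure $\le C_{*}^{d}\calL^{d}(A)$; integrating over the bounded range of the frozen coordinates and absorbing the Jacobian yields the pushforward bound of Step 1, completing the proof.

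\textbf{Step 3 (the key bound; expected main obstacle).} It remains to show $\norm{\partial_{u}G}<1$. In the new coordinates $\partial_{u}=\tfrac12(\partial_{a_{p}}-\partial_{a_{q}})$, so $\partial_{u}G=\tfrac12T_{p}(\partial_{a_{p}}-\partial_{a_{q}})\pia(\sigma\xi)-\tfrac12T_{q}(\partial_{a_{p}}-\partial_{a_{q}})\pia(\sigma\eta)$. Since $\pia(\sigma\xi)=\sum_{l\ge1}T_{(\sigma\xi)|(l-1)}\,a_{(\sigma\xi)_{l}}$, the matrix $(\partial_{a_{p}}-\partial_{a_{q}})\pia(\sigma\xi)=\sum_{l:\,(\sigma\xi)_{l}=p}T_{(\sigma\xi)|(l-1)}-\sum_{l:\,(\sigma\xi)_{l}=q}T_{(\sigma\xi)|(l-1)}$ is a signed sum over \emph{disjoint} index sets — each $l$ satisfies at most one of $(\sigma\xi)_{l}=p$, $(\sigma\xi)_{l}=q$ — so its norm is at most $\sum_{l\ge1}\alpha_{+}^{l-1}=(1-\alpha_{+})^{-1}$, and the same holds with $\eta$ in place of $\xi$. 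Hence $\norm{\partial_{u}G}\le\tfrac12\alpha_{+}(1-\alpha_{+})^{-1}+\tfrac12\alpha_{+}(1-\alpha_{+})^{-1}=\alpha_{+}/(1-\alpha_{+})<1$, the last step being exactly the hypothesis $\alpha_{+}<1/2$; therefore $L_{0}=I+\partial_{u}G$ is invertible with $\norm{L_{0}^{-1}}\le(1-\alpha_{+})/(1-2\alpha_{+})=:C_{*}$, which is what Step 2 needs. I expect this to be the heart of the argument: the symmetric reparametrization $(u,s)=(a_{p}-a_{q},\,a_{p}+a_{q})$ is precisely what makes the disjointness of the two index sets visible, and without it — e.g.\ differentiating in $a_{p}$ alone — one only obtains $\norm{\partial_{u}G}\le2\alpha_{+}(1-\alpha_{+})^{-1}$, i.e.\ the weaker condition $\alpha_{+}<1/3$. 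The reduction and the Fubini/ellipsoid bookkeeping are routine once the dependence of all constants on $\rho,d$ (and the fixed data $m$, $\alpha_{+}$) is tracked.
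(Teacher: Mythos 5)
Your proof is correct. The paper does not give an internal proof of this lemma — it imports it verbatim from \cite[Lemma 5.1]{JordanEtAl2007} (the constant $1/2$ traces back to Solomyak, replacing Falconer's original $1/3$) — so there is no in-text argument to compare against, but your reconstruction is faithful to the standard one. In particular you correctly isolate the key step: after reducing via $\pia(x)-\pia(y)=T_{w}(\pia(\xi)-\pia(\eta))$ and bounding the ellipsoid slice $\{v:\abs{T_{w}v}\le r\}\cap B(0,R_{\rho})$ by $\kerZ{r}{\xWy}$, the whole content is that the linear map $\bfa\mapsto\pia(\xi)-\pia(\eta)$ is uniformly expanding along the $d$-plane $u=a_{p}-a_{q}$, and it is precisely the passage from $\partial_{a_p}$ to $\tfrac12(\partial_{a_p}-\partial_{a_q})$ that merges the two tail sums over positions where $p$, resp.\ $q$, occurs into a single sum over \emph{disjoint} index sets, giving $\norm{\partial_{u}G}\le\alpha_{+}/(1-\alpha_{+})<1$ exactly under $\alpha_{+}<1/2$ rather than the $2\alpha_{+}/(1-\alpha_{+})$ one gets from a one-block derivative. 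The ellipsoid volume estimate, the Neumann-series bound $\norm{L_{0}^{-1}}\le(1-\alpha_{+})/(1-2\alpha_{+})$, and the Fubini/affine-slicing bookkeeping with the constant-Jacobian change of variables are all handled correctly, with the $\rho$- and $d$-dependence tracked properly.
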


Next we exploit \autoref{lem:paramMeasure} to relate the integral of $ \bfa \mapsto \kerPsiTheta{r}{s}{\DeltaPiXY} $  to the kernel $ \kerTheta{r}{\xWy} $.

\begin{proposition}\label{lem:thetaDim-paraMeasure}
	Assume $ \norm{T_{j}} < 1/2 $ for $ 1 \leq j \leq m$. Let $ \rho > 0 $, $ 0 \leq s \leq d $ and $ 0 < \theta \leq 1 $. Then for $ x, y \in \Sigma $ and $ 0 < r \leq 1 $,
	\begin{equation}\label{eq:thetaDim-paramMeasure}
		\int_{B_{\rho}} \kerPsiTheta{r}{s}{\DeltaPiXY} \, d\mathbf{a} \lesssim_{\rho, d, \theta} \log(1/r) \kerTheta{r}{\xWy}.
	\end{equation}
\end{proposition}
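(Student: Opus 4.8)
The plan is to use the layer-cake (distribution-function) structure of the non-increasing kernel $\kerPsiTheta{r}{s}{\cdot}$ of \eqref{eq:def-truncatedKernel}, combined with the transversality estimate of \autoref{lem:paramMeasure} applied at a dyadic range of scales. Fix $x,y\in\Sigma$ and $0<r\le1$, write $w:=\xWy$, and read $\DeltaPiXY$ as a function of $\bfa$. Since $\kerPsiTheta{r}{s}{\Delta}$ equals $r^{-s/\theta}$ for $\Delta\le r^{1/\theta}$, equals $\Delta^{-s}$ for $r^{1/\theta}<\Delta\le r$, and vanishes for $\Delta>r$, I would first record the exact splitting
\begin{equation*}
	\int_{B_{\rho}}\kerPsiTheta{r}{s}{\DeltaPiXY}\,d\bfa
	= r^{-s/\theta}\,\calL^{dm}\{\bfa\in B_{\rho}\colon \DeltaPiXY\le r^{1/\theta}\}
	+ \int_{\{\bfa\in B_{\rho}\colon\, r^{1/\theta}<\DeltaPiXY\le r\}}\DeltaPiXY^{-s}\,d\bfa .
\end{equation*}

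For the first summand, \autoref{lem:paramMeasure} at scale $r^{1/\theta}$ bounds the measure by $\lesssim_{\rho,d}\kerZ{r^{1/\theta}}{w}$, so this summand is $\lesssim_{\rho,d}r^{-s/\theta}\kerZ{r^{1/\theta}}{w}\le\kerTheta{r}{w}$, taking $u=r^{1/\theta}$ in the maximum \eqref{eq:def-kerTheta}. For the second summand I would split the range of $\DeltaPiXY$ into dyadic shells: for $n\ge0$ set $A_{n}:=\{\bfa\in B_{\rho}\colon 2^{-n-1}r<\DeltaPiXY\le 2^{-n}r\}$. A shell meets $(r^{1/\theta},r]$ only when $2^{-n}r>r^{1/\theta}$, i.e.\ $n<(1/\theta-1)\log_{2}(1/r)$, so at most $\lesssim_{\theta}\log(1/r)$ shells are relevant (none at all when $\theta=1$), and for each such $n$ the scale $u_{n}:=2^{-n}r$ lies in $[r^{1/\theta},r]$. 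On $A_{n}$ one has $\DeltaPiXY^{-s}\le(2^{-n-1}r)^{-s}=2^{s}u_{n}^{-s}$, while $\calL^{dm}(A_{n})\le\calL^{dm}\{\bfa\in B_{\rho}\colon\DeltaPiXY\le u_{n}\}\lesssim_{\rho,d}\kerZ{u_{n}}{w}$ by \autoref{lem:paramMeasure} at scale $u_{n}$; hence $\int_{A_{n}}\DeltaPiXY^{-s}\,d\bfa\lesssim_{\rho,d}2^{s}u_{n}^{-s}\kerZ{u_{n}}{w}\le2^{d}\kerTheta{r}{w}$, using $s\le d$ and $u_{n}\in[r^{1/\theta},r]$. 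Summing over the $\lesssim_{\theta}\log(1/r)$ relevant shells and adding the first summand gives \eqref{eq:thetaDim-paramMeasure}.

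No step is a genuine obstacle: \autoref{lem:paramMeasure} supplies all of the geometry, and the remaining work is bookkeeping — verifying $u_{n}\in[r^{1/\theta},r]$ and counting shells. The only point needing a word of care is that the decomposition literally produces a factor $1+\log(1/r)$ rather than $\log(1/r)$; since for $r$ bounded away from $0$ the asserted bound is immediate from \autoref{lem:paramMeasure} and the monotonicity of $\kerPsiTheta{r}{s}{\cdot}$, it suffices to argue for small $r$, for which $\log(1/r)\ge1$ absorbs the additive constant.
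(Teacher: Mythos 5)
Your argument is correct, and it rests on the same key input as the paper's proof, namely the self-affine transversality of \autoref{lem:paramMeasure} applied at every scale in the window $[r^{1/\theta},r]$; the two proofs differ only in how those applications are aggregated. The paper evaluates the second summand exactly via the Cavalieri (layer-cake) formula $\int_{0}^{\infty}\calL^{dm}\{\mathbf{a}\in B_{\rho}\colon r^{1/\theta}<\DeltaPiXY\le r,\ \DeltaPiXY^{-s}\ge t\}\,dt$, applies \eqref{eq:affine-trans} inside the $t$-integral, substitutes $u=t^{-1/s}$, and reads off the factor $s(1/\theta-1)\log(1/r)$ from $s\int_{r^{1/\theta}}^{r}u^{-1}\,du$. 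You discretize the same object dyadically: only the shells $A_{n}$ with $2^{-n}r>r^{1/\theta}$ matter, there are $\lesssim_{\theta}1+\log(1/r)$ of them, and each contributes $\lesssim_{\rho,d}2^{d}\kerTheta{r}{\xWy}$ by one application of \autoref{lem:paramMeasure} at scale $u_{n}=2^{-n}r\in[r^{1/\theta},r]$. The discrete route avoids the Fubini/change-of-variables manipulation at the cost of a little bookkeeping, while the continuous route is more compact and produces the explicit constant $s(1/\theta-1)$; conceptually both say the same thing, that the $\log(1/r)$ factor counts the multiplicative length of the range $[r^{1/\theta},r]$. Your closing remark about the factor $1+\log(1/r)$ versus $\log(1/r)$ is well taken and in fact applies equally to the paper's own final $\lesssim_{d,\theta}$ step: at $r=1$ the right side of \eqref{eq:thetaDim-paramMeasure} is $0$ while the left side need not be, so the displayed inequality cannot hold literally for all $0<r\le1$. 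Since the Proposition is only ever invoked along sequences $r_{k}\to0$, the intended reading is for small $r$, where $\log(1/r)\ge1$ absorbs the additive constant; I would phrase the caveat that way rather than claiming the bound is ``immediate'' for $r$ bounded away from $0$, since that claim also fails as $r\to1$.
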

\begin{proof}
	Write $ \calL := \calL^{dm}$ for short. By \autoref{lem:paramMeasure},
	\begin{equation}\label{eq:thetaDim-AppParaMeasure}
		\calL \{ \mathbf{a}\in B_{\rho} \colon \DeltaPiXY \leq r \} \lesssim_{\rho,d} \kerZ{r}{\xWy}.
	\end{equation}
	Set $ \Delta_{\mathbf{a}} := \DeltaPiXY $. Then
	\begin{align*}
		 \quad \; \int_{B_{\rho}} & \kerPsiTheta{r}{s}{\Delta_{\bfa}}  \, d\mathbf{a}                                                                                                                                                                                                                \\
		 & = \int_{\{\mathbf{a}\in B_{\rho}\colon \Delta_{\mathbf{a}} \leq r^{1/\theta} \}} r^{-s/\theta} \, d\mathbf{a} + \int_{\{\bfa \in B_{\rho}\colon r^{1/\theta} < \Delta_{\mathbf{a}} \leq r \}} \Delta_{\mathbf{a}}^{-s} \, d\mathbf{a}  \hspace{8em} \text{by \eqref{eq:def-truncatedKernel}} \\
		 & = r^{-s/\theta} \calL\{\mathbf{a}\in B_{\rho}\colon \Delta_{\mathbf{a}} \leq r^{1/\theta} \} +  \int_{0}^{\infty} \calL \{\mathbf{a}\in B_{\rho}\colon  r^{1/\theta}< \Delta_{\mathbf{a}} \leq r,\; \Delta_{\mathbf{a}}^{-s} \geq t \} \, dt                                              \\
		 & = r^{-s/\theta} \calL\{\mathbf{a}\in B_{\rho}\colon \Delta_{\mathbf{a}} \leq r^{1/\theta} \} + \int_{0}^{r^{-s/\theta}} \calL \{\mathbf{a}\in B_{\rho}\colon r^{1/\theta} <\Delta_{\mathbf{a}} \leq \min\{r, t^{-1/s}\} \} \, dt                                                          \\
		 & =   \int_{0}^{r^{-s/\theta}} \calL \{\mathbf{a}\in B_{\rho}\colon \Delta_{\mathbf{a}} \leq \min\{r, t^{-1/s}\} \} \, dt                                                                                                                                                                   \\
		 & =  \int_{0}^{r^{- s}} \calL \{\mathbf{a}\in B_{\rho}\colon  \Delta_{\mathbf{a}} \leq r \} \, dt +  \int_{r^{-s}}^{r^{-s/\theta}} \calL \{\mathbf{a}\in B_{\rho}\colon  \Delta_{\mathbf{a}} \leq t^{-1/s} \} \, dt                                                                         \\
		 & \lesssim_{d,\rho} r^{-s} \kerZ{r}{\xWy} +  \int_{r^{-s}}^{r^{-s/\theta}} \kerZ{t^{-1/s}}{\xWy} \, dt  \hspace{13em} \text{by \eqref{eq:thetaDim-AppParaMeasure} }                                                                                                                                  \\
		 & = r^{-s} \kerZ{r}{\xWy} + s \int_{r^{1/\theta}}^{r} u^{-(s+1)} \kerZ{u}{\xWy} \, du   \hspace{7em} \text{ by taking } u = t^{-1/s}                                                                                                                                                       \\
		 & \leq  \left ( 1 +  s \int_{r^{1/\theta}}^{r} u^{-1}  \, du    \right )  \kerTheta{r}{\xWy}         \hspace{16.6em} \text{by } \eqref{eq:def-kerTheta} \\
		 & \lesssim_{d,\theta} \log(1/r) \kerTheta{r}{\xWy},
	\end{align*}
	where the last inequality is by $ s \int_{r^{1/\theta}}^{r} u^{-1}  \, du = s(1/\theta - 1)\log(1/r) \lesssim_{d,\theta} \log(1/r) $.
\end{proof}

Now we are ready to prove \autoref{thm:main}\ref{itm:LB}.

\begin{proof}[Proof of \autoref{thm:main}\ref{itm:LB}] Our arguments are mainly adapted from the proof of \cite[Theorem 5.1]{BurrellEtAl2021}. We focus on the case of the upper \tThetaDims\ while the proof for the lower \tThetaDims\ is similar. By \autoref{thm:main}\ref{itm:UB}, it suffices to prove
	\begin{equation*}
		\uDim{\theta} \pia(E) \geq \uDim{C,\theta} E
	\end{equation*}
	for $ \calL^{dm} $-a.e.\ $ \bfa\in B_{\rho} $ and $ \rho > 0 $.

	Let $ 0 \leq s \leq d $. Take a sequence $ (r_{k})_{k=1}^{\infty} $ tending to $ 0 $ such that $ 0 < r_{k} \leq 2^{-k} $ and
	\begin{equation}\label{eq:thetaDim-rk}
		\limsup_{k\to\infty} \frac{\log \capTheta{r_{k}}{E} }{-\log r_{k}} = \limsup_{r\to 0} \frac{\log  \capTheta{r}{E} }{-\log r}.
	\end{equation}
	By \autoref{lem:equil-meas}, for each $ k \in \bbN $ there is an equilibrium measure $ \mu_{k} $  on $ E $ for the kernel $ \kerTheta{r_{k}}{\xWy} $. Write
	\begin{equation*}
		\gamma_{k} \coloneqq \frac{1}{ \capTheta{r_{k}}{E} } = \iint \kerTheta{r_{k}}{\xWy} \, d\mu_{k}(x) d\mu_{k}(y).
	\end{equation*}
	Let $ \rho > 0 $. \autoref{lem:thetaDim-paraMeasure} implies that
	\begin{equation}\label{eq:thetaDim-UB-Integral}
		\iint \int_{B_{\rho}} \kerPsiTheta{r_{k}}{s}{\DeltaPiXY} \,d\mathbf{a} \; d\mu_{k}(x) d\mu_{k}(y) \lesssim \gamma_{k}\log(1/r_{k}).
	\end{equation}
	Let $ \varepsilon > 0 $. Note that there is some $ A > 0 $ such that $ r^{\varepsilon/2}\log (1/r) \leq  A $ for all $ r > 0 $. Then summing \eqref{eq:thetaDim-UB-Integral} over $ k\in\bbN $ and using Fubini's theorem lead to
	\begin{align*}
		\int_{B_{\rho}} & \sum_{k=1}^{\infty} \left (  r_{k}^{\varepsilon} \gamma_{k}^{-1} \iint  \kerPsiTheta{r_{k}}{s}{\DeltaPiXY} \, d\mu_{k}(x) d\mu_{k}(y) \right) d\mathbf{a}                \\
		                & \lesssim \sum_{k=1}^{\infty} \log(1/r_{k}) r_{k}^{\varepsilon} \leq A \sum_{k=1}^{\infty} r_{k}^{\varepsilon/2} \leq A \sum_{k=1}^{\infty} 2^{-k\varepsilon/2} < \infty.
	\end{align*}
	Hence for $ \calL^{dm} $-a.e.\ $ \mathbf{a} \in B_{\rho} $, there exists $ M_{\mathbf{a}} > 0 $ such that
	\begin{equation*}
		\iint  \kerPsiTheta{r_{k}}{s}{\abs{u-v}} \, d\pia\mu_{k}(v) d\pia\mu_{k}(u) \leq M_{\mathbf{a}}\gamma_{k} r_{k}^{-\varepsilon} \mFor \text{all } k \in \bbN.
	\end{equation*}
	Then for each $ k $ there exists some Borel $ F_{k} \subset\pia( E) $ such that $ (\pia\mu_{k})(F_{k}) \geq 1/2 $ and
	\begin{equation*}
		\int  \kerPsiTheta{r_{k}}{s}{\abs{u-v}} \, d\pia\mu_{k}(v) \leq 2M_{\mathbf{a}}\gamma_{k} r_{k}^{-\varepsilon} \mFor \text{all } u \in F_{k}.
	\end{equation*}
	\autoref{lem:thetaDim-CoverSumLB} implies that
	\begin{equation*}
	\sumTheta{r_{k}}{\pia(E)} \geq \frac{1}{2}  (2M_{\mathbf{a}}\gamma_{k} r_{k}^{-\varepsilon})^{-1} \gtrsim_{\bfa} r_{k}^{\varepsilon}\gamma_{k}^{-1} = r_{k}^{\varepsilon}\capTheta{r_{k}}{E},
	\end{equation*}
	thus
	\begin{align*}
		\limsup_{k\to\infty} \frac{\log \sumTheta{r_{k}}{\pia(E)} }{-\log r_{k}}
		                                                                             & \geq \limsup_{k\to\infty} \frac{\log \left( r_{k}^{\varepsilon}\capTheta{r_{k}}{E}\right)  }{-\log r_{k}} \\
		                                                                             & = -\varepsilon + \limsup_{k\to\infty} \frac{\log \capTheta{r_{k}}{E}  }{-\log r_{k}} \\ 
		                                                                             & = -\varepsilon + \limsup_{r\to 0} \frac{\log \capTheta{r}{E} }{-\log r} & \text{by } \eqref{eq:thetaDim-rk}.
	\end{align*}
	Letting $ \varepsilon \to 0 $ gives
	\begin{equation*}
		\limsup_{r\to 0}  \frac{\log \sumTheta{r}{\pia(E)} }{-\log r} \geq \limsup_{r\to 0} \frac{\log \capTheta{r}{E} }{-\log r} \mFor 0 \leq s \leq d.
	\end{equation*}
	Finally  \autoref{def:thetaDim} and \autoref{def:theta-CapDim} show that 
	\begin{equation*}
		\uDim{\theta} \pia(E) \geq \uDim{C,\theta} E \mFor \calL^{dm}\text{-a.e. } \mathbf{a} \in B_{\rho}.
	\end{equation*}
	The proof for the lower \tThetaDims\ is similar.
\end{proof}

\section{Generalized intermediate dimensions}\label{sec:PhiDim}
In this section, we will prove \autoref{thm:PhiDim} through a similar strategy of \autoref{thm:main}. In what follows, let $ \Phi \colon (0, Y) \to (0, \infty) $ be an admissible function for some $ Y > 0 $.

\subsection{Generalized intermediate dimensions} Following \cite{Banaji2020}, we introduce the generalized intermediate dimensions called the \textit{\tPhiDims}.

\begin{definition}[\tPhiDims]\label{def:PhiDim}
	For $ E \subset \euclid $, its \textit{upper \tPhiDim} is defined by
	\begin{align*}
		\uDim{\Phi} & E = \inf \{ s \geq 0 \colon \text{ for all } \varepsilon > 0 \text{ there exists } r_{0} \in (0, 1] \text{ such that for all } r \in (0, r_{0}),                                       \\
		            & \text{there exists a cover } \{U_{i} \} \text{ of } E \text{ such that }  \Phi(r) \leq \abs{U_{i}} \leq r \text{ for all } i \text{ and } \sum_{i} \abs{U_{i}}^{s} \leq \varepsilon \}
	\end{align*}
	and its \textit{lower \tPhiDim} is defined by
	\begin{align*}
		\lDim{\Phi} E = \inf \{ & s \geq 0\colon \text{ for all } \varepsilon > 0  \text{ and } r_{0} \in (0, 1] \text{ there exists } r \in (0, r_{0}) \text{  and }                                         \\
		                         & \text{ a cover } \{U_{i} \} \text{ of } E \text{ such that }  \Phi(r) \leq \abs{U_{i}} \leq r \text{ for all } i \text{ and } \sum_{i} \abs{U_{i}}^{s} \leq \varepsilon \}.
	\end{align*}
\end{definition}

We can describe the \tPhiDims\ by employing a similar approach to that used in defining the Hausdorff dimension with the aid of the Hausdorff measures. For $ s \geq 0 $, $ r > 0 $, and $ E \subset \euclid $, define
\begin{equation}\label{eq:def-SumPhi}
	\sumPhi{r}{E} = \inf \left \{\sum_{i} \abs{U_{i}}^{s} \colon \{U_{i}\} \text{ is a cover of } E \text{ with } \Phi(r) \leq \abs{U_{i}} \leq r \right  \}.
\end{equation}
\begin{lemma}\label{lem:char-PhiDim} Let $ \Phi $ be an admissible function and $ E \subset \euclid $. Then
	\begin{align*}
		\uDim{\Phi} E & = \inf \{s \geq 0 \colon \limsup_{r\to 0} \sumPhi{r}{E} < \infty \} = \inf \{s \geq 0 \colon \limsup_{r\to 0} \sumPhi{r}{E} = 0 \} \\
		              & = \sup \{s \geq 0 \colon \limsup_{r\to 0} \sumPhi{r}{E} = \infty \} = \sup \{s \geq 0 \colon \limsup_{r\to 0} \sumPhi{r}{E} > 0 \}
	\end{align*}
	and
	\begin{align*}
		\lDim{\Phi} E & = \inf \{s \geq 0 \colon \liminf_{r\to 0} \sumPhi{r}{E} < \infty \} = \inf \{s \geq 0 \colon \liminf_{r\to 0} \sumPhi{r}{E} = 0 \}  \\
		              & = \sup \{s \geq 0 \colon \liminf_{r\to 0} \sumPhi{r}{E} = \infty \} = \sup \{s \geq 0 \colon \liminf_{r\to 0} \sumPhi{r}{E} > 0 \}.
	\end{align*}
\end{lemma}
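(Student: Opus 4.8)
The plan is to follow the classical passage from Hausdorff content to Hausdorff dimension, using the quantities $\sumPhi{r}{E}$ in place of $\mathcal{H}^{s}_{\delta}$. Throughout I would restrict to $0 < r \le 1$, which does not affect limits as $r \to 0$; since $\Phi$ is admissible, $0 < \Phi(r) \le r$ for all small $r$, so admissible covers (covers of $E$ by sets of diameter in $[\Phi(r),r]$) exist, e.g.\ by finitely many balls of radius in $[\Phi(r)/2,r/2]$, and each $\sumPhi{r}{E}$ is a finite nonnegative number. Abbreviate $\overline{S}(s):=\limsup_{r\to 0}\sumPhi{r}{E}$ and $\underline{S}(s):=\liminf_{r\to 0}\sumPhi{r}{E}$.

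First I would record two elementary facts: for $0 \le t \le s$, $0 < r \le 1$, and any admissible cover $\{U_{i}\}$ of $E$, one has $\abs{U_{i}}^{s} \le \abs{U_{i}}^{t}$ and $\abs{U_{i}}^{s} \le r^{\,s-t}\abs{U_{i}}^{t}$, hence, taking infima over covers,
\begin{equation*}
	\sumPhi{r}{E} \le \sumPhi[t]{r}{E} \mAnd \sumPhi{r}{E} \le r^{\,s-t}\,\sumPhi[t]{r}{E}.
\end{equation*}
The crux is then the following dichotomy, read off from the second inequality: if $\overline{S}(t) < \infty$ then $\overline{S}(s) = 0$ for every $s > t$ (for then $\sumPhi[t]{r}{E}$ is bounded for small $r$ while $r^{s-t}\to 0$), and the same implication holds with $\overline{S}$ replaced by $\underline{S}$, applying the bound along a sequence realising the $\liminf$. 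Combined with the monotonicity $\overline{S}(s) \le \overline{S}(t)$ for $s \ge t$, this forces $\overline{S}$ to equal $+\infty$ on $[0,\sigma)$ and $0$ on $(\sigma,\infty)$ for a single threshold $\sigma = \sigma(E)$, and likewise $\underline{S}$ for a threshold $\sigma'$. Covering $E$ by $\lesssim r^{-d}$ sets of diameter $r$ shows $\overline{S}(s) = 0$ for $s > d$, and a nonempty set forces $\overline{S}(0) \ge 1 > 0$, so $\sigma, \sigma' \in [0,d]$ and the exponent sets appearing below are nonempty.

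Given the dichotomy, the four quantities in the upper chain all equal $\sigma$: from the graph of $\overline{S}$, each of $\{\,s \ge 0 : \overline{S}(s) < \infty\,\}$ and $\{\,s \ge 0 : \overline{S}(s) = 0\,\}$ is a half-line with left endpoint $\sigma$, while each of $\{\,s \ge 0 : \overline{S}(s) = \infty\,\}$ and $\{\,s \ge 0 : \overline{S}(s) > 0\,\}$ is an interval with right endpoint $\sigma$ (interpreting $\sup\varnothing = 0$), so their infima resp.\ suprema are all $\sigma$. It then remains to identify $\sigma$ with $\uDim{\Phi}E$: unwinding \autoref{def:PhiDim}, and using that $\sumPhi{r}{E}$ is the infimum of $\sum_{i}\abs{U_{i}}^{s}$ over admissible covers, the membership condition for $s$ in the defining set of $\uDim{\Phi}E$ --- namely, that for every $\varepsilon>0$ there is $r_{0}$ such that for all $r\in(0,r_{0})$ some admissible cover has $\sum_{i}\abs{U_{i}}^{s}\le\varepsilon$ --- is equivalent to $\overline{S}(s)=0$, whence $\uDim{\Phi}E = \inf\{\,s \ge 0 : \overline{S}(s) = 0\,\} = \sigma$. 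The lower \tPhiDim\ is treated identically: the lower half of \autoref{def:PhiDim} translates to $\underline{S}(s) = 0$, giving $\lDim{\Phi}E = \sigma'$, and the four lower quantities all equal $\sigma'$ by the dichotomy for $\underline{S}$.

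The only step carrying real content is the dichotomy; the rest is bookkeeping with infima and suprema of half-lines and a direct translation of \autoref{def:PhiDim}. Note that the argument uses only $0 < \Phi(r) \le r$, not the full admissibility condition $\Phi(r)/r \to 0$, which plays no role in this particular lemma.
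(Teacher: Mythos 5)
Your proof is correct and follows essentially the same route as the paper: derive the two comparison inequalities for $\sumPhi{r}{E}$, obtain the finite-then-zero dichotomy for $\overline{S}$ and $\underline{S}$, pin the threshold in $[0,d]$ via the covering bound and nonemptiness, and translate \autoref{def:PhiDim} into the statement $\overline{S}(s)=0$ (resp.\ $\underline{S}(s)=0$). The paper states both halves of the comparison, namely $\Phi(r)^{s-t}\sumPhi[t]{r}{E}\leq\sumPhi[s]{r}{E}\leq r^{s-t}\sumPhi[t]{r}{E}$, and then defers to the standard Hausdorff-dimension argument; your version spells out the bookkeeping and correctly observes that for this particular lemma only the right-hand inequality, and only the condition $0<\Phi(r)\leq r$ rather than full admissibility, are used (the left-hand inequality does become relevant in the paper later, for the characterisations \eqref{eq:char-lPhiDim-zero}--\eqref{eq:char-uPhiDim-zero} and for \eqref{eq:GenDim-justify}).
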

\begin{proof}	
	Since $ E $ is non-empty, we have $ \sumPhi[0]{r}{E} \geq 1 $. Pick any $ x \in E $, then $ E \subset B(x, \abs{E}) $. Since $ E $ is bounded, we have for $ r \leq \abs{E} $,
	\begin{align*}
		\sumPhi[d]{r}{E}  \leq \sumPhi[d]{r}{B(x, \abs{E})} \leq r^{d} N_{r}(B(x, \abs{E})) \lesssim_{d} r^{d}\max \{1, \frac{\abs{E}^{d}}{r^{d}} \} = \abs{E}^{d},
	\end{align*}
	where the last inequality follows from \autoref{lem:kerGeo-counting}.
	Note that for $ 0 \leq t\leq s $,
	\begin{equation}\label{eq:sumPhi-compare}
		\Phi(r)^{s-t} \sumPhi[t]{r}{E} \leq \sumPhi[s]{r}{E} \leq r^{s-t} \sumPhi[t]{r}{E}.
	\end{equation}
	By combining \eqref{eq:sumPhi-compare} with $ \sumPhi[0]{r}{E} \geq 1$ and $ \sumPhi[d]{r}{E} \lesssim \abs{E}^{d} $, we can complete the proof in a similar manner like the definition of the Hausdorff dimensions (see \cite[Section 3.2]{Falconer2003}).
\end{proof}

Note that by \eqref{eq:sumPhi-compare} a similar proof of \autoref{def:thetaDim} shows that 
\begin{equation}\label{eq:char-lPhiDim-zero}
	\uDim{\Phi} E = \left ( \text{the unique } s \in [0, d] \text{ such that } \limsup_{r\to 0} \frac{\log \sumPhi{r}{E}}{-\log r} = 0 \right)
\end{equation}
if $ \limsup_{r \to 0} \log r / \log \Phi(r) > 0 $, and
\begin{equation}\label{eq:char-uPhiDim-zero}
	\lDim{\Phi} E = \left ( \text{the unique } s \in [0, d] \text{ such that } \liminf_{r\to 0} \frac{\log \sumPhi{r}{E}}{-\log r } = 0 \right).
\end{equation}
if $ \liminf_{r \to 0} \log r / \log \Phi(r) > 0 $.

\subsection{Generalized capacity dimensions and dimension profiles} \label{subsec:def-CapDimProfiles}

We begin with the introduction of some appropriate kernels in the corresponding settings.

\begin{definition}(kernels) \label{def:PhiKernels} Let $ \Phi $ be an admissible function.
	\begin{itemize}
		\item In \autoref{set:selfaffine}, let $ 0 \leq s \leq d $ and $ 0 < r \leq 1 $. Define
		      \begin{equation}\label{eq:def-ker-affine-phi}
			      \kerPhi{r}{\xWy} := \max_{\Phi(r) \leq u \leq r} u^{-s} \kerZ{u}{\xWy} \mFor x , y\in \Sigma,
		      \end{equation}
	      where $ \kerZ{u}{\xWy} $ is defined in \eqref{eq:def-kerZ}.

		\item In \autoref{set:orthogon} and \autoref{set:brownian}, let $ \tau > 0 $, $ 0 \leq s \leq \tau $, and $ 0 < r \leq 1 $. Define
		      \begin{equation}\label{eq:def-ker-profile-phi}
			      \begin{aligned}
				      \kerProf{r}{s}{\tau}{\xyAbs} := \max_{\Phi(r) \leq u \leq r} u^{-s} \kerGeoCount{u}{\tau}{\xyAbs} \mFor  x, y \in \euclid[d],
			      \end{aligned}
		      \end{equation}
		      where
		      \begin{equation}\label{eq:def-ker-geoCounting}
			      \kerGeoCount{u}{\tau}{\Delta} := \min\left \{1, \left (\frac{u}{\Delta}\right )^{\tau}\right \} \quad \text{ for }  \Delta \geq 0.
		      \end{equation}
	\end{itemize}
\end{definition}

Like \autoref{lem:kerZ-counting}, we have the following simple geometric fact.

\begin{lemma}\label{lem:kerGeo-counting}
	Let $ B(x, \Delta) \subset \euclid[m] $ be a ball. Then for $ r > 0 $,
	\begin{equation*}
		N_{r}(B(x,\Delta)) \lesssim_{m} \frac{1}{\kerGeoCount{r}{m}{\Delta}}.
	\end{equation*}
\end{lemma}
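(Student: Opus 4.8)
For a ball $B(x,\Delta) \subset \euclid[m]$ and $r > 0$,
\[
	N_{r}(B(x,\Delta)) \lesssim_{m} \frac{1}{\kerGeoCount{r}{m}{\Delta}},
\]
where $\kerGeoCount{r}{m}{\Delta} = \min\{1, (r/\Delta)^{m}\}$.

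\textbf{Approach.} The plan is a direct volume/covering estimate, splitting on the sign of $\Delta - r$. The quantity $\kerGeoCount{r}{m}{\Delta}^{-1} = \max\{1, (\Delta/r)^{m}\}$, so in effect I must show $N_{r}(B(x,\Delta)) \lesssim_{m} 1$ when $\Delta \leq r$ and $N_{r}(B(x,\Delta)) \lesssim_{m} (\Delta/r)^{m}$ when $\Delta > r$. First I would record the two elementary facts I need: (a) a ball of radius $\Delta$ contained in $\euclid[m]$ can be covered by at most $C_{m}\max\{1,(\Delta/r)^{m}\}$ sets of diameter $r$ — this is the standard observation that a $\Delta$-ball meets at most $C_{m}(1+\Delta/r)^{m} \leq C_{m}'\max\{1,(\Delta/r)^{m}\}$ cells of a cubic grid of side $\sim r$, each of which has diameter $\lesssim_{m} r$ and can itself be split into $O_{m}(1)$ sets of diameter exactly $\leq r$; (b) when $\Delta \leq r$ the ball $B(x,\Delta)$ is itself a single set of diameter $2\Delta \leq 2r$, so $N_{r}(B(x,\Delta)) \leq C_{m}$ after subdividing once more into $O_{m}(1)$ pieces of diameter $\leq r$ (or one simply notes $N_{r}$ is defined up to the constant convention already in force). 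Combining (a) and (b) gives $N_{r}(B(x,\Delta)) \lesssim_{m} \max\{1,(\Delta/r)^{m}\} = 1/\kerGeoCount{r}{m}{\Delta}$, which is exactly the claim.

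\textbf{Order of steps.} (1) Reduce to the grid-counting estimate: cover $\euclid[m]$ by the dyadic (or side-$r/\sqrt{m}$) cubic mesh, so each mesh cube has diameter at most $r$; (2) bound the number of mesh cubes intersecting $B(x,\Delta)$ by a volume comparison — the union of such cubes lies in $B(x,\Delta + r)$, hence their count is at most $\mathcal{L}^{m}(B(x,\Delta+r)) / \mathcal{L}^{m}(\text{cube}) = C_{m}((\Delta+r)/r)^{m}$; (3) observe $((\Delta+r)/r)^{m} = (1 + \Delta/r)^{m} \leq 2^{m}\max\{1,(\Delta/r)^{m}\}$; (4) identify $\max\{1,(\Delta/r)^{m}\}$ with $1/\kerGeoCount{r}{m}{\Delta}$ directly from \eqref{eq:def-ker-geoCounting}. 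Each mesh cube, having diameter $\leq r$, is an admissible covering set, so the number of covering sets of diameter $\leq r$ needed is at most this count.

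\textbf{Anticipated obstacle.} There is essentially no mathematical obstacle here — the only mild care needed is bookkeeping about the diameter of a mesh cube versus $r$ (choosing the mesh side to be $r/\sqrt{m}$ so that the cube diameter is exactly $r$, which keeps the constant explicit and $m$-dependent), and making sure the degenerate case $\Delta = 0$ or $\Delta \leq r$ is folded in cleanly via the $\max$ with $1$. This is the direct analogue of \autoref{lem:kerZ-counting} with the isotropic kernel $\kerGeoCount{u}{m}{\cdot}$ playing the role of $\kerZ{u}{\cdot}$ for the degenerate matrix tuple $(r^{-1}I_{m},\dots)$, and the proof is correspondingly routine; I would present it in a few lines.
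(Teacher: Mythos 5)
Your proof is correct and takes essentially the same approach as the paper: the paper encloses $B(x,\Delta)$ in the cube $A=\prod_i[x_i-\Delta,x_i+\Delta]$ and covers $A$ by $C\max\{1,(\Delta/r)^m\}$ cubes of side $r/\sqrt m$ (hence diameter $r$), which is the same grid-counting estimate you carry out via a volume comparison. The bookkeeping you flag (choosing side $r/\sqrt m$ so the mesh cells have diameter exactly $r$, and absorbing the $\Delta\le r$ case into the $\max$ with $1$) matches the paper's treatment.
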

\begin{proof}
 	Write $ x = (x_{1}, \ldots, x_{m}) $ and $ A = \prod_{i = 1}^{m} [x_{i}-\Delta, x_{i} + \Delta] $. Note that $ A $ can be covered by $ C \max \{ 1, (\Delta/r)^{m} \} $ manly cubes with side length $ r/\sqrt{m} $, where $ C $ is a constant only depending on $ m $. This completes the proof since $ B(x, \Delta) \subset A $ and the diameter of each cube with side length $ r/\sqrt{m} $ is $ r $.
\end{proof}

We proceed by defining the capacities with respect to the above kernels.

\begin{definition}[capacities]\label{def:gen-Cap}
	Let $ X $ be a compact metric space and  $ K \colon X \times X \to (0, +\infty) $ be a continuous function. For each compact set $ E \subset X $, the \textit{capacity} of $ E $ with respect to the \textit{kernel} $ K $ is defined by
	\begin{equation}\label{eq:def-PhiCap}
		C_{K}(E) := \left ( \inf_{\mu\in\calP(E)} \iint K(x,y) \, d\mu(x)d\mu(y) \right )^{-1}.
	\end{equation}
	By convention we set $ C_{K}(E) = C_{K}(\ol{E}) $ for every non-compact set $ E \subset X $. Thus when it comes to capacities, without loss of generality we can assume that the underlying set is compact. In particular, we focus on the following capacities.
	\begin{itemize}
		\item In \autoref{set:selfaffine}, let $  K(x,y) = \kerPhi{r}{\xWy} $ (see \eqref{eq:def-ker-affine-phi}). Define
		      \begin{equation*}
			      \capPhi{r}{E} := C_{K}(E) \mFor E \subset \Sigma.
		      \end{equation*}

		\item In \autoref{set:orthogon} and \autoref{set:brownian}, let $ K(x, y) = \kerProf{r}{s}{\tau}{\xyAbs} $ (see \eqref{eq:def-ker-profile-phi}). Define
		      \begin{equation*}
			      \capProf{r}{s}{\tau}{E} := C_{K}(E) \mFor \text{each bounded set }E \subset \euclid[d].
		      \end{equation*}
	\end{itemize}
\end{definition}

Now we are ready to define the generalized capacity dimensions called the \textit{$ \Phi $\nobreakdash-capacity dimensions} and the generalized dimension profiles called the \textit{$ \Phi $\nobreakdash-dimension profiles}.

\begin{definition}[$ \Phi $-capacity dimensions]\label{def:CapPhiDim} In \autoref{set:selfaffine}, let $ E \subset \Sigma $. The \textit{upper and lower $ \Phi $-capacity dimensions} of $ E $ are respectively defined by
		     \begin{align*}
			      \uDim{C,\Phi} E & = \inf \{s \geq 0 \colon \limsup_{r\to 0} \capPhi{r}{E} < \infty \} = \inf \{s \geq 0 \colon \limsup_{r\to 0} \capPhi{r}{E} = 0 \} \\
			                      & = \sup \{s \geq 0 \colon \limsup_{r\to 0} \capPhi{r}{E} = \infty \} = \sup \{s \geq 0 \colon \limsup_{r\to 0} \capPhi{r}{E} > 0 \}
		      \end{align*}
		      and
		      \begin{align*}
			      \lDim{C,\Phi} E & = \inf \{s \geq 0 \colon \liminf_{r\to 0} \capPhi{r}{E} < \infty \} = \inf \{s \geq 0 \colon \liminf_{r\to 0} \capPhi{r}{E} = 0 \}  \\
			                      & = \sup \{s \geq 0 \colon \liminf_{r\to 0} \capPhi{r}{E} = \infty \} = \sup \{s \geq 0 \colon \liminf_{r\to 0} \capPhi{r}{E} > 0 \}.
		      \end{align*}
\end{definition}

\begin{definition}[$ \Phi $\nobreakdash-dimension profiles]\label{def:DimProfile} In \autoref{set:orthogon} and \autoref{set:brownian}, let $ E \subset \euclid $ and $ \tau > 0 $. The \textit{upper and lower $ \Phi $-dimension profiles} of $ E $ are respectively defined by
	\begin{align*}
		\uDim{\Phi}^{\tau} E & = \inf \{s \geq 0 \colon \limsup_{r\to 0} \capProf{r}{s}{\tau}{E} < \infty \} = \inf \{s \geq 0 \colon \limsup_{r\to 0} \capProf{r}{s}{\tau}{E} = 0 \} \\
		& = \sup \{s \geq 0 \colon \limsup_{r\to 0} \capProf{r}{s}{\tau}{E} = \infty \} = \sup \{s \geq 0 \colon \limsup_{r\to 0} \capProf{r}{s}{\tau}{E} > 0 \}
	\end{align*}
	and
	\begin{align*}
		\lDim{\Phi}^{\tau} E & = \inf \{s \geq 0 \colon \liminf_{r\to 0} \capProf{r}{s}{\tau}{E} < \infty \} = \inf \{s \geq 0 \colon \liminf_{r\to 0} \capProf{r}{s}{\tau}{E} = 0 \}  \\
		& = \sup \{s \geq 0 \colon \liminf_{r\to 0} \capProf{r}{s}{\tau}{E} = \infty \} = \sup \{s \geq 0 \colon \liminf_{r\to 0} \capProf{r}{s}{\tau}{E} > 0 \}.
	\end{align*}
\end{definition}

\autoref{def:CapPhiDim} and \autoref{def:DimProfile} are justified as follows. Let $ K_{\Phi, r}^{s}(x,y) = \kerPhi{r}{\xWy} $ or $ \kerProf{r}{s}{\tau}{\xyAbs} $. According to \autoref{def:PhiKernels}, for $  0 \leq t \leq s $,
\begin{equation*}
	r^{-(s-t)} K_{\Phi, r}^{t}(x,y) \leq K_{\Phi, r}^{s}(x,y) \leq \Phi(r)^{-(s-t)} K_{\Phi, r}^{t}(x,y).
\end{equation*}
Then
\begin{equation}\label{eq:GenDim-justify}
	\Phi(r)^{s-t} C_{ K_{\Phi, r}^{t} }(E) \leq C_{ K_{\Phi, r}^{s} }(E) \leq r^{s-t} C_{ K_{\Phi, r}^{t} }(E).
\end{equation}
Since $ \kerPhi[0]{r}{\xWy} \leq 1 $ and $ \kerPhi[d]{r}{\xWy} \geq 1 $, we have $ \capPhi[0]{r}{E} \geq 1 $ and $ \capPhi[d]{r}{E} \leq 1 $. Hence for $ E \subset \Sigma $,
\begin{equation} \label{eq:CapDim-Bounds}
	0 \leq \lDim{C,\Phi} E \leq \uDim{C,\Phi} E \leq d. 
\end{equation}
Let $ E \subset \euclid $. Since $ \kerProf{r}{0}{\tau}{\xyAbs} \leq 1 $ and for $ 0 <  r < \min\{ 1, \abs{E}\} $,
\begin{equation*}
	\kerProf{r}{\tau}{\tau}{\xyAbs} = \max_{\Phi(r) \leq u \leq r} \min\left \{\frac{1}{u^{\tau}},  \frac{1}{\xyAbs^{\tau}}\right \}  \geq \frac{1}{\abs{E}^{\tau}} \gtrsim_{\tau} 1,
\end{equation*}
we have $ \capProf{r}{0}{\tau}{E} \geq 1 $ and $ \capProf{r}{\tau}{\tau}{E} \lesssim 1 $ when $ r $ is small. Hence
\begin{equation}\label{eq:DimProf-Bounds}
	0 \leq \lDim{\Phi}^{\tau} E \leq \uDim{\Phi}^{\tau} E \leq \tau.
\end{equation}
A combination of \eqref{eq:GenDim-justify}, \eqref{eq:CapDim-Bounds}, and \eqref{eq:DimProf-Bounds} justifies \autoref{def:CapPhiDim} and \autoref{def:DimProfile} in the same way as the proof of \autoref{lem:char-PhiDim}.

From \eqref{eq:GenDim-justify}, we can characterize the $ \Phi $\nobreakdash-capacity dimensions and the $ \Phi $\nobreakdash-dimension profiles like \eqref{eq:char-lPhiDim-zero} and \eqref{eq:char-uPhiDim-zero} if $ \limsup_{r\to0} \log r/ \log \Phi(r) >  0 $ and $ \liminf_{r\to0} \log r / \log \Phi(r) > 0 $ are respectively assumed.

Before finishing this subsection, we introduce a function $ \Phi_{\alpha} $ associated with $ \Phi $. It is useful in the proof of \autoref{thm:PhiDim}. For $ 0 < \alpha \leq 1 $, define
\begin{equation}\label{eq:def-PhiAlpha}
	\Phi_{\alpha} (r) := \Phi(r^{\alpha})^{1/\alpha} \mFor 0 < r < Y^{1/\alpha}.
\end{equation}
It is readily checked that $ \Phi_{\alpha} $ is admissible.

\subsection{Upper bound}
We begin with a lemma about the behavior of the capacities under the H\"older continuous maps.

\begin{lemma}\label{lem:HolderCap}
	 Let $ \tau > 0 $ and $ 0 \leq s \leq \tau $. Let $ f\colon \euclid \to \euclid[m] $ be a map. If there is some $ 0 < \alpha \leq 1 $ such that
	\begin{equation}\label{eq:holder}
		\abs{f(x) - f(y)} \lesssim \abs{x-y}^{\alpha} \mFor x,y \in \euclid,
	\end{equation}
	then for $ 0 < r \leq 1 $,
	\begin{equation}\label{eq:holder-ker}
		\kerProf{r}{s}{\tau}{\abs{f(x)-f(y)}} \gtrsim_{\tau}  \kerAlphaPhi{r^{1/\alpha}}{\alpha s}{\alpha \tau}{\abs{x-y}}.
	\end{equation}
	In particular, for $ E \subset \euclid $ and $ 0 < r \leq 1 $,
	\begin{equation*}
		\capProf{r}{s}{\tau}{f(E)} \lesssim_{\tau} \capAlphaPhi{r^{1/\alpha}}{\alpha s}{\alpha \tau}{E}.
	\end{equation*}
\end{lemma}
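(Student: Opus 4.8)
The plan is to first prove the pointwise kernel estimate \eqref{eq:holder-ker} and then deduce the capacity bound from it by transporting measures along $f$.

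\emph{Proof of \eqref{eq:holder-ker}.} Fix $x,y\in\euclid$ and abbreviate $\Delta:=\abs{x-y}$ and $\Delta':=\abs{f(x)-f(y)}$; by \eqref{eq:holder} there is a constant $c_{0}>0$, independent of $x$ and $y$, with $\Delta'\leq c_{0}\Delta^{\alpha}$. Fix $u$ with $\Phi(r)\leq u\leq r$ and put $v:=u^{1/\alpha}$, so $u=v^{\alpha}$. Since $\kerGeoCount{u}{\tau}{\cdot}$ is non-increasing (by \eqref{eq:def-ker-geoCounting}),
\[
	\kerGeoCount{u}{\tau}{\Delta'}\ \geq\ \kerGeoCount{u}{\tau}{c_{0}\Delta^{\alpha}}\ =\ \min\{1,\,c_{0}^{-\tau}(v/\Delta)^{\alpha\tau}\}\ \geq\ \min\{1,c_{0}^{-\tau}\}\,\kerGeoCount{v}{\alpha\tau}{\Delta},
\]
the last inequality being the elementary fact $\min\{1,ab\}\geq\min\{1,a\}\min\{1,b\}$ for $a,b\geq0$. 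Multiplying through by $u^{-s}=v^{-\alpha s}$ and then taking the maximum over $u\in[\Phi(r),r]$ — equivalently, over $v$ in $[\Phi(r)^{1/\alpha},r^{1/\alpha}]=[\Phi_{\alpha}(r^{1/\alpha}),r^{1/\alpha}]$, where the identity follows from \eqref{eq:def-PhiAlpha} — yields \eqref{eq:holder-ker}, with implied constant $\min\{1,c_{0}^{-\tau}\}^{-1}$ (depending only on $\tau$ and the Hölder constant).

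\emph{Proof of the capacity inequality.} By the convention following \autoref{def:gen-Cap} we may assume $E$ compact; since \eqref{eq:holder} makes $f$ uniformly continuous, $f(E)$ is then compact and the push-forward $f_{\ast}\colon\calP(E)\to\calP(f(E))$ is surjective (the standard fact for continuous surjections of compact metric spaces). Put $K(u,v):=\kerProf{r}{s}{\tau}{\abs{u-v}}$ and $K'(x,y):=\kerAlphaPhi{r^{1/\alpha}}{\alpha s}{\alpha\tau}{\abs{x-y}}$. For any $\nu\in\calP(f(E))$ pick $\mu\in\calP(E)$ with $f_{\ast}\mu=\nu$; then, by \eqref{eq:holder-ker},
\[
	\iint K\,d\nu\,d\nu\ =\ \iint K\bigl(f(x),f(y)\bigr)\,d\mu(x)\,d\mu(y)\ \gtrsim_{\tau}\ \iint K'\,d\mu\,d\mu\ \geq\ \inf_{\mu'\in\calP(E)}\iint K'\,d\mu'\,d\mu'.
\]
Taking the infimum over $\nu\in\calP(f(E))$ and inverting (see \eqref{eq:def-PhiCap}) gives $\capProf{r}{s}{\tau}{f(E)}\lesssim_{\tau}\capAlphaPhi{r^{1/\alpha}}{\alpha s}{\alpha\tau}{E}$, which is the claim.

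The only substantive point is the single step in the first part: the change of variables $v=u^{1/\alpha}$ together with the identity $\Phi_{\alpha}(r^{1/\alpha})=\Phi(r)^{1/\alpha}$ is exactly what forces the two maxima to run over matching intervals. Everything else — surjectivity of $f_{\ast}$, the monotonicity of $\kerGeoCount{u}{\tau}{\cdot}$, and the elementary numerical inequalities — is routine bookkeeping.
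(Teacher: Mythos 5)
Your proof is correct and follows essentially the same route as the paper: both establish \eqref{eq:holder-ker} by applying the H\"older bound inside the $\min$, then changing variables $v=u^{1/\alpha}$ and invoking $\Phi_{\alpha}(r^{1/\alpha})=\Phi(r)^{1/\alpha}$ to match the maximization interval, and both deduce the capacity inequality by lifting measures from $f(E)$ to $E$ via surjectivity of $f_{\ast}$ (the paper cites Mattila, Theorem~1.20, as you note in spirit). The only cosmetic difference is in the capacity step: the paper first extracts an equilibrium measure $\nu$ for the kernel on $f(E)$ via \autoref{lem:equil-meas} and then lifts it, while you run the infimum over all $\nu\in\calP(f(E))$ directly, which is equivalent and slightly more self-contained.
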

\begin{proof}
	According to \eqref{eq:def-ker-profile-phi},
	\begin{equation}\label{eq:ker-holder}
		\begin{aligned}
		& \!\!\!\!\!\!\!\! \kerProf{r}{s}{\tau}{\abs{f(x)-f(y)}} \\ & = \max_{\Phi(r) \leq u \leq r} u^{-s} \min \left \{ 1, \frac{u^{\tau}}{\abs{f(x)-f(y)}^{\tau}}\right \} \\
		& \gtrsim_{\tau} \max_{\Phi(r) \leq u \leq r} u^{- s} \min \left \{ 1, \frac{u^{\tau}}{\xyAbs^{\alpha \tau}}\right \} & \text{by }\eqref{eq:holder}        \\
		& = \max_{\Phi(r)^{1/\alpha} \leq v \leq r^{1/\alpha}} v^{-\alpha s} \min \left \{ 1, \frac{v^{\alpha \tau}}{\abs{x-y}^{\alpha \tau}}\right \}    & \text{by letting } v = u^{1/\alpha}        \\ 
		& = \max_{\Phi_{\alpha}(r^{1/\alpha}) \leq v \leq r^{1/\alpha}} v^{-\alpha s} \min \left \{ 1, \frac{v^{\alpha \tau}}{\abs{x-y}^{\alpha \tau}}\right \}      & \text{by } \eqref{eq:def-PhiAlpha} \\
		& = \kerProfAlpha{r}{s}{\tau}{\xyAbs} .
	\end{aligned}
	\end{equation}
	This proves \eqref{eq:holder-ker}.

	Without loss of generality we assume that $ E $ is compact. By \autoref{lem:equil-meas}, there is an equilibrium measure $ \nu \in \calP(f(E)) $ for the kernel $  \kerProf{r}{s}{m}{\abs{u-v}}  $. Then \cite[Theorem 1.20]{Mattila1995} gives some $ \mu\in\calP(E) $ such that $ \nu = f\mu $. Hence
	\begin{equation}\label{eq:Holder-Cap}
		\begin{aligned}
			\capProfAlpha{r}{s}{\tau}{E} & \geq \left (\int  \kerProfAlpha{r}{s}{\tau}{\xyAbs} \, d\mu(x) d\mu(y) \right )^{-1} \\ & \gtrsim_{\tau} 	\left (\int  \kerProf{r}{s}{\tau}{\abs{f(x)-f(y)}} \, d\mu(x) d\mu(y) \right )^{-1} & \text{by } \eqref{eq:holder-ker} \\ & = \left (\int  \kerProf{r}{s}{\tau}{\abs{u-v}} \, d\nu(u) d\nu(v) \right )^{-1} = \capProf{r}{s}{\tau}{f(E)}.
	\end{aligned}
	\end{equation}
	This completes the proof.
\end{proof}

We now demonstrate how the capacities behave under a map with the modulus of continuity similar to that of  the fractional Brownian motion.

\begin{lemma}\label{lem:FBM-Cap}
		 Let $ \tau > 0 $ and $ 0 \leq s \leq \tau $. Let $ f\colon \euclid \to \euclid[m] $ be a map. If there exist some $ 0 < \alpha <  1 $ and $ 0 < \Delta < 1 $ such that
	\begin{equation}\label{eq:holderLog}
		\abs{f(x) - f(y)} \lesssim \abs{x-y}^{\alpha} \log (1/\xyAbs) \mFor x ,y \in \euclid \text{ with } \xyAbs \leq \Delta,
	\end{equation}
	then for all sufficiently small $ r > 0 $ and $ x, y \in \euclid$ with $\xyAbs  \leq \Delta $,
	\begin{equation}\label{eq:holderLog-ker}
		\kerProf{r}{s}{\tau}{\abs{f(x)-f(y)}} \gtrsim_{\tau,\alpha} [\log (1/\Phi(r))]^{-\tau} \kerProfAlpha{r}{s}{\tau}{\xyAbs}.
	\end{equation}
	Let $ E \subset \euclid $ be a bounded set. Suppose further that there is some $ 0 < \beta \leq 1 $ such that
	\begin{equation}\label{eq:betaHolder}
		\Abs{f(x)-f(y)} \lesssim \xyAbs^{\beta} \mFor x,y \in E.
	\end{equation}
	Then for all sufficiently small $ r > 0 $,
	\begin{equation}\label{eq:HolderLog-Cap}
		\capProf{r}{s}{\tau}{f(E)} \lesssim_{\tau, \alpha, \beta} [\log (1/\Phi(r))]^{\tau}  \capProfAlpha{r}{s}{m}{E}.
	\end{equation}
\end{lemma}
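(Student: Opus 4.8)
The plan is to first establish the pointwise kernel estimate \eqref{eq:holderLog-ker} uniformly over all pairs $ x,y $ with $ \xyAbs\leq\Delta $ and all sufficiently small $ r $, and then to deduce the capacity bound \eqref{eq:HolderLog-Cap} by imitating the proof of \autoref{lem:HolderCap}. Throughout write $ w=\xyAbs $ and $ z=\abs{f(x)-f(y)} $, and let $ C_{0} $ be the implied constant in \eqref{eq:holderLog}, so that $ z\leq C_{0}w^{\alpha}\log(1/w) $ whenever $ w\leq\Delta $.

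The key to \eqref{eq:holderLog-ker} is the following dichotomy, valid for all $ r $ small enough (depending only on $ \Phi,\alpha,C_{0} $): for every pair with $ w\leq\Delta $, either $ z\leq\Phi(r) $ or $ w\geq\Phi(r)^{2/\alpha} $. Indeed, $ g(t):=C_{0}t^{\alpha}\log(1/t) $ is increasing on $ (0,e^{-1/\alpha}) $, and since $ \Phi(r)\to0 $ we have, for $ r $ small, $ \Phi(r)^{2/\alpha}<e^{-1/\alpha} $ and $ g(\Phi(r)^{2/\alpha})=\tfrac{2C_{0}}{\alpha}\Phi(r)^{2}\log(1/\Phi(r))<\Phi(r) $; hence $ w<\Phi(r)^{2/\alpha} $ forces $ z\leq g(w)<\Phi(r) $. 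If $ z\leq\Phi(r) $, then since $ u\geq\Phi(r)\geq z $ gives $ \kerGeoCount{u}{\tau}{z}=1 $ we get from \eqref{eq:def-ker-profile-phi}--\eqref{eq:def-ker-geoCounting} that $ \kerProf{r}{s}{\tau}{z}=\max_{\Phi(r)\leq u\leq r}u^{-s}=\Phi(r)^{-s} $, whereas $ \kerProfAlpha{r}{s}{\tau}{w}\leq\max_{v}v^{-\alpha s}=\Phi(r)^{-s} $ by \eqref{eq:def-PhiAlpha}, so \eqref{eq:holderLog-ker} holds once $ \log(1/\Phi(r))\geq1 $. If instead $ w\geq\Phi(r)^{2/\alpha} $, then $ \log(1/w)\leq\tfrac{2}{\alpha}\log(1/\Phi(r)) $, hence $ z\leq Aw^{\alpha} $ with $ A:=\tfrac{2C_{0}}{\alpha}\log(1/\Phi(r))\geq1 $ for $ r $ small; using the elementary inequality $ \min\{1,X/A^{\tau}\}\geq A^{-\tau}\min\{1,X\} $ (valid for $ A\geq1 $), the monotonicity of $ \kerGeoCount{u}{\tau}{\cdot} $, and the substitution $ v=u^{1/\alpha} $ exactly as in \eqref{eq:ker-holder} (recall $ \Phi_{\alpha}(r^{1/\alpha})=\Phi(r)^{1/\alpha} $), we obtain
\begin{equation*}
	\kerProf{r}{s}{\tau}{z}\;\geq\;\max_{\Phi(r)\leq u\leq r}u^{-s}\min\Bigl\{1,\tfrac{u^{\tau}}{(Aw^{\alpha})^{\tau}}\Bigr\}\;\geq\;A^{-\tau}\max_{\Phi(r)\leq u\leq r}u^{-s}\min\Bigl\{1,\tfrac{u^{\tau}}{w^{\alpha\tau}}\Bigr\}\;=\;\Bigl(\tfrac{2C_{0}}{\alpha}\Bigr)^{-\tau}[\log(1/\Phi(r))]^{-\tau}\,\kerProfAlpha{r}{s}{\tau}{w},
\end{equation*}
which is \eqref{eq:holderLog-ker}.

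For \eqref{eq:HolderLog-Cap} I would follow the proof of \autoref{lem:HolderCap} verbatim in structure. Assuming $ E $ compact, \autoref{lem:equil-meas} provides an equilibrium measure $ \nu\in\calP(f(E)) $ for the kernel $ (u,v)\mapsto\kerProf{r}{s}{\tau}{\abs{u-v}} $, and \cite[Theorem 1.20]{Mattila1995} gives $ \mu\in\calP(E) $ with $ \nu=f\mu $; since $ \capProfAlpha{r}{s}{m}{E}\geq\bigl(\iint\kerProfAlpha{r}{s}{m}{\xyAbs}\,d\mu(x)d\mu(y)\bigr)^{-1} $ and the right-hand side of the target equals $ [\log(1/\Phi(r))]^{\tau}/\capProf{r}{s}{\tau}{f(E)} $, it suffices to prove
\begin{equation*}
	\iint\kerProfAlpha{r}{s}{m}{\xyAbs}\,d\mu(x)d\mu(y)\;\lesssim_{\tau,\alpha,\beta}\;[\log(1/\Phi(r))]^{\tau}\iint\kerProf{r}{s}{\tau}{\abs{f(x)-f(y)}}\,d\mu(x)d\mu(y).
\end{equation*}
Split the domain into $ \{w\leq\Delta\} $ and $ \{w>\Delta\} $. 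On the first region \eqref{eq:holderLog-ker} bounds the $ \kerProfAlpha{r}{s}{\tau}{\cdot} $ integrand, hence also the $ \kerProfAlpha{r}{s}{m}{\cdot} $ one in the relevant range $ \tau\leq m $ (the applications have $ \tau=m $; in general $ \kerProfAlpha{r}{s}{m}{\cdot}\leq\kerProfAlpha{r}{s}{\tau}{\cdot} $ when $ \tau\leq m $, since $ (v/w)^{\alpha m}\leq(v/w)^{\alpha\tau} $ for $ v\leq w $). On the second region $ w $ is bounded below by $ \Delta $ while $ \abs{f(x)-f(y)} $ is bounded above by \eqref{eq:betaHolder}, so inspecting \eqref{eq:def-ker-profile-phi} one finds $ \kerProfAlpha{r}{s}{m}{w}\lesssim_{E}r^{m-s} $ and $ \kerProf{r}{s}{\tau}{\abs{f(x)-f(y)}}\gtrsim_{E}r^{\tau-s} $ for small $ r $, and with $ \tau=m $ (or $ \tau\leq m $) the required inequality is then immediate for small $ r $. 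Combining the two regions and rearranging as in \eqref{eq:Holder-Cap} yields \eqref{eq:HolderLog-Cap}.

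The hard part is the uniformity in the first step. The factor $ \log(1/w) $ in \eqref{eq:holderLog} is only harmless when $ w $ is not super-polynomially small compared with $ \Phi(r) $; in the bad range $ \log(1/w) $ can dwarf $ \log(1/\Phi(r)) $, but there $ \abs{f(x)-f(y)}\leq\Phi(r) $, so the target kernel has already saturated at its maximal value $ \Phi(r)^{-s} $ and nothing is lost — this is exactly what the dichotomy above records. Everything afterwards — the $ v=u^{1/\alpha} $ change of variable, the elementary $ \min $-inequality, and the near-diagonal/far split in the capacity estimate — is routine and closely parallels \autoref{lem:HolderCap}.
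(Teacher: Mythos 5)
Your proof is correct and takes essentially the same approach as the paper's: both hinge on comparing $-|x-y|^{\alpha}\log|x-y|$ with $\Phi(r)$ via the monotonicity of $t\mapsto -t^{\alpha}\log t$ near $0$ and the observation that $\Phi(r)^{2}\log(1/\Phi(r))<\Phi(r)$ for small $r$ (the paper splits on whether $u\geq -|h|^{\alpha}\log|h|$ inside the max defining the kernel, you split on the pair $(x,y)$; these are equivalent, and the paper treats the far region $|x-y|\geq\Delta$ via the same pointwise kernel comparison rather than by separately bounding both kernels by powers of $r$, but that is only a stylistic difference). Your remark that the right-hand side of \eqref{eq:HolderLog-Cap} should read $\capProfAlpha{r}{s}{\tau}{E}$ rather than $\capProfAlpha{r}{s}{m}{E}$ is warranted — that is what the paper's own proof actually derives, and the application takes $\tau=m$.
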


\begin{proof}
Let $ x, h \in \euclid $ with $ \abs{h} \leq \Delta $. According to \eqref{eq:def-ker-profile-phi}, 
	\begin{equation}\label{eq:ker-HolderLog}
		\begin{aligned}
		\kerProf{r}{s}{\tau}{& \abs{f(x+h)-f(x)}} \\ & = \max_{\Phi(r) \leq u \leq r} u^{-s} \min \left \{ 1, \frac{u^{\tau}}{\abs{f(x+h)-f(x)}^{\tau}}\right \}                                                                                         \\
		& \gtrsim_{\tau} \max_{\Phi(r) \leq u \leq r} u^{- s} \min \left \{ 1, \frac{u^{\tau}}{\abs{h}^{\alpha \tau} [\log(1/\abs{h})]^{\tau} }\right \} \qquad \text{by }\eqref{eq:holderLog}        \\
		& = \max_{\Phi(r) \leq u \leq r} u^{- s} \min\left  \{ 1, \left (  \frac{u}{-\abs{h}^{\alpha} \log \abs{h}}\right )^{\tau} \right \}.
	\end{aligned}
	\end{equation}
	For $ \Phi(r) \leq u \leq r $, write 
	\begin{equation*}
		L(u,h) := \min\left  \{ 1, \left (  \frac{u}{-\abs{h}^{\alpha} \log \abs{h}}\right )^{\tau} \right \} \mAnd M(u,h) := \min\left  \{ 1, \frac{u^{\tau}}{\abs{h}^{\alpha \tau}} \right \}.
	\end{equation*}
	If $ u \geq  -\abs{h}^{\alpha} \log \abs{h} $, then $ L(u, h) = 1 \geq M(u, h) $; If $ u < -\abs{h}^{\alpha} \log \abs{h} $, then $ \Phi(r) \leq - \abs{h}^{\alpha} \log \abs{h} $. Denote $ g(x) := - x^{\alpha}\log x  $ for $ x > 0 $. Note that when $ r $ is small,
	\begin{equation*}
		g\left(\Phi(r)^{2/\alpha}\right) = - \frac{2}{\alpha} \Phi(r)^{2} \log \Phi(r) \leq \Phi(r) \leq g(\abs{h}). 
	\end{equation*}
	Since $ g(x) $ is increasing on $ (0, \exp(-1/\alpha)) $, we have $ \abs{h} \geq \Phi(r)^{2/\alpha} $. Hence
	\begin{equation*}
		L(u, h) = \left (  \frac{u}{-\abs{h}^{\alpha} \log \abs{h}}\right )^{\tau} \geq \left(\frac{\alpha}{2}\right)^{\tau} [\log (1/\Phi(r))]^{-\tau} \frac{u^{\tau}}{\abs{h}^{\alpha \tau}} \gtrsim_{\alpha,\tau} [\log (1/\Phi(r))]^{-\tau} M(u, h).
	\end{equation*}
	This concludes that
	\begin{equation*}
		L(u, h) \gtrsim_{\alpha,\tau} [\log (1/\Phi(r))]^{-\tau} M(u, h) \mFor \Phi(r) \leq u \leq r, \, \abs{h} \leq \Delta.
	\end{equation*}
	Together with \eqref{eq:ker-HolderLog}, we have
	\begin{align*}
		\kerProf{r}{s}{\tau}{\abs{f(x+h)-f(x)}} & \gtrsim_{\tau,\alpha}   [\log (1/\Phi(r))]^{-\tau}  \max_{\Phi(r) \leq u \leq r} u^{- s} \min\left  \{ 1, \frac{u^{\tau}}{\abs{h}^{\alpha \tau}} \right \} \\
	& = [\log (1/\Phi(r))]^{-\tau} \kerProfAlpha{r}{s}{\tau}{\abs{h}},
	\end{align*}
	where the last equality is by taking $ v = u^{1/\alpha} $. This proves \eqref{eq:holderLog-ker}.
	
	Without loss of generality we assume that $ E $ is compact. For $ x, y \in E $ with $ \abs{x-y} \geq \Delta $, \eqref{eq:betaHolder} implies that
	\begin{equation*}
		\Abs{f(x)- f(y)} \lesssim \Abs{E}^{\beta} = \frac{\Abs{E}^{\beta}}{\Delta^{\alpha}} \Delta^{\alpha} \lesssim_{\alpha,\beta} \Delta^{\alpha} \leq \xyAbs^{\alpha},
	\end{equation*}
	thus the calculation in \eqref{eq:ker-holder} shows that for $ x, y \in E $ with $ \xyAbs \geq \Delta $,
	\begin{equation*}
		\kerProf{r}{s}{\tau}{\abs{f(x)-f(y)}} \gtrsim_{\tau,\alpha,\beta} \kerProfAlpha{r}{s}{\tau}{\xyAbs}.
	\end{equation*}
	Together with \eqref{eq:holderLog-ker}, we have for $ x , y \in E $ and sufficiently small $ r > 0 $,
	\begin{equation}\label{eq:ker-set-HolderLog}
		\kerProf{r}{s}{\tau}{\abs{f(x)-f(y)}} \gtrsim_{\tau, \alpha,\beta} [\log (1/\Phi(r))]^{-\tau} \kerProfAlpha{r}{s}{\tau}{\xyAbs} .
	\end{equation}
	Based on \eqref{eq:ker-set-HolderLog}, we apply the arguments in \eqref{eq:Holder-Cap} to get 
	\begin{equation*}\label{eq:fTE<TE}
		\capProf{r}{s}{\tau}{f(E)} \lesssim_{\tau,\alpha,\beta} [\log (1/\Phi(r))]^{\tau} \capProfAlpha{r}{s}{\tau}{E}.
	\end{equation*}
	This finishes the proof.
\end{proof}

To apply \autoref{lem:FBM-Cap}, below we recall L\'evy’s modulus of continuity for the fractional Brownian motion (see e.g., \cite[Chp.~18, Eq.~(3)]{Kahane1985}).

\begin{lemma}\label{lem:LevyCts}
	In \autoref{set:brownian}, let $ E \subset \euclid $ be a bounded set. Then almost surely,
	\begin{equation*}
		\Abs{ \Balph{x} - \Balph{y} } \lesssim \xyAbs^{\alpha/2} \mFor x,y \in E,
	\end{equation*}
	and there exists some $ 0 < \Delta < 1/10 $ such that
	\begin{equation*}
		\Abs{ \Balph{x} - \Balph{y} } \lesssim \xyAbs^{\alpha} \sqrt{\log (1/\xyAbs) } \mFor x,y \in E \text{ with } \xyAbs \leq \Delta.
	\end{equation*}
\end{lemma}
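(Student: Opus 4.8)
The plan is to reduce the statement to the classical one\nobreakdash-dimensional Lévy modulus of continuity for fractional Brownian motion, recorded in \cite[Chp.~18]{Kahane1985}, and then to assemble the two displayed bounds from it.

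Since $E$ is bounded, $\overline{E}$ is compact and hence contained in a cube $Q\subset\euclid[d]$. By \eqref{eq:FBM-dist} each coordinate process $B_{\alpha,i}\colon\euclid[d]\to\bbR$ is an index-$\alpha$ fractional Brownian field, so Lévy's modulus of continuity (\cite[Chp.~18, Eq.~(3)]{Kahane1985}; to pass from the fixed cube used there to $Q$, cover $Q$ by finitely many translates of a fixed cube and invoke the stationarity of the increments) gives that almost surely there are a random constant $C_{i}>0$ and some $\Delta_{i}\in(0,1/10)$ with
\[
	\abs{B_{\alpha,i}(x)-B_{\alpha,i}(y)} \leq C_{i}\,\xyAbs^{\alpha}\sqrt{\log(1/\xyAbs)} \mFor x,y\in\overline{E}\text{ with }\xyAbs\leq\Delta_{i}.
\]
Intersecting these $m$ events, setting $\Delta:=\min_{1\leq i\leq m}\Delta_{i}\in(0,1/10)$, and using $\abs{\Balph{x}-\Balph{y}}\leq\sum_{i=1}^{m}\abs{B_{\alpha,i}(x)-B_{\alpha,i}(y)}$ yields the second displayed bound of the lemma.

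To obtain the first bound I would split into two ranges. For $x,y\in E$ with $\xyAbs\leq\Delta$, the second bound gives
\[
	\abs{\Balph{x}-\Balph{y}} \lesssim \xyAbs^{\alpha}\sqrt{\log(1/\xyAbs)} = \xyAbs^{\alpha/2}\bigl(\xyAbs^{\alpha/2}\sqrt{\log(1/\xyAbs)}\bigr) \lesssim_{\alpha} \xyAbs^{\alpha/2},
\]
since $t\mapsto t^{\alpha/2}\sqrt{\log(1/t)}$ is bounded on $(0,1)$. For $x,y\in E$ with $\xyAbs>\Delta$, almost sure continuity of $B_{\alpha}$ on $\overline{E}$ gives a finite random constant $C_{E}:=2\sup_{z\in\overline{E}}\abs{\Balph{z}}$, whence $\abs{\Balph{x}-\Balph{y}}\leq C_{E}\leq C_{E}\Delta^{-\alpha/2}\,\xyAbs^{\alpha/2}$. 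Combining the two ranges proves the first bound.

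The only genuine input is the one\nobreakdash-dimensional Lévy modulus, which we quote; were one to prove it from scratch, the argument would be a dyadic chaining based on the Gaussian tail bound $\bbP\{\abs{B_{\alpha,i}(x)-B_{\alpha,i}(y)}\geq\lambda\xyAbs^{\alpha}\}\leq 2e^{-\lambda^{2}/2}$, together with a Borel--Cantelli estimate over the $O(2^{nd})$ adjacent pairs in the dyadic grid of mesh $2^{-n}$ and a telescoping step. The only point that needs a little care is the covering argument that upgrades the modulus from a fixed cube to the arbitrary bounded set $E$; this is routine given the stationarity of the increments of fractional Brownian motion.
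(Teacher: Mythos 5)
Your proposal is correct and follows the same route as the paper, which simply states the lemma with the citation to Kahane's Lévy modulus of continuity (the paper gives no explicit proof). You fill in the routine details that the citation leaves implicit: passing to the $m$ independent coordinate processes, extending the modulus from a fixed cube to an arbitrary compact set, deriving the first ($\alpha/2$-Hölder) bound by splitting at the threshold $\Delta$ and using that $t\mapsto t^{\alpha/2}\sqrt{\log(1/t)}$ is bounded near $0$ together with a.s.\ boundedness of $B_{\alpha}$ on $\overline{E}$ for the large-gap range. All of these steps are sound.
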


\begin{remark}	
	As is in \cite{Falconer2020,Burrell2021}, the H\"older continuity of the fractional Brownian motion is sufficient for obtaining results about the \tThetaDims. However, for the \tPhiDims, the more precise modulus of continuity in \autoref{lem:LevyCts} seems necessary.
\end{remark}

Now we are ready to prove the key ingredients in the proof of the upper-bound part of \autoref{thm:PhiDim}. They are analogous to \autoref{lem:thetaDim-CoverSumUB}.

\begin{proposition}\label{lem:Phi-CoverSumUB} Let $ \Phi $ be an admissible function.
	\begin{enumerate}[(i)]
		\item\label{itm:phi-UB-affine} In \autoref{set:selfaffine}, let $ E \subset \Sigma $ be compact and $ 0 \leq s \leq d $. Let $ \bfa \in \euclid[dm] $. If for $ 0 < r \leq 1 $ there exist $ \mu \in \calP(E) $ and $ \gamma > 0 $ such that
		      \begin{equation*}
			      \int \kerPhi{r}{\xWy} \, d\mu(y) \geq \gamma \mFor \text{all } x\in E
		      \end{equation*}
		      then for all sufficiently small $ r > 0$,
		      \begin{equation*}
			      \sumPhi{r}{\pia (E) } \lesssim  \frac{\log(1/\Phi(r))}{\gamma}.
		      \end{equation*}
		      In particular,
		      \begin{equation}\label{eq:UB-Selfaffine}
			      \sumPhi{r}{\pia(E)} \lesssim_{d,\bfa} \log(1/\Phi(r)) \capPhi{r}{E}.
		      \end{equation}

		\item\label{itm:phi-UB-ortho} In \autoref{set:orthogon}, let $ F \subset \euclid[m]$  be compact and $ 0 \leq s \leq m $. If for $ 0 < r \leq 1 $ there exist $ \mu \in \calP(F) $ and $ \gamma > 0 $ such that
		      \begin{equation}\label{eq:potential-phi-LB}
			      \int \kerProf{r}{s}{m}{\xyAbs} \, d\mu(y) \geq \gamma \mFor \text{all } x \in F,
		      \end{equation}
		      then for all sufficiently small  $ r > 0 $,
		      \begin{equation}\label{eq:UB-Ortogon-PDP}
			      \sumPhi{r}{F} \lesssim  \frac{ \log(1/\Phi(r)) }{\gamma}.
		      \end{equation}
		      In particular, for $ E\subset \euclid[d] $ and $ V\in G(d,m) $,
		      \begin{equation}\label{eq:UB-Orthogon}
			      \sumPhi{r}{\piV E } \lesssim \log(1/\Phi(r)) \, \capProf{r}{s}{m}{E}.
		      \end{equation}

		\item\label{itm:phi-UB-brown} In \autoref{set:brownian}, let $ E \subset \euclid[d] $ be compact and $ 0 \leq s \leq m $. Then almost surely for all sufficiently small $ r > 0 $,
		      \begin{equation}\label{eq:UB-Brownian}
			      \sumProfile{r}{s}{\Balph{E}} \lesssim [\log(1/\Phi(r))]^{m + 1} \capAlphaPhi{r^{1/\alpha}}{\alpha s}{\alpha m}{E}.
		      \end{equation}
	\end{enumerate}
\end{proposition}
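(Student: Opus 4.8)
The plan is to prove all three parts by the device used for \autoref{lem:thetaDim-CoverSumUB}, with the cut\nobreakdash-off scale $r^{1/\theta}$ replaced by $\Phi(r)$, and then to deduce \ref{itm:phi-UB-brown} from \ref{itm:phi-UB-ortho}.

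For \ref{itm:phi-UB-affine} I would copy the proof of \autoref{lem:thetaDim-CoverSumUB} almost verbatim. Put $\ell(x):=\min\{n:\alpha_{1}(T_{x|n})\le\Phi(r)\}$ and $\ell:=\lceil\log\Phi(r)/\log\alpha_{+}\rceil$ with $\alpha_{+}:=\max_{j}\norm{T_{j}}$, so $\ell(x)\le\ell$, and $\kerZ{u}{x|n}=1$ (hence $\kerPhi{r}{x|n}=\Phi(r)^{-s}$) whenever $n\ge\ell(x)$ and $\Phi(r)\le u\le r$. Telescoping $\gamma\le\int\kerPhi{r}{\xWy}\,d\mu(y)$ over the levels $0,\dots,\ell(x)$ gives some $n(x)\in[0,\ell]$ and, by \eqref{eq:def-ker-affine-phi}, some $\delta(x)\in[\Phi(r),r]$ with $\delta(x)^{-s}\kerZ{\delta(x)}{x|n(x)}\mu(x|n(x))\ge\gamma/(\ell+1)$. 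Passing to a disjoint subcover $\Gamma\subset\{[x|n(x)]\}_{x\in E}$ by the net structure of $\Sigma$, covering each $\pia([I])$ by a family $\calD_{I}$ of $\lesssim_{d,\bfa}1/\kerZ{\delta_{I}}{I}$ sets of diameter $\delta_{I}\in[\Phi(r),r]$ via \autoref{lem:kerZ-counting}, and summing yields $\sumPhi{r}{\pia(E)}\le\sum_{I\in\Gamma}(\#\calD_{I})\,\delta_{I}^{s}\lesssim_{d,\bfa}\sum_{I\in\Gamma}\delta_{I}^{s}/\kerZ{\delta_{I}}{I}\le(\ell+1)/\gamma$, the last step because $\sum_{I\in\Gamma}\mu(I)\le1$. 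Since $\Phi(r)\le r$ (admissibility), $\ell+1\lesssim\log(1/\Phi(r))$ for small $r$, which is the asserted bound; \eqref{eq:UB-Selfaffine} then follows on taking $\mu$ to be an equilibrium measure (\autoref{lem:equil-meas}) for the continuous positive symmetric kernel $\kerPhi{r}{\xWy}$ on $E$, so that $\gamma=1/\capPhi{r}{E}$.

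Part~\ref{itm:phi-UB-ortho} runs the same scheme inside $\euclid[m]$, with \autoref{lem:kerGeo-counting} in place of \autoref{lem:kerZ-counting} and the $5r$\nobreakdash-covering lemma replacing the tree structure of $\Sigma$. Given $F$ and $\mu$ as in \eqref{eq:potential-phi-LB}, for $x\in F$ split $\int\kerProf{r}{s}{m}{\xyAbs}\,d\mu(y)$ over the dyadic shells of $\xyAbs$ lying between $\Phi(r)$ and $\diam F$ together with the ball $\{y:\xyAbs\le\Phi(r)\}$; using that $\Delta\mapsto\kerProf{r}{s}{m}{\Delta}$ is non\nobreakdash-increasing, equals $\Phi(r)^{-s}$ for $\Delta\le\Phi(r)$, and changes only by an $m$\nobreakdash-dependent factor when $\Delta$ is doubled, this produces $\gamma\lesssim_{m}\sum_{j}\kerProf{r}{s}{m}{\rho_{j}}\,\mu(B(x,\rho_{j}))$ over a geometric sequence of radii $\rho_{j}\in[\Phi(r),\diam F]$ of length $O(\log(1/\Phi(r)))$ for small $r$. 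Pigeonholing yields a radius $\rho(x)$ and, by \eqref{eq:def-ker-profile-phi}, a $\delta(x)\in[\Phi(r),r]$ with $\delta(x)^{s}/\kerGeoCount{\delta(x)}{m}{\rho(x)}\lesssim_{m}(\log(1/\Phi(r))/\gamma)\,\mu(B(x,\rho(x)))$. The $5r$\nobreakdash-covering lemma applied to $\{B(x,\rho(x))\}_{x\in F}$ gives disjoint balls $B(x_{i},\rho(x_{i}))$ whose fivefold dilates cover $F$; covering each $B(x_{i},5\rho(x_{i}))$ by $\lesssim_{m}1/\kerGeoCount{\delta(x_{i})}{m}{\rho(x_{i})}$ sets of diameter $\delta(x_{i})\in[\Phi(r),r]$ via \autoref{lem:kerGeo-counting} (the dilation costing only an $m$\nobreakdash-dependent constant) and summing gives $\sumPhi{r}{F}\lesssim_{m}\sum_{i}\delta(x_{i})^{s}/\kerGeoCount{\delta(x_{i})}{m}{\rho(x_{i})}\lesssim_{m}(\log(1/\Phi(r))/\gamma)\sum_{i}\mu(B(x_{i},\rho(x_{i})))\le\log(1/\Phi(r))/\gamma$, i.e.\ \eqref{eq:UB-Ortogon-PDP}. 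Then \eqref{eq:UB-Orthogon} follows by applying this with $F=\piV E$ and $\mu$ an equilibrium measure for $\kerProf{r}{s}{m}{\abs{u-v}}$ on $\piV E$ (so $\gamma=1/\capProf{r}{s}{m}{\piV E}$), and invoking \autoref{lem:HolderCap} with $f=\piV$ and H\"older exponent $\alpha=1$ (for which $\Phi_{\alpha}=\Phi$) to replace $\capProf{r}{s}{m}{\piV E}$ by $\lesssim_{m}\capProf{r}{s}{m}{E}$.

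For \ref{itm:phi-UB-brown} I would combine \ref{itm:phi-UB-ortho} with \autoref{lem:FBM-Cap}. By \autoref{lem:LevyCts}, almost surely $B_{\alpha}$ is continuous on $E$, satisfies $\abs{\Balph{x}-\Balph{y}}\lesssim\xyAbs^{\alpha}\log(1/\xyAbs)$ for $\xyAbs$ small (upgrading $\sqrt{\log}$ to $\log$, valid since the relevant scale is $<1/10$) and $\abs{\Balph{x}-\Balph{y}}\lesssim\xyAbs^{\alpha/2}$ on $E$; hence \autoref{lem:FBM-Cap} (with $\tau=m$ and $\beta=\alpha/2$) gives, for all sufficiently small $r$,
\[
	\capProf{r}{s}{m}{\Balph{E}}\lesssim_{m,\alpha}[\log(1/\Phi(r))]^{m}\,\capAlphaPhi{r^{1/\alpha}}{\alpha s}{\alpha m}{E}.
\]
On the other hand $\Balph{E}$ is a.s.\ compact, so \eqref{eq:UB-Ortogon-PDP} applied to $F=\Balph{E}$ with $\mu$ an equilibrium measure for $\kerProf{r}{s}{m}{\abs{u-v}}$ on $\Balph{E}$ (so $\gamma=1/\capProf{r}{s}{m}{\Balph{E}}$) gives $\sumPhi{r}{\Balph{E}}\lesssim\log(1/\Phi(r))\,\capProf{r}{s}{m}{\Balph{E}}$. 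Multiplying the two estimates and recalling $\sumProfile{r}{s}{\Balph{E}}=\sumPhi{r}{\Balph{E}}$ yields \eqref{eq:UB-Brownian}.

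The bookkeeping with the telescoping and pigeonhole sums, the bound $\ell+1\lesssim\log(1/\Phi(r))$, and the (routine) continuity and positivity of the kernels needed to apply \autoref{lem:equil-meas} are straightforward. The one genuinely new point relative to \autoref{lem:thetaDim-CoverSumUB} is part~\ref{itm:phi-UB-ortho}: since the dyadic shells around a point of $\euclid[m]$ are not nested cells, the disjoint subcover has to be produced by the $5r$\nobreakdash-covering lemma, and one must check that the unavoidable bounded dilations of balls perturb both the kernel $\kerGeoCount{u}{m}{\Delta}$ and the covering numbers in \autoref{lem:kerGeo-counting} only by $m$\nobreakdash-dependent constants, and that the number of dyadic scales feeding the pigeonhole stays $O(\log(1/\Phi(r)))$ (a single logarithm, which is what makes the product in \ref{itm:phi-UB-brown} come out as $[\log(1/\Phi(r))]^{m+1}$). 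I expect this to be the main, though modest, obstacle.
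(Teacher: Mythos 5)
Your proposal is correct and follows the paper's overall potential-theoretic scheme in all three parts. In part~\ref{itm:phi-UB-affine} you mirror the paper exactly (replace $r^{1/\theta}$ by $\Phi(r)$ in the proof of \autoref{lem:thetaDim-CoverSumUB}); in part~\ref{itm:phi-UB-brown} you combine \autoref{lem:FBM-Cap} with the bound from part~\ref{itm:phi-UB-ortho} precisely as the paper does, and your remark that the $\sqrt{\log}$ in \autoref{lem:LevyCts} upgrades to $\log$ for $\xyAbs<1/e$ is the same observation the paper uses implicitly to invoke \autoref{lem:FBM-Cap}.

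The one place where you genuinely diverge is the covering device in part~\ref{itm:phi-UB-ortho}. The paper decomposes over shells $B(x,\delta_k)\setminus B(x,\delta_{k-1})$ with $\delta_k=e^k\Phi(r)$, pigeonholes, and then applies the \emph{Besicovitch covering theorem} to the family $\{B(x,\delta_{k(x)})\}_{x\in F}$ to split it into $c(m)$ disjoint subfamilies covering $F$ with the original balls; summing $\mu$ over each disjoint subfamily gives the bounded total mass. You instead use the \emph{Vitali $5r$-covering lemma}: a single disjoint subfamily whose fivefold dilates cover $F$. This is a legitimate substitute. The radii $\rho(x)\in[\Phi(r),\diam F]$ are bounded so Vitali applies; the dilation inflates $1/\kerGeoCount{\delta_i}{m}{\cdot}$ in \autoref{lem:kerGeo-counting} by at most $5^m$, matching the $e^m$ loss the paper incurs in passing from $\delta_{k-1}$ to $\delta_k$; and disjointness of the undilated balls still gives $\sum_i\mu(B(x_i,\rho(x_i)))\le 1$. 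The final estimate is the same, so nothing is gained or lost beyond taste; Vitali is marginally more elementary, Besicovitch avoids any dilation bookkeeping. Everything else, including the pigeonhole over $O(\log(1/\Phi(r)))$ geometric scales and the deduction of \eqref{eq:UB-Orthogon} from the $F=\piV E$ case via \autoref{lem:HolderCap} with $\alpha=1$, matches the paper.
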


\begin{proof}
	 \ref{itm:phi-UB-affine} follows from a similar proof of \autoref{lem:thetaDim-CoverSumUB}.

	Next we prove \ref{itm:phi-UB-ortho} by adapting some ideas of \cite{BurrellEtAl2021} but considering a different kernel $ \kerProf{r}{s}{m}{\xyAbs} $.
	Write $ D := \lceil \log (\abs{F} / \Phi(r)) \rceil $. Let $ x\in F $. Then $ F \subset B(x, \abs{F}) \subset B(x, \exp(D)\, \Phi(r)) $.  For simplicity, define
	\begin{equation*}
		\delta_{k} = \exp(k) \Phi(r) \mFor k = 0, 1, 2,\ldots
	\end{equation*}
	By convention we set $ \delta_{-1} := \Phi(r) $.

	Since $ \kerProf{r}{s}{m}{\xyAbs} = \Phi(r)^{-s} $ for $ y \in B(x,\Phi(r)) $ and $ \Delta \mapsto \kerProf{r}{s}{m}{\Delta} $ is non-increasing, by \eqref{eq:potential-phi-LB},
	\begin{align*}
		\gamma & \leq\int \kerProf{r}{s}{m}{\xyAbs} \, d \mu(y) \\&  = \int_{ B(x, \delta_{0}) } \kerProf{r}{s}{m}{ \xyAbs } \, d \mu(y) + \sum_{k=1}^{D} \int_{B(x, \delta_{k}) \setminus B(x, \delta_{k-1})} \kerProf{r}{s}{m}{ \xyAbs } \, d \mu(y) \\& \leq \sum_{k=0}^{D} \kerProf{r}{s}{m}{\delta_{k-1}} \mu(B(x, \delta_{k})).
	\end{align*}
	Hence for each $ x \in F $, there exists some integer $ k(x) \in [0, D] $ such that
	\begin{equation}\label{eq:tmp1}
		\kerProf{r}{s}{m}{\delta_{k(x)-1}} \mu(B(x,\delta_{k(x)})) \geq \frac{\gamma}{D+1}.
	\end{equation}
	By \eqref{eq:def-ker-profile-phi}, we can find some $ u(x) \in [\Phi(r), r] $ such that
	\begin{equation}\label{eq:tmp2}
		u(x) ^{-s} \kerGeoCount{u(x)}{m}{\delta_{k(x)-1}} = \kerProf{r}{s}{m}{\delta_{k(x)-1}}.
	\end{equation}
	Since
	\begin{equation*}
		\kerGeoCount{u(x)}{m}{\delta_{k(x)}} \geq \exp(-m) \kerGeoCount{u(x)}{m}{\delta_{k(x)-1}} \gtrsim_{m} \kerGeoCount{u(x)}{m}{\delta_{k(x)-1}},
	\end{equation*}
	it follows from \eqref{eq:tmp1} and \eqref{eq:tmp2} that
	\begin{equation*}
		u(x) ^{-s} \kerGeoCount{u(x)}{m}{\delta_{k(x)}} \mu(B(x, \delta_{k(x)})) \gtrsim_{m} \frac{\gamma}{D+1}.
	\end{equation*}
	By a rearrangement,
	\begin{equation}\label{eq:UB-us-counting}
		\frac{ u(x)^{s}}{\kerGeoCount{u(x)}{m}{\delta_{k(x)}}} \lesssim_{m} \frac{D+1}{\gamma} \mu(B(x,\delta_{k(x)})).
	\end{equation}


	Let $ \scrB :=  \{ B(x, \delta_{k(x)})\}_{x\in F} $. Then $ \scrB $ is a cover of $ F $. For each $ B = B(x, \delta_{k(x)}) \in \scrB $, write $ \delta_{B} := \delta_{k(x)} $ and $ u_{B} := u(x) \in [\Phi(r), r]$. Then \eqref{eq:UB-us-counting} becomes
	\begin{equation}\label{eq:thick-balls}
		\frac{ u_{B}^{s}}{\kerGeoCount{u_{B}}{m}{\delta_{B}}} \lesssim_{m} \frac{D+1}{\gamma} \mu(B) \mFor B \in \scrB.
	\end{equation}
	By \autoref{lem:kerGeo-counting}, for each $ B \in \scrB $ there is a cover $ \Gamma_{B} $ consisting of sets with diameter $ u_{B} $ such that
	\begin{equation}\label{eq:GeoCount}
		\# \Gamma_{B}  \lesssim_{m} \frac{1}{\kerGeoCount{u_{B}}{m}{\delta_{B}}}.
	\end{equation}
	By Besicovitch covering theorem (see \cite[Theorem 2.7]{Mattila1995}), there are $ \scrB_{1} , \ldots, \scrB_{c} \subset \scrB $ covering $ F $ such that each $ \scrB_{i} $ is disjoint, where $ c $ only depends on $ m $, that is,
	\begin{equation*}
		F \subset \Union_{i=1}^{c} \Union_{B\in \scrB_{i}} B
	\end{equation*}
	and
	\begin{equation*}
		B \intersect B' = \emptyset \mFor B, B' \in \scrB_{i},\, B \neq B',\, i = 1, \ldots, c.
	\end{equation*}
	Then the collection
	$ 	\Union_{i=1}^{c} \Union_{B\in \scrB_{i}} \Gamma_{B} $
	covers $ F $ and consists of sets with diameters in $ [\Phi(r), r] $. Hence
	\begin{align*}
		\sumProfile{r}{s}{F} & \leq \sum_{i=1}^{c} \sum_{B\in \scrB_{i}} \sum_{U\in \Gamma_{B}} \abs{U}^{s} \\ & = \sum_{i=1}^{c} \sum_{B\in \scrB_{i}}  \#\Gamma_{B} \cdot u_{B}^{s} \\ & \lesssim_{m} \sum_{i=1}^{c} \sum_{B\in \scrB_{i}}  \frac{u_{B}^{s}}{\kerGeoCount{u_{B}}{m}{\delta_{B}}} & \text{by } \eqref{eq:GeoCount} \\
		& \lesssim_{m} \sum_{i=1}^{c} \sum_{B\in \scrB_{i}} \frac{D+1}{\gamma} \mu(B) & \text{by }\eqref{eq:thick-balls} \\ & \lesssim_{m} \frac{D+1}{\gamma} & \text{by each } \scrB_{i} \text{ disjoint} \\
		& \lesssim \frac{\log (1/\Phi(r))}{\gamma}
	\end{align*}
	when $ r $ is small. This proves \eqref{eq:UB-Ortogon-PDP}. 
	
	Since \autoref{lem:equil-meas} gives an equilibrium measure $ \mu \in\calP(F) $ satisfying \eqref{eq:potential-phi-LB} with $ \gamma = 1/\capProf{r}{s}{m}{F} $, it follows from \eqref{eq:UB-Ortogon-PDP} that
	\begin{equation}\label{eq:UB-Orthogo-Image}
		\sumProfile{r}{s}{F} \lesssim \log(1/\Phi(r)) \capProf{r}{s}{m}{F}.
	\end{equation}
	Since $ \abs{\piV x - \piV y} \leq \abs{x - y} $ for each orthogonal projection $ \piV $, applying \autoref{lem:HolderCap} with $ \alpha = 1 $ shows
	\begin{equation}\label{eq:LipCap}
		\capProf{r}{s}{m}{\piV E } \lesssim \capProf{r}{s}{m}{E}.
	\end{equation}
	Applying \eqref{eq:UB-Orthogo-Image} with $ F = \piV E $, we obtain \eqref{eq:UB-Orthogon} from \eqref{eq:LipCap}.

	Finally we move to \ref{itm:phi-UB-brown}. By \autoref{lem:LevyCts} and \autoref{lem:FBM-Cap}, we have almost surely that
	\begin{equation*}
		\capProf{r}{s}{m}{\Balph{E}} \lesssim_{m,\alpha} [\log(1/\Phi(r))]^{m} \capProfAlpha{r}{s}{m}{E}.
	\end{equation*}
	Hence applying \eqref{eq:UB-Orthogo-Image} with $ F = \Balph{E} $ gives that almost surely, 
	\begin{equation*}
		\sumProfile{r}{s}{\Balph{E}} \lesssim [\log(1/\Phi(r))]^{m+1} \capProfAlpha{r}{s}{m}{E}.
	\end{equation*}
	This completes the proof.
\end{proof}

\subsection{Lower bound}

We begin with an analog of \autoref{lem:thetaDim-CoverSumLB}.
For $ 0 \leq s \leq d $ and $ r > 0 $, define
\begin{equation}\label{eq:def-phi-truncatedKernel}
\kerPsiPhi{r}{s}{\Delta} \coloneqq \begin{cases}
		\Phi(r)^{-s} & \Delta \leq \Phi(r)     \\
		\Delta^{-s}  & \Phi(r) < \Delta \leq r \\
		0            & \Delta > r
	\end{cases} \quad \text{ for } \Delta \geq 0.
\end{equation}

\begin{lemma}\label{lem:phiDim-CoverSumLB}
	Let $ \Phi $ be an admissible function. Let $ 0 \leq s \leq d $, $ 0 < r \leq 1 $, and $ E \subset \euclid[d]$ be a non-empty compact set. If there exist $ \mu \in \calP(E) $, a Borel subset $ F \subset E $, and $ \gamma > 0 $ such that
	\begin{equation*}
		\int \kerPsiPhi{r}{s}{\xyAbs} \, d\mu(y) \leq \gamma \mFor \text{all } x\in F,
	\end{equation*}
	then
	\begin{equation*}
		\sumPhi{r}{E} \geq  \frac{\mu(F)}{\gamma}.
	\end{equation*}
\end{lemma}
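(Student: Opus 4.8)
The plan is to follow the proof of \autoref{lem:thetaDim-CoverSumLB} almost verbatim, with the scale $ r^{1/\theta} $ replaced by $ \Phi(r) $ and the truncated kernel $ \kerPsiTheta{r}{s}{\cdot} $ replaced by $ \kerPsiPhi{r}{s}{\cdot} $. The argument has three ingredients: (i) a pointwise ball-mass estimate $ \mu(B(x,\delta)) \leq \gamma\,\delta^{s} $, extracted from the hypothesis by localizing the potential integral to a ball; (ii) a comparison of an arbitrary admissible cover of $ F $ with balls centred on $ F $; and (iii) passing to the infimum over covers.

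First I would fix $ x \in F $ and $ \delta $ with $ \Phi(r) \leq \delta \leq r $. Since $ \Delta \mapsto \kerPsiPhi{r}{s}{\Delta} $ is non-increasing and, by \eqref{eq:def-phi-truncatedKernel}, satisfies $ \kerPsiPhi{r}{s}{\delta} \geq \delta^{-s} $ on this range (with equality for $ \Phi(r) < \delta \leq r $, and $ \kerPsiPhi{r}{s}{\Phi(r)} = \Phi(r)^{-s} $ at the left endpoint), we get $ \kerPsiPhi{r}{s}{\xyAbs} \geq \delta^{-s} $ whenever $ y \in B(x,\delta) $. Restricting the integral in the hypothesis to $ B(x,\delta) $ then gives $ \gamma \geq \delta^{-s}\mu(B(x,\delta)) $, that is,
\[
	\delta^{s} \geq \frac{\mu(B(x,\delta))}{\gamma} \mForall x \in F, \quad \Phi(r) \leq \delta \leq r.
\]

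Next, let $ \{U_{i}\} $ be any cover of $ F $ with $ \Phi(r) \leq \abs{U_{i}} \leq r $; discarding the sets disjoint from $ F $ and choosing $ x_{i} \in F \intersect U_{i} $ for the remaining ones, we have $ U_{i} \subset B(x_{i}, \abs{U_{i}}) $, hence $ F \subset \Union_{i} B(x_{i}, \abs{U_{i}}) $. Applying the displayed bound with $ \delta = \abs{U_{i}} $ and summing yields $ \sum_{i}\abs{U_{i}}^{s} \geq \gamma^{-1}\sum_{i}\mu(B(x_{i},\abs{U_{i}})) \geq \gamma^{-1}\mu(F) $, and taking the infimum over all such covers gives $ \sumPhi{r}{E} \geq \sumPhi{r}{F} \geq \mu(F)/\gamma $, as desired. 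I do not expect any genuine obstacle: the only feature of the scale $ r^{1/\theta} $ used in the $ \theta $-case is the monotonicity of the truncated kernel together with the identity $ \kerPsiTheta{r}{s}{\delta} = \delta^{-s} $ on the middle range, and both properties transfer directly to $ \kerPsiPhi $ from \eqref{eq:def-phi-truncatedKernel}; the only mild point to check is that the ball-mass estimate remains valid at the closed left endpoint $ \delta = \Phi(r) $, where $ \kerPsiPhi{r}{s}{\Phi(r)} = \Phi(r)^{-s} = \delta^{-s} $.
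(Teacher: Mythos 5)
Your proposal is correct and matches the paper exactly: the paper itself states that \autoref{lem:phiDim-CoverSumLB} follows from a proof similar to that of \autoref{lem:thetaDim-CoverSumLB}, and you have correctly reconstructed that argument, replacing $r^{1/\theta}$ by $\Phi(r)$ and using the monotonicity of $\kerPsiPhi{r}{s}{\cdot}$ together with the identity $\kerPsiPhi{r}{s}{\delta} = \delta^{-s}$ for $\Phi(r) \leq \delta \leq r$. The check at the closed left endpoint $\delta = \Phi(r)$ is indeed the only point worth noting, and you have handled it.
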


\autoref{lem:phiDim-CoverSumLB} follows from a similar proof of \autoref{lem:thetaDim-CoverSumLB}. Below we provide two lemmas showing that there are some appropriate transversalities in \autoref{set:orthogon} and \autoref{set:brownian}, which are the analogs of \autoref{lem:paramMeasure}.

\begin{lemma}[{\cite[Lemma 3.11]{Mattila1995}}]\label{lem:trans-orthon}
	In \autoref{set:orthogon}, let $ x , y \in \euclid[d] $ and $ r > 0 $. Then
	\begin{equation}\label{eq:trans-orthogon}
		\gamma_{d,m}\{ V \in G(d, m) \colon \xyPiV \leq r \} \lesssim_{d,m} \kerGeoCount{r}{m}{\xyAbs}
	\end{equation}
	where $ \kerGeoCount{r}{m}{\xyAbs} $ is as in \eqref{eq:def-ker-geoCounting}.
\end{lemma}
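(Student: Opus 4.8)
The plan is to reduce \eqref{eq:trans-orthogon} to a standard measure estimate on the unit sphere, exploiting the rotational invariance of $ \gamma_{d,m} $. We may assume $ x \neq y $, since otherwise the left-hand side equals $ 1 $ while $ \kerGeoCount{r}{m}{0} = 1 $. First I would use that $ \piV x - \piV y = \piV(x-y) $ for every $ V \in G(d,m) $, so that setting $ z := (x-y)/\xyAbs \in S^{d-1} $ and $ t := r/\xyAbs $ gives
\begin{equation*}
	\gamma_{d,m}\{ V \in G(d, m) \colon \xyPiV \leq r \} = \gamma_{d,m}\{ V \in G(d, m) \colon \abs{\piV z} \leq t \}.
\end{equation*}
If $ t \geq 1 $ the right-hand side is at most $ 1 = \kerGeoCount{r}{m}{\xyAbs} $ and there is nothing to prove, so I would assume $ 0 < t < 1 $, in which case $ \kerGeoCount{r}{m}{\xyAbs} = t^{m} $ and it suffices to show $ \gamma_{d,m}\{ V \colon \abs{\piV z} \leq t \} \lesssim_{d,m} t^{m} $.

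Next I would realise $ \gamma_{d,m} $ as the pushforward of the Haar probability measure on the orthogonal group $ O(d) $ under $ g \mapsto g V_{0} $, where $ V_{0} := \euclid[m] \times \{0\} \subset \euclid $. For $ V = g V_{0} $ the orthogonal projection onto $ V $ is $ \piV = g P_{V_{0}} g^{-1} $, so that $ \abs{\piV z} = \abs{P_{V_{0}}(g^{-1}z)} $ because $ g $ is an isometry. Since $ z $ is a fixed unit vector and $ g $ is Haar-distributed, $ g^{-1} z $ is uniformly distributed on $ S^{d-1} $; writing $ u = (u_{1}, \ldots, u_{d}) $ for such a point and $ \sigma $ for the normalized surface measure on $ S^{d-1} $, this yields
\begin{equation*}
	\gamma_{d,m}\{ V \colon \abs{\piV z} \leq t \} = \sigma\{ u \in S^{d-1} \colon u_{1}^{2} + \cdots + u_{m}^{2} \leq t^{2} \}.
\end{equation*}

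It then remains to prove the elementary geometric estimate $ \sigma\{ u \in S^{d-1} \colon u_{1}^{2} + \cdots + u_{m}^{2} \leq t^{2} \} \lesssim_{d,m} t^{m} $ for $ 0 < t < 1 $ (the case $ m = d $ never arises, as the constraint would force $ t \geq 1 $, so $ m < d $). I would split $ u = (w, v) $ with $ w \in \euclid[m] $, $ v \in \euclid[d-m] $, $ \abs{w}^{2} + \abs{v}^{2} = 1 $, and integrate out $ v $: the set in question fibres over the ball $ \{ w \in \euclid[m] \colon \abs{w} \leq t \} $ with fibres scaled copies of $ S^{d-m-1} $, and by the coarea formula (or a direct parametrization) the resulting conditional surface-measure density in $ w $ involves only powers of $ (1-\abs{w}^{2}) $, which stay between fixed positive constants on $ \{ \abs{w} \leq t \} \subset \{ \abs{w} \leq 1/2 \} $. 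Hence the measure is bounded by a constant depending only on $ d, m $ times the $ m $-dimensional volume of $ \{ \abs{w} \leq t \} $, namely $ c_{m} t^{m} $. Combining the three displays gives \eqref{eq:trans-orthogon}. The only mild subtlety is making the reduction to Haar measure on $ O(d) $ precise — checking $ \abs{\piV z} = \abs{P_{V_{0}}(g^{-1}z)} $ and that $ g^{-1} z \sim \sigma $ — while the spherical estimate itself is routine; this is precisely the content of \cite[Lemma 3.11]{Mattila1995}.
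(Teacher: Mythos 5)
The paper does not give a proof of this lemma; it is quoted directly as \cite[Lemma~3.11]{Mattila1995}, so there is no in-paper argument to compare against. Your reconstruction is correct and is essentially the proof one finds in Mattila: reduce by linearity and scaling to a unit vector $z$ and a threshold $t=r/\xyAbs$, pass to the Haar measure on $O(d)$ via $\gamma_{d,m}=g_{*}\!\left(\text{Haar}\right)$ with $V=gV_0$, use $\abs{\piV z}=\abs{P_{V_0}(g^{-1}z)}$ and the fact that $g^{-1}z$ is uniform on $S^{d-1}$, and then estimate the surface measure of the slab $\{u\in S^{d-1}\colon u_1^2+\cdots+u_m^2\le t^2\}$ by $\lesssim_{d,m}t^m$.

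One small point to tidy up: your spherical estimate argues via the density $(1-\abs{w}^2)^{(d-m-2)/2}$ being bounded on $\{\abs{w}\le 1/2\}$, which only covers $t\le 1/2$. For $1/2<t<1$ the density can blow up near $\abs{w}=1$ when $d=m+1$, but the bound is then trivial since the left side is a probability $\le 1\le 2^m t^m$. You should state this (or equivalently prove $\lesssim t^m$ only for $t\le 1/2$ and absorb the rest into the constant). With that caveat the proof is complete and sound.
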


\begin{lemma} \label{lem:trans-FBM}
	In \autoref{set:brownian}, let $ x,y \in \euclid $ and $ r > 0 $. Then
	\begin{equation}\label{eq:trans-FBM}
		\bbP\{ \xyBalpha \leq r \} \lesssim_{m} \kerGeoCount{r^{1/\alpha}}{\alpha m}{\xyAbs}.
	\end{equation}
\end{lemma}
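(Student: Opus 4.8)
The plan is to reduce \eqref{eq:trans-FBM} directly to a small-ball estimate for a standard Gaussian vector, using the explicit law of the increment recorded in \autoref{set:brownian}; this is the \autoref{set:brownian} counterpart of \autoref{lem:trans-orthon}. Fix $x, y \in \euclid$. If $x = y$, then $\xyBalpha = 0 \leq r$, so the left-hand side of \eqref{eq:trans-FBM} equals $1$, while the right-hand side $\kerGeoCount{r^{1/\alpha}}{\alpha m}{0}$ also equals $1$ under the convention $(u/0)^{\tau} = +\infty$ in \eqref{eq:def-ker-geoCounting}; hence there is nothing to prove, and we may assume $x \neq y$. By \autoref{set:brownian}, the random vector $\Balph{x} - \Balph{y}$ is multivariate normal on $\euclid[m]$ with mean $0$ and covariance $\xyAbs^{2\alpha} I_{m}$, and so has the same distribution as $\xyAbs^{\alpha} Z$, where $Z$ is a standard Gaussian vector on $\euclid[m]$. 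Consequently
\begin{equation*}
	\bbP\{ \xyBalpha \leq r \} = \bbP\left\{ \abs{Z} \leq r\xyAbs^{-\alpha} \right\}.
\end{equation*}

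Next I would invoke the elementary bound $\bbP\{ \abs{Z} \leq t \} \lesssim_{m} \min\{1, t^{m}\}$, valid for all $t > 0$: this probability never exceeds $1$, and since the density of $Z$ is bounded above by $(2\pi)^{-m/2}$, it is at most $(2\pi)^{-m/2}$ times the Lebesgue measure of $\{ z \in \euclid[m] \colon \abs{z} \leq t \}$, which equals a constant depending only on $m$ times $t^{m}$.

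Applying this bound with $t = r\xyAbs^{-\alpha}$ and rewriting $r^{m}\xyAbs^{-\alpha m} = \left(r^{1/\alpha}\xyAbs^{-1}\right)^{\alpha m}$ to match the normalization in \eqref{eq:def-ker-geoCounting} gives
\begin{equation*}
	\bbP\{ \xyBalpha \leq r \} \lesssim_{m} \min\left\{ 1, \left(\frac{r^{1/\alpha}}{\xyAbs}\right)^{\alpha m} \right\} = \kerGeoCount{r^{1/\alpha}}{\alpha m}{\xyAbs},
\end{equation*}
which is exactly \eqref{eq:trans-FBM}. There is no genuine obstacle here: the only points requiring mild care are stating the Gaussian small-ball estimate in the truncated ($\min$) form, so that the bound remains correct in the trivial regime $r^{1/\alpha} \geq \xyAbs$, and keeping track of the exponents so that $r^{m}\xyAbs^{-\alpha m}$ is written in the form appearing in \eqref{eq:def-ker-geoCounting}.
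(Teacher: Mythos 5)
Your proof is correct and follows essentially the same route as the paper: both reduce to the Gaussian small-ball estimate $\bbP\{|Z|\leq t\}\lesssim_m \min\{1,t^m\}$ via a density bound, then rewrite $r^m\xyAbs^{-\alpha m}$ as $(r^{1/\alpha}/\xyAbs)^{\alpha m}$ to match \eqref{eq:def-ker-geoCounting}. The only cosmetic difference is that the paper enlarges the event coordinatewise and factors the probability over the $m$ independent one-dimensional increments from \eqref{eq:FBM-dist}, whereas you treat the $m$-dimensional increment as $\xyAbs^\alpha Z$ in one step — the underlying estimate is identical, and your explicit handling of the degenerate case $x=y$ is a harmless addition.
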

\begin{proof}
	By \eqref{eq:FBM-dist},
	\begin{align*}
		\bbP \{ \xyBalpha \leq r \} & \leq \bbP \{ \Abs{ B_{\alpha, i}(x) - B_{\alpha, i}(y) } \leq r \text{ for all }  1\leq i \leq m \} \\
		& = \left ( \frac{1}{\sqrt{2\pi} \xyAbs^{\alpha} } \int_{\abs{t} \leq r}  \exp \left( - \frac{t^{2} }{ 2\xyAbs^{2\alpha} } \right)  \, dt \right )^{m} \\
		& \leq \left ( \frac{1}{ \xyAbs^{\alpha} } \int_{\abs{t} \leq r} 1  \, dt \right )^{m} \\
		& = 2^{m} \frac{r^{m}}{\xyAbs^{\alpha m}}\\
		& \lesssim_{m} \left(  \frac{r^{1/\alpha}}{\xyAbs} \right)^{\alpha m}.
	\end{align*}
	Since $ \bbP(A) \leq 1 $ for all events $ A \subset \Omega $, we have
	\begin{equation*}
		\bbP \{ \xyBalpha \leq r \} \lesssim_{m} \min \left\{ 1, \left(  \frac{r^{1/\alpha}}{\xyAbs} \right)^{\alpha m} \right\} = \kerGeoCount{r^{1/\alpha}}{\alpha m}{\xyAbs}.
	\end{equation*}
	This finishes the proof.
\end{proof}

As an analog of \autoref{lem:thetaDim-paraMeasure}, the following lemma reveals a unified computational scheme for the integrals over parameters in various contexts.

\begin{proposition}\label{lem:Phi-TransIntegral} Let $ \Phi $ be an admissible function and $ 0 < r \leq 1 $.

	\begin{enumerate}[(i)]
		\item In \autoref{set:selfaffine}, assume $ \norm{T_{j}} < 1/2 $ for $ 1 \leq j \leq m $. Let $ 0 \leq s \leq d $ and $ x, y \in \Sigma $. Then
		      \begin{equation*}
			      \int_{B_{\rho}} \kerPsiPhi{r}{s}{\DeltaPiXY} \, d\mathbf{a} \lesssim_{\rho, d} \log(1/\Phi(r)) \kerPhi{r}{\xWy}
		      \end{equation*}
		      where  $ B_{\rho} $ denotes the closed ball in $ \euclid[dm] $ centered at $ 0 $ with radius $ \rho > 0 $.

		\item In \autoref{set:orthogon}, let $  0 \leq s \leq m $ and $ x, y \in \euclid[d] $. Then
		      \begin{equation*}
			      \int_{G(d,m)} \kerPsiPhi{r}{s}{\xyPiV} \, d\gamma_{d,m}(V) \lesssim_{d,m} \log(1/\Phi(r)) \kerProf{r}{s}{m}{\xyAbs}.
		      \end{equation*}

		\item\label{itm:LB-brown} In \autoref{set:brownian}, let $  0 \leq s \leq m $ and $ x, y \in \euclid[d] $. Then
		      \begin{equation*}
			      \int_{\Omega} \kerPsiPhi{r}{s}{\xyBalpha} \, d\bbP(\omega) \lesssim_{m} \log(1/\Phi(r)) \kerAlphaPhi{r^{1/\alpha}}{\alpha s}{\alpha m}{\xyAbs}.
		      \end{equation*}
	\end{enumerate}
\end{proposition}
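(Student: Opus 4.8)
The plan is to prove all three parts by the same argument that established \autoref{lem:thetaDim-paraMeasure}, feeding in the appropriate transversality estimate as the only external input: \eqref{eq:affine-trans} for part~(i), \eqref{eq:trans-orthogon} for part~(ii), and \eqref{eq:trans-FBM} for part~(iii). Writing $(X,\nu)$ for the parameter space with its measure (i.e.\ $(B_\rho,\calL^{dm})$, $(G(d,m),\gamma_{d,m})$, or $(\Omega,\bbP)$) and $\Delta_\xi$ for the corresponding displacement ($\DeltaPiXY$, $\xyPiV$, or $\xyBalpha$), the template is: split the truncated kernel according to the three-piece definition \eqref{eq:def-phi-truncatedKernel}; use the layer-cake identity to rewrite $\int\kerPsiPhi{r}{s}{\Delta_\xi}\,d\nu$ as $\int_0^{\Phi(r)^{-s}}\nu\{\Delta_\xi\le\min\{r,t^{-1/s}\}\}\,dt$; split this integral at $t=r^{-s}$ (where $\min\{r,t^{-1/s}\}$ switches from $r$ to $t^{-1/s}$); bound each piece via the transversality estimate; and change variables to recover the defining $\max$ of the relevant $\Phi$-kernel.

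For part~(i) this yields, after the substitution $u=t^{-1/s}$, the bound
\[
\int_{B_\rho}\kerPsiPhi{r}{s}{\DeltaPiXY}\,d\bfa\;\lesssim_{\rho,d}\;r^{-s}\kerZ{r}{\xWy}+s\int_{\Phi(r)}^{r}u^{-(s+1)}\kerZ{u}{\xWy}\,du,
\]
and since $u^{-s}\kerZ{u}{\xWy}\le\kerPhi{r}{\xWy}$ for every $\Phi(r)\le u\le r$ by \eqref{eq:def-ker-affine-phi}, the right-hand side is at most $\bigl(1+s\log(r/\Phi(r))\bigr)\kerPhi{r}{\xWy}\lesssim_{d}\log(1/\Phi(r))\,\kerPhi{r}{\xWy}$, using $r\le1$ to replace $\log(r/\Phi(r))$ by $\log(1/\Phi(r))$. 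Part~(ii) is the verbatim same computation, replacing $\calL^{dm}$ by $\gamma_{d,m}$, $\DeltaPiXY$ by $\xyPiV$, the transversality \eqref{eq:affine-trans} by \eqref{eq:trans-orthogon} (so $\kerZ{\cdot}{\xWy}$ becomes $\kerGeoCount{\cdot}{m}{\xyAbs}$), and $\kerPhi{r}{\xWy}$ by $\kerProf{r}{s}{m}{\xyAbs}$ via \eqref{eq:def-ker-profile-phi}.

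Part~(iii) runs through the same steps but carries one extra bookkeeping point from the $\alpha$-rescaling. Transversality \eqref{eq:trans-FBM} gives $\bbP\{\xyBalpha\le t^{-1/s}\}\lesssim_m\kerGeoCount{t^{-1/(\alpha s)}}{\alpha m}{\xyAbs}$, so after splitting at $t=r^{-s}$ one substitutes $v=t^{-1/(\alpha s)}$ rather than $u=t^{-1/s}$; the endpoint $t=\Phi(r)^{-s}$ then becomes $v=\Phi(r)^{1/\alpha}$, which is precisely $\Phi_\alpha(r^{1/\alpha})$ by \eqref{eq:def-PhiAlpha}, while $t=r^{-s}$ becomes $v=r^{1/\alpha}$. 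The integral becomes $\alpha s\int_{\Phi_\alpha(r^{1/\alpha})}^{r^{1/\alpha}}v^{-(\alpha s+1)}\kerGeoCount{v}{\alpha m}{\xyAbs}\,dv$, and since $v^{-\alpha s}\kerGeoCount{v}{\alpha m}{\xyAbs}\le\kerAlphaPhi{r^{1/\alpha}}{\alpha s}{\alpha m}{\xyAbs}$ for $v$ in that range (by the definition of this kernel, cf.\ \eqref{eq:def-ker-profile-phi}), one concludes with $\alpha s\int_{\Phi_\alpha(r^{1/\alpha})}^{r^{1/\alpha}}v^{-1}\,dv=s\log(r/\Phi(r))\le s\log(1/\Phi(r))$; the leftover boundary term $r^{-s}\kerGeoCount{r^{1/\alpha}}{\alpha m}{\xyAbs}=(r^{1/\alpha})^{-\alpha s}\kerGeoCount{r^{1/\alpha}}{\alpha m}{\xyAbs}\le\kerAlphaPhi{r^{1/\alpha}}{\alpha s}{\alpha m}{\xyAbs}$ is absorbed the same way, with the factors $\alpha$ and $s\le m$ harmless for the constant.

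The only genuinely new point relative to \autoref{lem:thetaDim-paraMeasure} is checking that the change of variables in part~(iii) lands exactly on $\Phi_\alpha$ and on the exponents $\alpha s,\alpha m$; this is precisely what the definition \eqref{eq:def-PhiAlpha} of $\Phi_\alpha$ and the appearance of $\alpha m$ in \eqref{eq:trans-FBM} are engineered to ensure, so no real obstacle arises. Everything else is the layer-cake/substitution bookkeeping already carried out for the $\theta$-kernels; as there, the estimates are understood for $r$ small enough that $\log(1/\Phi(r))\ge1$ (so the $1+\cdots$ can be absorbed), which is automatic since $\Phi$ is admissible.
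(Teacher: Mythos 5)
Your proof is correct and follows essentially the same route as the paper: a layer-cake decomposition of the truncated kernel $\kerPsiPhi{r}{s}{\cdot}$, a split at $t=r^{-s}$, the transversality estimate (\eqref{eq:affine-trans}, \eqref{eq:trans-orthogon}, or \eqref{eq:trans-FBM}) applied under the integral, and a change of variables to recognize the defining $\max$ of the relevant $\Phi$-kernel. The paper packages this as a single abstract computation over $(\Lambda,\nu,\Delta_\lambda,K_u)$ ending in $(s+1)\log(1/\Phi(r))\max_{\Phi(r)\le u\le r}u^{-s}K_u$ and then substitutes the three lemmas; you instead run the cases in parallel, which in particular makes explicit, via the substitution $v=t^{-1/(\alpha s)}$, the short $\alpha$-rescaling step that the paper's template leaves to the reader when identifying $\max_{\Phi(r)\le u\le r}u^{-s}\kerGeoCount{u^{1/\alpha}}{\alpha m}{\xyAbs}$ with $\kerAlphaPhi{r^{1/\alpha}}{\alpha s}{\alpha m}{\xyAbs}$.
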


\begin{proof}
	We begin with a general computational scheme assuming the abstract transversality \eqref{eq:tmp-transversality}. Then the proof is completed by substituting \eqref{eq:tmp-transversality} with the corresponding transversality in different settings.

	Let $ (\Lambda, \nu) $ be a measure space and $ \lambda \mapsto \Delta_{\lambda} $ be a measurable function from $ \Lambda $ to $ (0, +\infty)$. Suppose
	\begin{equation}\label{eq:tmp-transversality}
		\nu\{\lambda  \colon \Delta_{\lambda} \leq r \} \lesssim K_{r} \mFor r > 0,
	\end{equation}
	where $ r \mapsto K_{r} $ is a measurable function. According to \eqref{eq:def-phi-truncatedKernel},
	\begin{align*}
		  \int_{\Lambda} & \kerPsiPhi{r}{s}{\Delta_{\lambda}} \, d\nu(\lambda)                                                                                                                                       \\ & = \int_{\{\lambda  \colon \Delta_{\lambda} \leq \Phi(r) \}} \Phi(r)^{-s} \, d\nu(\lambda) + \int_{\{ \lambda  \colon \Phi(r) < \Delta_{\lambda} \leq r \}} \Delta_{\lambda}^{-s} \, d\nu(\lambda)  \\
		 & = \Phi(r)^{-s} \nu \{\lambda  \colon \Delta_{\lambda} \leq \Phi(r) \} +  \int_{0}^{\infty} \nu \{\lambda  \colon  \Phi(r) < \Delta_{\lambda} \leq r,\; \Delta_{\lambda}^{-s} \geq t \} \, dt \\
		 & = \Phi(r)^{-s} \nu\{\lambda  \colon \Delta_{\lambda} \leq \Phi(r) \} + \int_{0}^{\Phi(r)^{-s}} \nu \{\lambda \colon \Phi(r) <\Delta_{\lambda} \leq \min\{r, t^{-1/s}\} \} \, dt              \\
		 & =  \int_{0}^{\Phi(r)^{-s}} \nu \{\lambda \colon \Delta_{\lambda} \leq \min\{r, t^{-1/s}\} \} \, dt                                                                                                      \\
		 & =  \int_{0}^{r^{-s}} \nu \{\lambda  \colon  \Delta_{\lambda} \leq r \} \, dt +  \int_{r^{-s}}^{\Phi(r)^{-s}} \nu \{\lambda \colon  \Delta_{\lambda} \leq t^{-1/s} \} \, dt                    \\
		 & = r^{-s} \nu \{\lambda \colon  \Delta_{\lambda} \leq r \} + \int_{r^{-s}}^{\Phi(r)^{-s}} \nu \{\lambda \colon  \Delta_{\lambda} \leq t^{-1/s} \} \, dt.
	\end{align*}
	By changing variable with $ u = t^{-1/s} $,
	\begin{equation}\label{eq:abstract-computation}
		\begin{aligned}
			  \int_{\Lambda} & \kerPsiPhi{r}{s}{\Delta_{\lambda}}  \, d\nu(\lambda)                                                                                            \\
			 & =  r^{-s} \nu \{\lambda \colon  \Delta_{\lambda} \leq r \} + s \int_{\Phi(r)}^{r} u^{-(s+1)} \nu \{\lambda \colon  \Delta_{\lambda} \leq u \} \, du \\
			 & \lesssim r^{-s} K_{r} + s \int_{\Phi(r)}^{r} u^{-(s+1)} K_{u} \, du                                                                                                     & \text{by } \eqref{eq:tmp-transversality} \\
			 & =   \left (1 + s \int_{\Phi(r)}^{r} u^{-1}\, du \right )  \max_{\Phi(r) \leq u \leq r} u^{-s} K_{u}                                                                     \\
			 & \leq (s+1) \log(1/\Phi(r)) \left ( \max_{\Phi(r) \leq u \leq r} u^{-s} K_{u} \right ),
		\end{aligned}
	\end{equation}
	where the last inequality follows from $  \int_{\Phi(r)}^{r} u^{-1}\, du \leq \log (1/\Phi(r))$. 
	
	Finally by replacing \eqref{eq:tmp-transversality} with \autoref{lem:paramMeasure}, \autoref{lem:trans-orthon}, and \autoref{lem:trans-FBM} respectively, we finish the proof by \eqref{eq:abstract-computation}.
\end{proof}

\subsection{Proof of \autoref{thm:PhiDim}}
\begin{proof}[Proof of \autoref{thm:PhiDim}]
	Based on \autoref{lem:Phi-CoverSumUB} and \autoref{lem:Phi-TransIntegral}, the statements of different settings in \autoref{thm:PhiDim} result from similar arguments. Hence to avoid repetitions while maintaining clarity, we exemplify the arguments by showing \eqref{eq:GenThm-ortho-UB} and \eqref{eq:GenThm-brown}. Without loss of generality, we assume that $ E $ is compact.
	
	For \eqref{eq:GenThm-ortho-UB}, we show $ \lDim{\Phi} \piV E \leq \lDim{\Phi}^{m} E $ while the proof of $ \uDim{\Phi} \piV E \leq \uDim{\Phi}^{m} E $ is similar. Let $ s > t > \lDim{\Phi}^{m} E $. Then
	\begin{equation}\label{eq:C-t-tozero}
		\liminf_{r\to 0} \capProf{r}{t}{m}{E} = 0.
	\end{equation}
	By \eqref{eq:GenThm-Cond}, there is some $ A > 0 $ such that
	\begin{equation*}
		r^{s-t} \log (1/\Phi(r)) \leq A \mFor r > 0.
	\end{equation*}
	Hence by \ref{itm:phi-UB-ortho} of \autoref{lem:Phi-CoverSumUB} and \eqref{eq:GenDim-justify},
	\begin{equation*}
		\sumPhi[s]{r}{\piV E} \lesssim \log(1/\Phi(r)) \capProf{r}{s}{m}{E} \leq r^{s-t} \log(1/\Phi(r)) \capProf{r}{t}{m}{E}  \leq A \, \capProf{r}{t}{m}{E}.
	\end{equation*}
	By \eqref{eq:C-t-tozero}, taking $ \liminf_{r\to 0} $ on both sides of the above inequality implies 
	\begin{equation*}
		\liminf_{r\to 0} \sumPhi{r}{\piV E} = 0. 
	\end{equation*}	
	This shows $ \lDim{\Phi} \piV E \leq s $ by \autoref{lem:char-PhiDim}. Letting $ s \to \lDim{\Phi}^{m} E $ gives
	\begin{equation*}
		 \lDim{\Phi} \piV E \leq \lDim{\Phi}^{m} E.
	\end{equation*}
	
	Next we prove \eqref{eq:GenThm-brown} by showing that almost surely $ \uDim{\Phi} B_{\alpha}(E) = \frac{1}{\alpha}\uDim{\Phi_{\alpha}}^{\alpha m} E $ while the proof for almost surely $ \lDim{\Phi} B_{\alpha}(E) = \frac{1}{\alpha}\lDim{\Phi_{\alpha}}^{\alpha m} E $ is similar. Let $ s > t > \frac{1}{\alpha}\uDim{\Phi_{\alpha}}^{\alpha m} E $. Then
	\begin{equation}\label{eq:brown-UB-toZero}
	\limsup_{r\to 0} \capProfAlpha{r}{t}{m}{E} =	\limsup_{r\to 0} \capAlphaPhi{r}{\alpha t}{\alpha m}{E} = 0.
	\end{equation}
	By \eqref{eq:GenThm-Cond}, there is some $ A > 0 $ such that
	\begin{equation}\label{eq:A-UB}
		r^{(s-t)/(m+1)} \log (1/\Phi(r)) \leq A \mFor r > 0.
	\end{equation}
	By \ref{itm:phi-UB-brown} of \autoref{lem:Phi-CoverSumUB}, almost surely,
	\begin{align*}
		\sumPhi[s]{r}{\Balph{E}} & \lesssim [\log(1/\Phi(r))]^{m+1} \capProfAlpha{r}{s}{m}{E} \\& \leq r^{(s-t)} [\log(1/\Phi(r))]^{m+1} \capProfAlpha{r}{t}{m}{E}  & \text{by } \eqref{eq:GenDim-justify}\\& \leq A^{m+1} \, \capAlphaPhi{r^{1/\alpha}}{\alpha t}{\alpha m}{E} & \text{by }\eqref{eq:A-UB}.
	\end{align*}
	By taking $ \limsup_{r\to 0} $ on both sides, it follows from \eqref{eq:brown-UB-toZero} that almost surely,
	\begin{equation*}
		\limsup_{r\to 0} \sumPhi[s]{r}{\Balph{E}} = 0. 
	\end{equation*}	
	Then $ \uDim{\Phi}\Balph{E} \leq s $ by \autoref{lem:char-PhiDim}. Letting $ s \to \frac{1}{\alpha} \uDim{\Phi_{\alpha}}^{\alpha m} E $ gives
	\begin{equation*}
		\uDim{\Phi}\Balph{E} \leq \frac{1}{\alpha}\uDim{\Phi_{\alpha}}^{\alpha m} E.
	\end{equation*}
	Hence it suffices to prove that almost surely
	\begin{equation}\label{eq:brown-LB}
		 \uDim{\Phi}\Balph{E} \geq \frac{1}{\alpha}\uDim{\Phi_{\alpha}}^{\alpha m} E.
	\end{equation}
	Suppose $ \frac{1}{\alpha}\uDim{\Phi_{\alpha}}^{\alpha m} E > 0 $, otherwise \eqref{eq:brown-LB} holds trivially. Let $ t < s < \frac{1}{\alpha}\uDim{\Phi_{\alpha}}^{\alpha m} E  $. Take a sequence $ (r_{k}) $ tending to $ 0 $ such that $ 0 < r_{k} \leq 2^{-k} $ and 
	\begin{equation}\label{eq:choose-rk}
		\limsup_{k\to\infty} \capAlphaPhi{r_{k}^{1/\alpha}}{\alpha s}{\alpha m}{E} = \limsup_{r\to 0} \capAlphaPhi{r}{\alpha s}{\alpha m}{E} > 0.
	\end{equation}
	By \autoref{lem:equil-meas}, for each $ k \in \bbN $ there is an equilibrium measure $ \mu_{k} $ on $ E $ for the kernel $ \kerProfAlpha{r_{k}}{s}{m}{\xyAbs} $. Write
	\begin{equation*}
		\gamma_{k} \coloneqq \frac{1}{ \capProfAlpha{r_{k}}{s}{m}{E} } = \iint \kerProfAlpha{r_{k}}{s}{m}{\xyAbs} \, d\mu_{k}(x) d\mu_{k}(y).
	\end{equation*}
	By \ref{itm:LB-brown} of \autoref{lem:Phi-TransIntegral},
	\begin{equation}\label{eq:brown-UB-Integral}
	\iint \int_{\Omega} \kerPsiPhi{r_{k}}{s}{\xyBalpha} \, d\bbP(\omega) \; d\mu_{k}(x) d\mu_{k}(y) \lesssim \log(1/\Phi(r_{k})) \gamma_{k}.
	\end{equation}
	Set $ \varepsilon := s - t $. By \eqref{eq:GenThm-Cond}, there is some $ A > 0 $ such that
	\begin{equation}\label{eq:brown-A-PhiEps}
		r^{\varepsilon/2} \log (1/\Phi(r)) \leq A \mFor r > 0.
	\end{equation}
	Then summing \eqref{eq:brown-UB-Integral} over $ k\in\bbN $ and using Fubini's theorem lead to
	\begin{align*}
		\int_{\Omega}  & \sum_{k=1}^{\infty} \left (  r_{k}^{\varepsilon} \gamma_{k}^{-1} \iint \kerPsiPhi{r_{k}}{s}{\xyBalpha} \, d\mu_{k}(x) d\mu_{k}(y)  \right) d\bbP(\omega)  \\
		& \lesssim \sum_{k=1}^{\infty} \log(1/\Phi(r_{k})) r_{k}^{\varepsilon} & \text{by } \eqref{eq:brown-UB-Integral}  \\
		& \leq A \sum_{k=1}^{\infty} r_{k}^{\varepsilon/2}  & \text{by }\eqref{eq:brown-A-PhiEps} \\
		& \leq A \sum_{k=1}^{\infty} 2^{-k\varepsilon/2} < \infty & \text{by } r_{k}\leq 2^{-k}. 
	\end{align*}
	Hence almost surely there exists $ M > 0 $ such that
	\begin{equation*}
		\iint \kerPsiPhi{r_{k}}{s}{\abs{u-v}} \, dB_{\alpha}\mu_{k}(v) dB_{\alpha}\mu_{k}(u) \leq M \gamma_{k} r_{k}^{-\varepsilon} \mFor \text{all } k \in \bbN.
	\end{equation*}
	Then for each $ k\in \bbN  $ there is some $ F_{k} \subset \Balph{E} $ such that $ B_{\alpha}\mu_{k}(F_{k}) \geq 1/2 $ and
	\begin{equation*}
		\int \kerPsiPhi{r_{k}}{s}{\abs{u-v}} \, dB_{\alpha}\mu_{k}(v) \leq 2 M \gamma_{k} r_{k}^{-\varepsilon} \mFor \text{all } u \in F_{k}.
	\end{equation*}
	It follows from \autoref{lem:phiDim-CoverSumLB} that for each $ k\in \bbN $,
	\begin{equation}\label{eq:Sum>=Cap}
		\sumPhi{r_{k}}{\Balph{E}} \geq \frac{1}{2}  (2M\gamma_{k} r_{k}^{-\varepsilon})^{-1} \gtrsim r_{k}^{\varepsilon}\gamma_{k}^{-1} = r_{k}^{\varepsilon} \, \capAlphaPhi{r_{k}^{1/\alpha}}{\alpha s}{\alpha m}{E}.
	\end{equation}
	Finally, almost surely we have
	\begin{align*}
	 	\limsup_{r\to 0} \sumPhi[t]{r}{\Balph{E}} & \geq \limsup_{k\to\infty} \sumPhi[t]{r_{k}}{\Balph{E}} \\
	 	& \geq \limsup_{k\to\infty} r_{k}^{-(s-t)} \sumPhi[s]{r_{k}}{\Balph{E}} & \text{by } \eqref{eq:sumPhi-compare} \\
	 	& \gtrsim \limsup_{k\to\infty} r_{k}^{-(s-t)} r_{k}^{\varepsilon} \, \capProfAlpha{r_{k}}{s}{m}{E} & \text{by }\eqref{eq:Sum>=Cap} \\
	 	& = \limsup_{k\to\infty} \capProfAlpha{r_{k}}{s}{m}{E} & \text{by } \varepsilon = s-t \\
	 	& > 0 & \text{by }\eqref{eq:choose-rk}
	\end{align*}
	This shows that almost surely,
	\begin{equation*}
		\uDim{\Phi} \Balph{E} \geq t.
	\end{equation*}
	Letting $ t \to \frac{1}{\alpha}\uDim{\Phi_{\alpha}}^{\alpha m} E  $ gives \eqref{eq:brown-LB}.
\end{proof}

\section{Final remarks}
In the section we give a few remarks.

In our main theorems, the assumption that $ \norm{T_{j}} < 1/2 $ for $ 1 \leq j \leq m $ can be weaken to $ \max_{i\neq j} (\norm{T_{i}} + \norm{T_{j}}) < 1 $. Indeed the first assumption is only used to guarantee the self-affine transversality in \autoref{lem:paramMeasure}. As pointed out in \cite[Proposition 9.4.1]{BaranyEtAl2022}, the second assumption is sufficient for the self\nobreakdash-affine transversality. 

In \cite[Definition 2.7]{Banaji2020}, the admissibility of $\Phi$  is assumed in the definitions of the \tPhiDims\ in some general metric spaces. However, in \autoref{thm:PhiDim} concerning the \tPhiDims\ in $\euclid$, we only require that $ \Phi $ is monotone and satisfies $0 < \Phi(r) \leq r$ instead of the admissibility.

There is no obstruction in adapting the arguments in \cite[Section 9]{FengEtAl2022} to estimate the Hausdorff dimensions of the exceptional sets for the \tPhiDims. For example, below we give one such result.

\begin{proposition}
	In \autoref{set:selfaffine}, assume $ \norm{T_{j}} < 1/2 $ for $ 1 \leq j \leq m $. Let $ \Phi $ be an admissible function satisfying \eqref{eq:GenThm-Cond}. Then for $ 0 < \delta < d $,
	\begin{equation*}\label{eq:except-lDim}
		\dimH \{ \bfa \in\euclid[dm] \colon \lDim{\Phi} \pia (E) < \lDim{C,\Phi}  E - \delta \} \leq dm - \delta,
	\end{equation*}
	and
	\begin{equation*}\label{eq:except-uDim}
		\dimH \{ \bfa \in\euclid[dm] \colon \uDim{\Phi} \pia(E) < \uDim{C,\Phi} E - \delta \} \leq dm - \delta .
	\end{equation*}
\end{proposition}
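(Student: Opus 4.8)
The plan is to run the proof of the almost-everywhere statement in \autoref{thm:PhiDim}\ref{itm:MT-affine} with Lebesgue measure on the parameter space replaced by a Frostman measure of the appropriate exponent. Write $ s_{0}:=\lDim{C,\Phi}E $ (the upper dimension is treated symmetrically, and is in fact simpler). Since $ \{\bfa\in\euclid[dm]:\lDim{\Phi}\pia(E)<s_{0}-\delta\} $ is the increasing union over $ \rho\in\bbN $ of $ A_{\rho}:=\{\bfa\in B_{\rho}:\lDim{\Phi}\pia(E)<s_{0}-\delta\} $, it suffices to prove $ \dimH A_{\rho}\le dm-\delta $ for each $ \rho>0 $. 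Suppose, for a contradiction, that $ \dimH A_{\rho}>dm-\delta $; choose $ \delta''\in(0,\delta) $ with $ dm-\delta''<\dimH A_{\rho} $ and invoke Frostman's lemma (the measurability of $ A_{\rho} $ being a routine matter, cf.\ \cite[Section 9]{FengEtAl2022}) to obtain a compact $ K\subseteq A_{\rho} $ and $ \eta\in\calP(K) $ with $ \eta(B(z,t))\le t^{dm-\delta''} $ for all $ z\in\euclid[dm] $ and $ t>0 $. It then suffices to show $ \lDim{\Phi}\pia(E)\ge s_{0}-\delta'' $ for $ \eta $-a.e.\ $ \bfa $, since $ s_{0}-\delta''>s_{0}-\delta $ and $ K\subseteq A_{\rho} $.

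The key new ingredient is a \emph{generalized transversality} upgrading \autoref{lem:paramMeasure}: for $ \eta $ as above and $ \norm{T_{j}}<1/2 $ $ (1\le j\le m) $,
\begin{equation*}
	\eta\{\bfa\in B_{\rho}:\DeltaPiXY\le r\}\lesssim_{\rho,d,\delta''}r^{-\delta''}\kerZ{r}{\xWy}\mFor x,y\in\Sigma
\end{equation*}
and all sufficiently small $ r>0 $. To prove it, observe that $ \bfa\mapsto\pia(x)-\pia(y) $ is a linear map whose operator norm is bounded by a constant depending only on $ d,m $ (here $ \norm{T_{j}}<1/2 $ enters). Hence, for $ r $ small, the set on the left is contained in a rectangular box each of whose sides is at least a constant (depending only on $ d,m $) times $ r $, while by \autoref{lem:paramMeasure} its Lebesgue measure, and therefore the volume of that box, is $ \lesssim_{\rho,d}\kerZ{r}{\xWy} $. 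A Frostman measure of exponent $ dm-\delta'' $ assigns to a box of volume $ V $ and shortest side $ \ell $ at most $ \lesssim_{d,\delta''}V\ell^{-\delta''} $ (cover the box by $ \lesssim_{d}V\ell^{-dm} $ balls of radius $ \ell $), and here $ \ell^{-\delta''}\lesssim_{d,m,\delta''}r^{-\delta''} $.

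Feeding this bound into the abstract computation \eqref{eq:abstract-computation} of \autoref{lem:Phi-TransIntegral}, with $ (\Lambda,\nu)=(B_{\rho},\eta) $, $ \Delta_{\bfa}=\DeltaPiXY $ and $ K_{u}=u^{-\delta''}\kerZ{u}{\xWy} $, and noting that $ \max_{\Phi(r)\le u\le r}u^{-s}(u^{-\delta''}\kerZ{u}{\xWy})=\kerPhi[s+\delta'']{r}{\xWy} $ by \eqref{eq:def-ker-affine-phi}, we obtain the analogue of \autoref{lem:Phi-TransIntegral}(i):
\begin{equation*}
	\int_{B_{\rho}}\kerPsiPhi{r}{s}{\DeltaPiXY}\,d\eta(\bfa)\lesssim_{\rho,d,\delta''}\log(1/\Phi(r))\,\kerPhi[s+\delta'']{r}{\xWy}.
\end{equation*}
From here one repeats the lower-bound argument in the proof of \autoref{thm:PhiDim} (the part establishing \eqref{eq:brown-LB}, and its lower-dimension analogue), with $ \bbP $ replaced by $ \eta $, $ B_{\alpha}\mu_{k} $ by $ \pia\mu_{k} $, and the profile kernel replaced by $ \kerPhi[s']{r_{k}}{\cdot} $ where $ s':=s+\delta'' $. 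Concretely: fix $ t<s $ with $ s'=s+\delta''<s_{0} $ (note $ s'<d $ since $ s<s_{0}-\delta''\le d-\delta'' $), put $ r_{k}=2^{-k} $, and take equilibrium measures $ \mu_{k}\in\calP(E) $ for $ \kerPhi[s']{r_{k}}{\xWy} $ with $ \gamma_{k}=1/\capPhi[s']{r_{k}}{E} $. Summing the displayed estimate over $ k $, applying Fubini, and using \eqref{eq:GenThm-Cond}, one gets for $ \eta $-a.e.\ $ \bfa $ that $ \sumPhi[s]{r_{k}}{\pia(E)}\gtrsim_{\bfa}r_{k}^{s-t}\gamma_{k}^{-1} $ for all large $ k $, whence by \eqref{eq:sumPhi-compare} that $ \sumPhi[t]{r_{k}}{\pia(E)}\gtrsim_{\bfa}\gamma_{k}^{-1}=\capPhi[s']{r_{k}}{E} $. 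Since $ s'<s_{0}=\lDim{C,\Phi}E $ forces $ \capPhi[s']{r}{E}\to\infty $ as $ r\to0 $, this already gives $ \limsup_{r\to0}\sumPhi[t]{r}{\pia(E)}=\infty $; for the upper dimension that is enough to conclude $ \uDim{\Phi}\pia(E)\ge t $ via \autoref{lem:char-PhiDim}, while for the lower dimension one passes from the dyadic scales $ r_{k} $ to all small $ r $ by the same scale-interpolation device used for the lower box and lower $ \Phi $-dimensions (cf.\ \cite[Section 9]{FengEtAl2022}), obtaining $ \liminf_{r\to0}\sumPhi[t]{r}{\pia(E)}=\infty $ and hence $ \lDim{\Phi}\pia(E)\ge t $. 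Letting $ t\uparrow s_{0}-\delta'' $ yields $ \lDim{\Phi}\pia(E)\ge s_{0}-\delta'' $ for $ \eta $-a.e.\ $ \bfa $, contradicting $ \eta(K)=1 $ and $ K\subseteq A_{\rho} $.

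The main obstacle is the generalized transversality: passing from Lebesgue measure to Frostman measures of exponent $ dm-\delta'' $ on the parameter space, with the loss quantified exactly by the factor $ r^{-\delta''} $ so that it merges with the kernel $ \kerPhi{r}{\xWy} $ into $ \kerPhi[s+\delta'']{r}{\xWy} $ — that is, raising the relevant capacity-dimension exponent by precisely $ \delta'' $. Secondary technical points are the measurability of $ A_{\rho} $ (needed for Frostman's lemma) and, in the lower-dimension case, the interpolation from a dyadic sequence of scales to all small scales.
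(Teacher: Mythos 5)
The paper does not actually prove this proposition: it states it as an example of what the arguments of \cite[Section 9]{FengEtAl2022} yield, so your proof is supplying something the paper only gestures at. Your overall route is the intended one — replace Lebesgue measure by a Frostman measure on the parameter space, upgrade the transversality estimate, and run the Borel--Cantelli argument from the proof of \autoref{thm:PhiDim} — and most of the ingredients are assembled correctly. The reduction to $A_{\rho}$, the Frostman step, the observation that the loss $r^{-\delta''}$ merges with the kernel to raise the exponent from $s$ to $s+\delta''$, and the application of the abstract computation \eqref{eq:abstract-computation} with $K_{u}=u^{-\delta''}\kerZ{u}{\xWy}$ are all right.

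Two points need attention. First, your justification of the generalized transversality has a logical slip: knowing $\calL^{dm}\{\bfa\in B_{\rho}:\DeltaPiXY\le r\}\lesssim\kerZ{r}{\xWy}$ tells you the \emph{set} has small measure, but does not by itself bound the volume of a box containing it (``its Lebesgue measure, and therefore the volume of that box'' is a non-sequitur). What you actually need is that, by inspection of the proof of \autoref{lem:paramMeasure}, the set $\{\bfa\in B_{\rho}:\DeltaPiXY\le r\}$ is (up to constants) a cylinder $B_{\rho}^{(d(m-1))}\times\mathcal{E}$ with $\mathcal{E}$ an ellipsoid in the $a_{j}$-coordinate having semi-axes $\approx\min\{\rho, r/\alpha_{k}(T_{\xWy})\}$, so that both the shortest side is $\gtrsim r$ \emph{and} the volume of the bounding box is $\lesssim_{\rho}\kerZ{r}{\xWy}$. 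Alternatively, just cite \cite[Lemma 9.4]{FengEtAl2022} directly, which is exactly your estimate with $\tau=dm-\delta''$; re-deriving it here is unnecessary.

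Second, and more substantively, the passage from the dyadic scales $r_{k}=2^{-k}$ to all small $r$ for the \emph{lower} $\Phi$-intermediate dimension is a genuine gap, not just a routine deferral. The natural interpolation (enlarge sets of diameter in $[\Phi(r),\Phi(r_{k}))$ up to $\Phi(r_{k})$) requires $\Phi(r_{k})/\Phi(r_{k+1})$ to be bounded, i.e.\ a doubling condition for $\Phi$, and this does \emph{not} follow from admissibility plus \eqref{eq:GenThm-Cond}: for example $\Phi(r)=\exp(-(\log(1/r))^{2})$ is admissible, satisfies \eqref{eq:GenThm-Cond}, yet $\Phi(2r)/\Phi(r)\approx r^{-2\log 2}\to\infty$. (For $\theta$-dimensions, where $\Phi(r)=r^{1/\theta}$, the ratio is $2^{1/\theta}$ and the interpolation works; that is why the paper can say ``the proof for the lower $\theta$-intermediate dimensions is similar''.) To fix this for general $\Phi$, one needs a $\Phi$-adapted sequence of scales finer than dyadic — for instance, subdividing each dyadic block $[2^{-k-1},2^{-k}]$ into $N_{k}$ points with equal increments of $\log\Phi$, with $N_{k}=2^{k\varepsilon'}$ for a suitable $\varepsilon'<\varepsilon/2$, which keeps the per-step ratio $\Phi(r_{n})/\Phi(r_{n+1})$ bounded while $\sum_{n}r_{n}^{\varepsilon/2}$ still converges by \eqref{eq:GenThm-Cond}. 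Also note your citation of \cite[Section 9]{FengEtAl2022} for ``lower $\Phi$-dimensions'' is not apt, since that paper treats Hausdorff, packing and box dimensions but not $\Phi$-intermediate dimensions.
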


Inspired by \cite[Corollary 6.4]{BurrellEtAl2021} and \cite[Theorem 6.1]{Banaji2020}, we can deduce an interesting corollary from \autoref{thm:PhiDim} by proving the analogs of the corollaries in \cite[Section 6]{BurrellEtAl2021}.

\begin{corollary}\label{coro:ValProjBox}
	Let $ E \subset \euclid $ be a bounded set. Suppose there is a family of admissible functions $ \{\Psi_{s}\} $ such that $ \lDim{\Psi_{s}} E = s $ and $ \Psi_{s} $ satisfies \eqref{eq:GenThm-Cond} for $ s\in [\dimH E, \lDim{B} E] $. Then $ \lDim{B} \piV E = m $ for $ \gamma_{d,m} $-a.e.~$ V\in G(d, m) $ if and only if $ \dimH E \geq m $.
	
	A similar result holds for the upper dimensions replacing $ \lDim{\Psi} E $ and $ \lDim{B} E $ with $ \uDim{\Psi} E $ and $ \uDim{B} E $, respectively.
\end{corollary}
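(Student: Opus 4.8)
The plan is to reduce the assertion to a property of the generalized dimension profile at the exponent $m$. Throughout one may replace $E$ by $\overline{E}$, since $\dimH$, $\lDim{B}$, $\uDim{B}$ and the \tPhiDims\ are stable under closure and $\piV\overline{E}=\overline{\piV E}$; so assume $E$ compact, whence each $\piV E$ is compact. Taking the admissible function $\Phi(r)=-r/\log r$, which satisfies \eqref{eq:GenThm-Cond} and for which the \tPhiDims\ are exactly the box-counting dimensions (\cite[Proposition 3.2]{Banaji2020}), \autoref{thm:PhiDim}\ref{itm:MT-ortho} gives, for $\gamma_{d,m}$-a.e.\ $V$, that $\lDim{B}\piV E=\lDim{\Phi}\piV E=\lDim{\Phi}^{m} E$, while $\lDim{B}\piV E\le m$ trivially. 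Hence ``$\lDim{B}\piV E=m$ for a.e.\ $V$'' is equivalent to ``$\lDim{\Phi}^{m} E=m$''. Two auxiliary facts are needed. \textbf{(A)} For any admissible $\Phi$, any $0<\tau\le d$ and $E\subset\euclid$, $\lDim{\Phi}^{\tau} E\le\lDim{\Phi} E$: apply \autoref{lem:equil-meas} to the continuous positive kernel $\kerProf{r}{s}{\tau}{\cdot}$, bound its $\kerPsiPhi{r}{s}{\cdot}$-potential from above using $\kerPsiPhi{r}{s}{\Delta}\le\kerProf{r}{s}{\tau}{\Delta}$ (valid for $s\le\tau$), feed this into \autoref{lem:phiDim-CoverSumLB} to obtain $\sumPhi{r}{E}\ge\capProf{r}{s}{\tau}{E}$, and conclude via \autoref{lem:char-PhiDim} and \autoref{def:DimProfile}. \textbf{(B)} The analog, for the \tPhiDims, of a fact behind \cite[Section~6]{BurrellEtAl2021}: if $F\subset\euclid[m]$ is bounded with $\lDim{B} F=m$, then $\lDim{\Phi} F=m$ for every admissible $\Phi$ (and likewise with $\uDim{B}$, $\uDim{\Phi}$).

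For the ``if'' direction, suppose $\dimH E\ge m$. As $E$ is compact, hence Borel, the Marstrand--Mattila projection theorem (\cite[Chapter~9]{Mattila1995}) yields $\dimH\piV E=\min\{m,\dimH E\}=m$ for $\gamma_{d,m}$-a.e.\ $V$, and therefore $m=\dimH\piV E\le\lDim{B}\piV E\le m$ for a.e.\ $V$. The hypothesis on $\{\Psi_{s}\}$ plays no role here.

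For the ``only if'' direction, suppose $\lDim{B}\piV E=m$ for a.e.\ $V$, and assume for contradiction $\dimH E<m$. If $\lDim{B} E<m$ then $\lDim{B}\piV E\le\lDim{B} E<m$ (as $\piV$ is $1$-Lipschitz), contradicting the hypothesis; so $\lDim{B} E\ge m$. Fix $s$ with $\dimH E<s<m$; then $s\in[\dimH E,\lDim{B} E]$, so the hypothesis provides an admissible $\Psi_{s}$ satisfying \eqref{eq:GenThm-Cond} with $\lDim{\Psi_{s}} E=s$. By \autoref{thm:PhiDim}\ref{itm:MT-ortho} (with $\Phi=\Psi_{s}$), $\lDim{\Psi_{s}}\piV E=\lDim{\Psi_{s}}^{m} E$ for a.e.\ $V$; by \textbf{(B)} applied to $F=\piV E$ (which has $\lDim{B} F=m$ for a.e.\ $V$), $\lDim{\Psi_{s}}\piV E=m$ for a.e.\ $V$. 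Hence $\lDim{\Psi_{s}}^{m} E=m$, while \textbf{(A)} gives $\lDim{\Psi_{s}}^{m} E\le\lDim{\Psi_{s}} E=s<m$, a contradiction. So $\dimH E\ge m$. The upper-dimension statement is proved by the same scheme: the ``if'' direction is identical (it uses only $\dimH\piV E=m$), and the ``only if'' direction uses the upper analogs of \textbf{(A)} and \textbf{(B)}, the hypothesis $\uDim{\Psi_{s}} E=s$ for $s\in[\dimH E,\uDim{B} E]$, and the trivial case $\uDim{B} E<m$.

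The main obstacle is \textbf{(B)}: one must show that a bounded $F\subset\euclid[m]$ whose box-counting dimension already equals the ambient dimension $m$ cannot be covered more economically once the covering sets are constrained to have diameters in $[\Phi(r),r]$. Since $\lDim{B} F=m$ forces $N_{r}(F)=r^{-m+o(1)}$ at every small scale, the goal is to show that for $0\le s<m$ every cover $\{U_{i}\}$ of $F$ with $\Phi(r)\le\abs{U_{i}}\le r$ satisfies $\sum_{i}\abs{U_{i}}^{s}\gtrsim r^{s-m+o(1)}\to\infty$, so that $\lDim{\Phi} F\ge m$ by \autoref{lem:char-PhiDim}. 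This is extracted by grouping the $U_{i}$ into dyadic diameter ranges and observing that the covering requirement at the smallest admitted scale $\Phi(r)$ is already binding, paralleling the arguments in \cite[Section~6]{BurrellEtAl2021}. Once \textbf{(B)} (and its upper analog) is in place, the remaining steps are routine given \autoref{thm:PhiDim}.
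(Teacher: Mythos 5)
The overall strategy — reducing via \autoref{thm:PhiDim}\ref{itm:MT-ortho}, using $\Phi(r)=-r/\log r$ for box dimension, proving an ambient-dimension fact (B), and a profile-domination fact (A) — is in the same spirit as what the paper points to in \cite[Section~6]{BurrellEtAl2021}. Your fact (A) is fine: $\kerPsiPhi{r}{s}{\Delta}\le\kerProf{r}{s}{\tau}{\Delta}$ for $s\le\tau$, and feeding an equilibrium measure for $\kerProf{r}{s}{\tau}{\cdot}$ into \autoref{lem:phiDim-CoverSumLB} does give $\sumPhi{r}{E}\ge\capProf{r}{s}{\tau}{E}$, hence $\lDim{\Phi}^{\tau}E\le\lDim{\Phi}E$. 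The ``if'' direction via Marstrand--Mattila is fine too.

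The genuine gap is (B). Your sketch (``grouping the $U_i$ into dyadic diameter ranges; the covering requirement at scale $\Phi(r)$ is binding'') yields, after pushing it through, exactly
\[
	\sumPhi{r}{F}\;\gtrsim_{m}\;\frac{\Phi(r)^{m}\,N_{\Phi(r)}(F)}{r^{m-s}}\;=\;\frac{\Phi(r)^{\eta(\Phi(r))}}{r^{m-s}},
\]
where $\eta(\delta):=m-\frac{\log N_\delta(F)}{-\log\delta}\to 0$ encodes $\lDim{B}F=m$. For this right-hand side to stay bounded away from zero you need
$\eta(\Phi(r))\,\abs{\log\Phi(r)}\lesssim(m-s)\abs{\log r}$, i.e.\ a quantitative relation between the (uncontrolled) rate $\eta(\delta)\to 0$ and the ratio $\abs{\log\Phi(r)}/\abs{\log r}$. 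This goes through when $\liminf_{r\to 0}\log r/\log\Phi(r)>0$ (so the ratio is bounded, covering $\Phi(r)=r^{1/\theta}$, which is essentially the case treated in \cite{BurrellEtAl2021}), but that condition is strictly stronger than \eqref{eq:GenThm-Cond}, which is all the corollary assumes of $\Psi_s$. For instance $\Phi(r)=r^{-\log r}$ satisfies \eqref{eq:GenThm-Cond} yet has $\abs{\log\Phi(r)}/\abs{\log r}=\abs{\log r}\to\infty$, and then the dyadic bound tends to $0$ whenever $\eta(\delta)$ decays slower than $\abs{\log\delta}^{-1/2}$, say. You cannot fix this by noting a monotonicity of profiles/capacities in $\Phi$: decreasing $\Phi$ enlarges the kernel and hence \emph{decreases} the capacity and the dimension, which is the wrong direction. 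So (B), as stated ``for every admissible $\Phi$'', is not established, and it is not clear it even holds under \eqref{eq:GenThm-Cond} alone; yet your ``only if'' direction relies on applying (B) with $\Phi=\Psi_s$. Either (B) needs a genuinely different argument, or the hypothesis on $\Psi_s$ must be strengthened (e.g.\ $\liminf_{r\to0}\log r/\log\Psi_s(r)>0$), or the ``only if'' direction must be rerouted so as not to pass from $\lDim{\Psi_s}\piV E<m$ to $\lDim{B}\piV E<m$ via (B).
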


\noindent\textbf{Acknowledgements.} The author is grateful to De-Jun Feng for the insightful discussions and his valuable comments. Additionally, the author would like to thank Jian-Ci Xiao and Yu-Feng Wu for their helpful suggestions on the preparation of this article. 

\label{sec:ref}

\end{document}